\theoremstyle{plain}
\newtheorem{thm}{Theorem}[section]
\newtheorem{lem}[thm]{Lemma}
\newtheorem{prop}[thm]{Proposition}
\theoremstyle{definition}
\newtheorem{defn}[thm]{Definition}
\theoremstyle{remark}
\newtheorem{rem}[thm]{Remark}
\newtheorem*{Rem}{Remark}
\newtheorem*{thank}{{\bf Acknowledgments}}
\newcommand{\nc}{\newcommand}
\def\makeop#1{\expandafter\def\csname#1\endcsname
  {\mathop{\rm #1}\nolimits}\ignorespaces}
\def\makebb#1{\expandafter\def
  \csname bb#1\endcsname{{\mathbb{#1}}}\ignorespaces}
\def\makebf#1{\expandafter\def\csname bf#1\endcsname{{\bf
      #1}}\ignorespaces}
\def\makegr#1{\expandafter\def
  \csname gr#1\endcsname{{\mathfrak{#1}}}\ignorespaces}
\def\makescr#1{\expandafter\def
  \csname scr#1\endcsname{{\EuScript{#1}}}\ignorespaces}
\def\makecal#1{\expandafter\def\csname cal#1\endcsname{{\mathcal
      #1}}\ignorespaces}
\def\doLetters#1{#1A #1B #1C #1D #1E #1F #1G #1H #1I #1J #1K #1L #1M
                 #1N #1O #1P #1Q #1R #1S #1T #1U #1V #1W #1X #1Y #1Z}
\def\doletters#1{#1a #1b #1c #1d #1e #1f #1g #1h #1i #1j #1k #1l #1m
                 #1n #1o #1p #1q #1r #1s #1t #1u #1v #1w #1x #1y #1z}
     \def\qed{\qedmark\medbreak}%
\def\qedmark{{\enspace\vrule height 6pt width 5pt depth 1.5pt}}%
\def\Fq{{\bbF}_q}
\newcommand{\Z}{\mathbb Z}
\newcommand{\F}{\mathbb F}
\newcommand{\isoto}{\stackrel{\sim}{\longrightarrow}}
\nc{\embed}{\hookrightarrow}
\nc{\ol}{\overline}
\nc{\wt}{\widetilde}
\nc{\opp}{\mathrm{opp}}
\def\ul{\underline}
\def\Mat{{\rm Mat}}
\numberwithin{equation}{section}
\begin{document}

\title[Class numbers of definite central simple algebras]{Class numbers of
  central simple algebras over global function fields}
\author[Wei]{Fu-Tsun Wei}
\address{(Wei) Department of Mathematics, National Tsing-Hua
  University, Hsinchu 30013, Taiwan}
\email{ftwei@mx.nthu.edu.tw}
\author[Yu]{Chia-Fu Yu}
\address{
(Yu) Institute of Mathematics, Academia Sinica and NCTS (Taipei
Office) \\
6th Floor, Astronomy Mathematics Building\\ 
No. 1, Roosevelt Rd. Sec. 4 \\
Taipei 10617, Taiwan}
\email{chiafu@math.sinica.edu.tw}

\date{\today}
\subjclass[2000]{11R29, 11R52,11R58, 16A18}
\keywords{class number formulas, global function fields, 
definite central simple algebras, hereditary orders} 

\begin{abstract}
Let $K$ be a global function field
together with a place $\infty$, and $A$ the subring of functions
regular outside $\infty$. In this paper we present an effective method to evaluate the (locally free)
class number of an arbitrary hereditary $A$-order 
in an arbitrary definite central
simple $K$-algebra.   
We also show that the class number of 
any non-principal genus for a hereditary order in $D$ 
can be reduced to that of the principal genus for another hereditary 
order in $D$.
\end{abstract}

\maketitle

\section{Introduction}\label{sec:01}
Let $K$ be a global function field with finite constant field $\Fq$
together 
with a place $\infty$ (the place at infinity), 
and $A$ the subring of functions regular
outside $\infty$.
Let $D$ be a definite (with respect to $\infty$) central simple
algebra over $K$ of degree $n$, that is,
$D\otimes_K K_{\infty}$ is a division algebra with $\dim_K D = n^2$,
where $K_{\infty}$ is the completion of $K$ at the place $\infty$.
Let $R$ be a hereditary $A$-order in $D$. Recall that an $A$-order in
$D$ is said to be {\it hereditary} if any
left (or right) ideal of $R$ is a projective $R$-module. 
Denote by $h(R)$ the class number of $R$.  
By this we mean the number of equivalence classes of locally free right
(or left) ideals of $R$. Note that for the non-commutative rings, projective
modules may not be locally free.  
The computation of the class number $h(R)$ has been done 
in the following cases:

\begin{enumerate}
\item The algebra $D$ is quaternion, due to Eichler \cite{Eichler}.
\item The algebra $D$ is of Drinfeld type, 
  $A=\Fq[t]$ is
  a polynomial 
  ring and $R$ is a maximal $A$-order, due to Gekeler
  \cite{Ge3}.
\item The algebra $D$ is of prime index and $A=\Fq[t]$ is a polynomial
  ring, due to Denert and Van Geel \cite{D-G}.
\end{enumerate}

Recall that a central simple algebra $D$ over $K$ of degree $n$
is called {\it of Drinfeld type} if  $D$ is ramified
  precisely at
  the place $\infty$ and another finite place $v$
  with local invariants $-1/n$ and
  $1/n$, respectively. These precisely appear as endomorphism algebras
  of supersingular Drinfeld $A$-modules of rank $n$
  over an algebraic closure $\ol {\bbF}_v$ of the residue field
  $\bbF_v$ of $v$. 

The proof of Gekeler's class number formula is
  geometric. When $R$ is a maximal order, it is well-known
  (see \cite{drinfeld:1}, also cf.\ \cite{Ge2}, \cite{Y-Y})
  that there is a natural bijection between the set $\Cl (R)$ of
  equivalence classes of locally free right ideals of $R$ and the set
  $\Lambda(n,v)$ of
isomorphism
classes of supersingular Drinfeld $A$-modules of rank $n$
over $\overline{\mathbb{F}}_v$. Using this interpretation, Gekeler
calculated the number of isomorphism classes of supersingular Drinfeld $A$-modules in the
fine Drinfeld moduli spaces when $A=\Fq[t]$. Then he
established the {\it transfer principle} which relates the
supersingular Drinfeld $A$-modules with extra symmetries and
supersingular Drinfeld $A'$-modules
of low rank for certain integral extension $A'$ of $A$; see \cite{Ge3} 
for more details. This gives a way to express the class number $h(R)$
recursively.  In \cite[p.~333]{Ge3} Gekeler asked whether
or not the transfer principle holds for a larger class of definite
central simple algebras $D$ than those of Drinfeld type. 
We examine this in the case
where the degree $n$ of $D$ is a prime number. It turns out that the
(naive) transfer principle holds only for a very restricted class:
only for $D$ with exactly two ramified places,
$\infty$ and another finite place $v$ (but allowing different local
invariants at $\infty$ and the finite ramified place $v$).   


The proof of the class number formula for definite 
central division algebras of
prime degree by Denert and Van Geel \cite{D-G} 
is based on the generalization of Eichler's trace formula for 
Brandt matrices (see \cite{Eichler} and \cite[Formula (7), p. 392]{D-G}), 
and to compute explicitly the terms in 
the Eichler-Brandt trace formula in the case where 
$K$ is the rational function field.  
As far as the authors know, the trace formula of Brandt matrices 
is known only for definite central division algebras of prime degree 
(see \cite[Theorem 4.3]{brzezinski:auto_crelle1990} and 
\cite[Formula (7), p. 392]{D-G}),
and the explicit computation of the terms in the 
Eichler-Brandt trace formula is carried out by Denert and Van Geel 
only when $K$ is the rational function field 
(see \cite[Theorems 3 and 9]{D-G}).    

In this article we evaluate the class
number $h(R)$ for the general case, namely, for any global
function field $K$ with a place at infinity, any definite central
simple algebra $D$ and any hereditary
$A$-order $R$ in $D$. Our approach is to construct an algebraic
analogue of Gekeler's transfer principle for arbitrary definite
central simple algebras, 
which shares the same spirit of relating class numbers of algebras 
of larger degrees to those of lower degrees, but allowing 
more complicated 
relations among them than the original one 
(as we know, the naive generalization of Gekeler's transfer
principle only holds in a very restricted class). 
We now describe this transfer principle.

                 
First observe that the multiplicative group $R'^\times$ 
of any $A$-order $R'$ in $D$ is a finite cyclic group. More precisely, 
$R'^\times\simeq \F_{q^s}^\times$ for a finite field $\F_{q^s}$
contained in $D$, and hence $s|n$. 
Let $I_1,...,I_{h(R)}$ be representatives of locally free right ideal classes of
$R$, and let $R_i$ be the left order of $I_i$ for $1\leq i \leq
h(R)$. For computing the class number $h(R)$, we may
compute what we call the {\it weight-$s$ class number} 
$$h_s(D/K,R) = \#\{ i \mid 1\leq i \leq h(R), R_i^{\times} \cong
\mathbb{F}_{q^s}^{\times}\}$$ 
for every positive divisor $s$ of $n$. 
A basic result says that an $A$-order $R$ is hereditary if and only if
all its local completions $R_v$ are hereditary. 
Also, the hereditary order $R_v$ is determined 
by its {\it invariant} $\vec{f}_v=(f_{v,1},\dots, f_{v, r_v})$ up to
conjugation;  
see Section~\ref{sec:22}.  As a result, 
the class number $h_s(D/K,R)$ depends only 
on the {\it invariant} 
$\vec{\mathbf{f}} := (\vec{f}_v)_{v \neq
\infty}$ of 
$R$, so we also denote it by $h_s(D/K,\vec{\mathbf{f}})$.

Let $\F_D$ be a maximal finite subfield of $D$; the degree
$s_0:=[\F_D:\F_q]$ does not depend on the choice of $\F_D$, following
from the Noether-Skolem Theorem.
From the basic properties, one knows 
\[ h_s(D/K,R) = 0\quad \text{ if }\ s \nmid s_0. \]
Let $s$ be a positive divisor of $s_0$. Let 
$L_s:= K\cdot \F_{q^s}$ be the constant field extension of $K$ of
degree $s$, and let  $O_{L_s}$ denote the integral closure of $A$ in
$L_s$.  
Choose an embedding  $\iota: L_s\to D$ of $L_s$ into $D$,
and let $D'_s$ be the centralizer of its image $\iota(L_s)$ in $D$. 
We regard $L_s$ as a subfield of $D$ via the embedding $\iota$. Our 
main results (see Theorem~\ref{11}) do not depend on the choice of
the embedding $\iota$. 
It is easy to see that $D'_s$ is again a definite central simple algebra
over $L_s$ with respect to the unique place $\infty_s$ over
$\infty$, with degree $n/s$ and $[\F_{D_s'}: \F_{q^s}] = [\F_D:
\F_q]/s$.

Next we define a finite index set
$\Omega(D/K,s,\vec{\mathbf{f}})$. 
This set is actually the combinatorial
description of the product of all local optimal
embeddings, where we give a detailed analysis in
Section~\ref{sec:07}, which is the core of our explicit computation. 
Let $\Sigma_K^0$ (resp.~$\Sigma_{L_s}^0$) the set of all finite places
of $K$ (resp.~of $L_s$).
Let $v$ be a finite place of $K$. Let $D_v:=D\otimes K_v\simeq
\Mat_{m_v}(\Delta_v)$, where $\Delta_v$ is a central division algebra
over $K_v$ and $m_v$ is the {\it local capacity} at $v$ (see \cite{Rei}). 
Let $d_v$ be the degree of $\Delta_v$; one has $n=m_v
d_v$ for all $v$ and $d_v=1$ for almost all places $v$. Note that if 
$\vec{f}_v=(f_{v,1},\dots, f_{v, r_v})$ is the invariant of the local
hereditary order $R_v$, then $\sum_{i=1}^{r_v} f_{v,i}=m_v$.  
Put 
\[ \ell_{s,v} := \text{gcd}(s, \deg v),\quad  \text{and} \quad
t_{s,v} := \text{gcd}(s/\ell_{s,v}, d_v). \] 




Let $\Omega_v(D/K,s,\vec{f}_v)$ denote 
the set consisting of all tuples $(\vec{f}_{w,*})_{w|v}$ indexed by
places $w$ of $L_s$ over $v$, 
where each $\vec{f}_{w,*}=(f_{w,(i,j)})$ is an $r_v\times
t_{s,v}$-matrix with  
non-negative integer entries $f_{w,(i,j)}\in \Z_{\ge 0}$ 
($1\le i\le r_v$ and $1\le j \le t_{s,v}$), that satisfy
the following conditions:  
If one puts 
\begin{equation}
  \label{eq:11}
f_{w,i}:=\frac{s}{\ell_{s,v}
    t_{s,v}}\cdot \sum\limits_{j=1}\limits^{t_{s,v}} f_{w,(i,j)},  
\end{equation}
then  
\begin{equation}
  \label{eq:12}
  \sum\limits_{i=1}\limits^{r_v} f_{w,i} = \frac{m_v}{\ell_{s,v}}
\quad \forall\, w \mid v, \text{ and } 
\sum\limits_{w\mid v} f_{w,i} = f_{v,i}\quad 
\text{for } 1\leq i \leq r_v.
\end{equation}
Then we define the index set $\Omega (D/K, s,\vec{\mathbf{f}})$ by
\begin{equation}
  \label{eq:13}
  \Omega (D/K, s,\vec{\mathbf{f}}):=\prod_{v\in \Sigma_K^0}
  \Omega_v(D/K,s,\vec{f}_v). 
\end{equation}
It is not hard to see that the 
local component $\Omega_v(D/K,s,\vec{f}_v)$ is
singleton for almost all $v$ and every $\Omega_v(D/K,s,\vec{f}_v)$ is a
finite set, and hence that 
$\Omega (D/K, s,\vec{\mathbf{f}})$ is a finite set. 

We can write any element in $\Omega (D/K, s,\vec{\mathbf{f}})$ in the
form  
$\vec{\mathbf{f}}_* = (\vec{f}_{w,*})_{w
  \in \Sigma_{L_s}^0}$, 
where $\vec{f}_{w,*}=(f_{w,(i,j)})$ is an element in 
$\Z_{\ge 0}^{r_v}\times \Z_{\ge 0}^{t_{s,v}}$
with the conditions above.
We make an
appropriate order on the index set $\{(i,j)\}$ 
and regard $\vec{f}_{w,*}$ as a (long) vector 
in $\Z^{r_v\cdot t_{s,v}}_{\ge 0}$; see Section~\ref{sec:42} for
details. 
Denote by $\vec{f}_{w,*}^o$ the vector obtained by removing zero
entries of the vector $\vec{f}_{w,*}$ (also see Section~\ref{sec:42}) 
and define 
$\vec{\mathbf{f}}_*^o:= (\vec{f}_{w,*}^o)_{w \in \Sigma_{L_s}^0}$.
Note that the sum $\sum_{i,j} f_{w,(i,j)}$ is equal to the local
capacity of the central simple algebra $D'_s$ over $L_s$ at $w$ for all
finite places $w$. Therefore, there is a hereditary order 
$R'(\vec{\mathbf{f}}_*^o)$ in $D'_s$ with 
invariant $\vec{\mathbf{f}}_*^o$ and 
it makes sense to talk about the class
numbers  $h_{s''}(D_s'/L_s, \vec{\mathbf{f}}_*^o):=h_{s''}(D_s'/L_s,
R'(\vec{\mathbf{f}}_*^o))$ for positive divisors $s''$ of $[\F_{D_s'}: \F_{q^s}]$.
    

With notations being as above, our transfer principle is stated as
follows. 

\begin{thm}\label{11} 
For any two 
positive divisors $s$ and $s'$ of $[\F_D:\F_q]$ with 
$s \mid s'$, we have 
\begin{equation}
  \label{eq:14}
  s \cdot h_{s'}(D/K,R) = \sum_{\vec{\mathbf{f}}_* \in
  \Omega(D/K,s,\vec{\mathbf{f}})} h_{s'/s}(D_s'/L_s, 
R'(\vec{\mathbf{f}}_*^o)).
\end{equation}
\end{thm}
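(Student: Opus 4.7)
The plan is to reduce the identity \parref{eq:14} to an adelic double coset computation and then invoke the local analysis developed in Section~\ref{sec:07}. First, identify $\Cl(R)$ with $D^\times \backslash \hat{D}^\times / \hat{R}^\times$, where hats denote finite adelization. Each representative $x \in \hat{D}^\times$ determines the left order $R_x = D \cap x \hat{R} x^{-1}$, with $R_x^\times \cong \F_{q^{s_x}}^\times$ for a unique $s_x \mid s_0$. Since the cyclic group $R_x^\times$ contains a unique copy of $\F_{q^s}^\times$ whenever $s \mid s_x$, and since $O_{L_s}$ is the integral closure of $A$ in $L_s$, the set of $A$-algebra embeddings $\iota' \colon O_{L_s} \hookrightarrow R_x$ has cardinality $s$, indexed by $\Gal(\F_{q^s}/\F_q)$. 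Hence
\begin{equation*}
  s \cdot h_{s'}(D/K, R) = \#\bigl\{ ([x], \iota') : s_x = s',\ \iota' \colon O_{L_s} \hookrightarrow R_x \bigr\}.
\end{equation*}

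Next, fix the embedding $\iota \colon L_s \hookrightarrow D$ of the statement and let $D_s'$ denote its centralizer. Skolem--Noether provides, for each pair $([x], \iota')$ above, an element $\gamma \in D^\times$ with $\iota' = \gamma \iota \gamma^{-1}$, uniquely determined modulo $(D_s')^\times$. Replacing $x$ by $y = \gamma^{-1} x$ converts the pair into $([y], \iota)$ with $\iota(O_{L_s}) \subseteq R_y$, and the ambiguity in $\gamma$ turns the residual $D^\times$-action on $y$ into a $(D_s')^\times$-action. Consequently
\begin{equation*}
  s \cdot h_{s'}(D/K, R) = \#\bigl( (D_s')^\times \backslash Y_{s'} \bigr),
\end{equation*}
where $Y_{s'} := \{\, y \in \hat{D}^\times/\hat{R}^\times : \iota(O_{L_s}) \subseteq y\hat{R}y^{-1},\ R_y^\times \cong \F_{q^{s'}}^\times \,\}$.

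To evaluate this, stratify $Y_{s'}$ by the local types of the centralizer orders $R_y \cap D_s'$. At each finite place $v$ of $K$, the condition $\iota_v(O_{L_s,v}) \subseteq y_v R_v y_v^{-1}$ encodes, modulo $R_v^\times$-conjugation, an embedding of the unramified \'etale $K_v$-algebra $L_s \otimes_K K_v = \prod_{w \mid v} L_{s,w}$ into the hereditary order $R_v$ of type $\vec{f}_v$. The main result of Section~\ref{sec:07} asserts that such $R_v^\times$-orbits are parameterized by $\Omega_v(D/K, s, \vec{f}_v)$: the relations \parref{eq:11} and \parref{eq:12} arise from matching the local capacity $m_v$ of $D_v$ against the capacity $m_v/\ell_{s,v}$ of $D_{s,w}'$, and for each datum $\vec{f}_{w,*}$ the centralizer $R_v \cap D_{s,w}'$ is a hereditary order with invariant $\vec{f}_{w,*}^o$. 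Gluing over $v$, the global stratum indexed by $\vec{\mathbf{f}}_* \in \Omega(D/K, s, \vec{\mathbf{f}})$ is in bijection with $\Cl(R'(\vec{\mathbf{f}}_*^o))$ in $D_s'$ via the analogous adelic description for $D_s'$. Under this bijection, the condition $R_y^\times \cong \F_{q^{s'}}^\times$ over $K$ corresponds to the condition that the associated left order in $D_s'$ has unit group $\F_{(q^s)^{s'/s}}^\times$, i.e., to the weight-$(s'/s)$ condition over $L_s$. Summing over $\vec{\mathbf{f}}_*$ yields \parref{eq:14}.

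The principal obstacle is the local classification in Section~\ref{sec:07}. For each hereditary order $R_v$ of type $\vec{f}_v$ in $M_{m_v}(\Delta_v)$, one must classify the $R_v^\times$-orbits of embeddings of $\prod_{w \mid v} L_{s,w}$ and identify the centralizer orders as hereditary of the predicted shape. The $r_v \times t_{s,v}$ matrix structure of $\vec{f}_{w,*}$, together with the factor $s/(\ell_{s,v} t_{s,v})$ in \parref{eq:11}, both reflect the fact that the unramified extension $L_{s,w}/K_v$ of degree $s/\ell_{s,v}$ embeds into the local division algebra $\Delta_v$ only through its subfield of degree $t_{s,v} = \gcd(s/\ell_{s,v}, d_v)$, so the residual component of the embedding decomposes the idempotent chain of $R_v$ across the $r_v$ blocks and the $t_{s,v}$ residual components in a combinatorially delicate way.
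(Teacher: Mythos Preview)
Your approach differs from the paper's and is more direct. The paper (see the proof of Theorem~\ref{44}) first establishes the summed identity of Theorem~\ref{43}, which via \parref{eq:36} amounts to summing \parref{eq:14} over all $s'$ with $s\mid s'\mid s_0$, and then ``unsums'' by induction on the depth $\mu(D/K,s)$ (the number of prime factors of $s_0/s$ counted with multiplicity), exploiting the combinatorial decomposition
\[ \Omega(D/K,s', \vec{\mathbf{f}}) = \coprod_{\vec{\mathbf{g}}_* \in \Omega(D/K,s,\vec{\mathbf{f}})} \Omega(D_s'/L_s, s'/s, \vec{\mathbf{g}}_*^o). \]
You instead track the weight condition directly through the adelic bijection, avoiding the induction entirely.

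The crux of your argument, however, is the unproved assertion that ``the condition $R_y^\times \cong \F_{q^{s'}}^\times$ over $K$ corresponds to the condition that the associated left order in $D_s'$ has unit group $\F_{(q^s)^{s'/s}}^\times$.'' Concretely, if $y$ lies in the fibre over $\vec{\mathbf{f}}_*$ and $R'_c := D_s' \cap y\widehat{R}y^{-1}$ is the associated left order in $D_s'$, you need $(R'_c)^\times = R_y^\times$. The inclusion $(R'_c)^\times \subseteq R_y^\times$ is immediate. For the reverse, note that $\iota(\F_{q^s}^\times) \subseteq R_y^\times$ by construction, and since $D_\infty$ is a division algebra the commutative subring $\F_q[R_y^\times]\subset D$ is a finite \emph{field} containing $\iota(\F_{q^s})$; hence every element of $R_y^\times$ commutes with $\iota(\F_{q^s})$, and therefore with $\iota(L_s) = K\cdot\iota(\F_{q^s})$, giving $R_y^\times \subseteq D_s'$ and thus $R_y^\times \subseteq (R'_c)^\times$. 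With this short lemma in hand your direct route succeeds; the paper's inductive proof trades this observation about unit groups for a purely combinatorial manipulation of the index sets $\Omega$.
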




We make a few remarks about Theorem~\ref{11}.
When $D$ is of Drinfeld type, Theorem~\ref{11} recovers Gekeler's
transfer principle (cf. Theorem~\ref{51}). 
See Section~\ref{sec:51} for the explanation of Gekeler's transfer
principle for supersingular Drinfeld modules and 
the deduction from Theorem~\ref{11}. As a result, we give an
algebraic proof of Theorem~\ref{51}. 
Note that in this special case, the hereditary 
orders $R'(\vec{\mathbf{f}}_*^o)$ 
in $D'_s$ occurred in (\ref{eq:14}) are also maximal. This is not true 
for the general cases; one needs to deal with class numbers of
hereditary orders (in smaller subalgebras) as well 
even when one starts with a maximal order $R$ in $D$. 
This is also a main reason for us to consider directly the cases of arbitrary hereditary orders.     



Theorem~\ref{11} indicates that in order to get $h_s(D/K,R)$, it
suffices to compute $h_1(D_s'/L_s,R'(\vec{\mathbf{f}}_*^o))$ for each
$\vec{\mathbf{f}}_*$ in $\Omega(D/K,s,\vec{\mathbf{f}})$. 
Recall that the associated mass $\text{Mass}(D/K,R)$ is defined as
follows: 
$$\text{Mass}(D/K,R) := \sum_{1\leq i \leq h(R)}
\frac{1}{\#(R_i^{\times})} 
= \sum_{s \mid [\F_D:\F_q]} \frac{h_s(D/K,R)}{q^s-1},$$
where $R_i$ is the left order of each $R$-ideal $I_i$. The mass 
$\text{Mass}(D/K,R)$ only depends on the invariant $\vec{\bff}$ of
$R$, so we may also denote it by $\text{Mass}(D/K,\vec{\bff})$.
Notice that $\#(R_i^{\times}) = q^s-1$ if there exists an optimal
embedding of  $O_{L_s}$ into $R_i$. 
The mass formula (cf.\ \cite{D-G}, the precise formula is also stated
in Theorem~\ref{31}) shows that the mass $\text{Mass}(D/K,R)$ can be
computed 
explicitly in terms of integral values of the zeta function of $K$. 
Theorem~\ref{11} and the mass formula for 
$\text{Mass}(D_s'/L_s,R'(\vec{\mathbf{f}}_*^o))$ together provide enough
equations to solve each weight-$s$ class number $h_s(D/K,R)$.
As a result, the class number $h(R)$ can be also
expressed in terms of special zeta values eventually. 

The second part of main results is to make the computation of the class
number more effectively. Note that the index set $ \Omega (D/K,
s,\vec{\mathbf{f}})$ in (\ref{eq:14}) 
is the product of its local components $\Omega_v$
(\ref{eq:13}). However, the class number $h_{s'/s}(D_s'/L_s, 
R'(\vec{\mathbf{f}}_*^o))$ is not.
Therefore, there is no direct way to reduce the 
computations locally.  
The way we do is to regroup the relation (\ref{eq:14}) 
into the (partial) mass sums (see Theorem~\ref{45}):
\begin{equation}
  \label{eq:15}
  s \cdot \sum_{s': s \mid s' \mid [\F_D:\F_K]}
\frac{h_{s'}(D/K,\vec{\mathbf{f}})}{q^{s'}-1} 
 = \sum_{\vec{\mathbf{f}}_* \in \Omega(D/K,s,\vec{\mathbf{f}})}
\text{\rm Mass}(D_s'/L_s,\vec{\mathbf{f}}_*^o).
\end{equation}
Note that now one can compute the right side of the equation (\ref{eq:15})
directly using the mass formula, 
without going through the induction step of computing
$h_1(D_s'/L_s,R'(\vec{\mathbf{f}}_*^o))$ recursively. 
This simplifies the computation significantly. 

The second input is using the nice property
of the masses, which allows us to 
separate the global and local contributions.
Set 
\[ \text{Mass}(D'_s/L_s):=\text{Mass}(D'_s/L_s,
\vec{\mathbf{f}}'_{\text{max}}),\]
where
$\vec{\mathbf{f}}'_{\text{max}}$ is the invariant of a maximal
$O_{L_s}$-order $R'_{\text{max}}$ in $D'_s$. For any finite place $w$ of
$L_s$ and any vector $\vec{h}_w = (h_{w,1},...,h_{w,r_w}) \in
\mathbb{Z}_{\geq 0}^{r_w}$, we set 
\begin{equation}
  \label{eq:16}
  \mathcal{T'}_w(D'_s/L_s,\vec{h}_w):=
\frac{\prod\limits_{i=1}\limits^{m_w} \big( N(w)^{d_w
    i}-1\big)}{\prod\limits_{i=1}\limits^{r_w}
  \left(\prod\limits_{j=1}\limits^{h_{v,i}}\big(N(w)^{d_w
      j}-1\big)\right)},
\end{equation}
where $m_w$ is the local capacity of $D'_s$ at $w$, $d_w$ is the local
index of $D'_s$ at $w$ and $N(w)$ is the cardinality of the residue
field of $L_s$ at $w$. 
Then the mass formula (Theorem~\ref{31}) gives   
$$\text{Mass}(D'_s/L_s,\vec{\mathbf{f}}_*^o)) = \text{Mass}(D'_s/L_s) 
  \cdot \prod_{w  \in \Sigma_{L_s}^0} 
\mathcal{T'}_w(D'_s/L_s,\vec{f}_{w,*}).$$
 
For each finite place $v$ of $K$, define
\begin{equation}
  \label{eq:17}
  \Theta_v (D/K,s,\vec{f}_v):= \sum_{(\vec{f}_{w,*})_{w \mid v} \in
  \Omega_v(D/K,s,\vec{f}_v)} \left(\prod_{w \mid v}
  \mathcal{T'}_w(D_s'/L_s,\vec{f}_{w,*})\right).
\end{equation}

Our second main result (Theorem~\ref{46}) 
states as follows, which reduces the
computation in purely local terms. 

\begin{thm}\label{12}
Notations being as above, one has
\begin{equation}
  \label{eq:18}
  s \cdot \sum_{s': s \mid s' \mid [\F_D:\F_K]}
\frac{h_{s'}(D/K,\vec{\mathbf{f}})}{q^{s'}-1} 
= \text{\rm Mass}(D_s'/L_s) \cdot \prod_{v \in \Sigma_K^0}
\Theta_v(D/K,s,\vec{f}_v).
\end{equation}
\end{thm}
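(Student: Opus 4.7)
The plan is to derive Theorem~\ref{12} directly from Theorem~\ref{11} by summing both sides in the correct way and then invoking the factorized mass formula. The key observation is that the combination $h_{s'}/(q^{s'}-1)$ summed over $s'$ is exactly what assembles into a mass, while the product index set $\Omega(D/K,s,\vec{\mathbf{f}})=\prod_v\Omega_v$ produces the local factor $\Theta_v$ once one interchanges the sum over $\vec{\mathbf{f}}_*$ with the product over places.

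First I would rewrite the left-hand side of (\ref{eq:18}). For each $s'$ with $s\mid s'\mid [\F_D:\F_q]$, set $s''=s'/s$, so that $s'$ runs over multiples of $s$ in the divisor set exactly when $s''$ runs over positive divisors of $[\F_D:\F_q]/s=[\F_{D_s'}:\F_{q^s}]$, and $q^{s'}-1=(q^s)^{s''}-1$. Substituting $s\cdot h_{s'}(D/K,\vec{\mathbf{f}})$ from Theorem~\ref{11}, the LHS becomes
\begin{equation*}
\sum_{s''\mid [\F_{D_s'}:\F_{q^s}]}\ \sum_{\vec{\mathbf{f}}_*\in\Omega(D/K,s,\vec{\mathbf{f}})}\frac{h_{s''}(D_s'/L_s,R'(\vec{\mathbf{f}}_*^o))}{(q^s)^{s''}-1}.
\end{equation*}
Next I would swap the order of summation. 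For each fixed $\vec{\mathbf{f}}_*$, the inner sum over $s''$ is, by the very definition of the mass recalled in the introduction (applied to $D_s'/L_s$ in place of $D/K$, with $q$ replaced by $q^s$), exactly $\mathrm{Mass}(D_s'/L_s,\vec{\mathbf{f}}_*^o)$. (One must verify that the divisors $s''$ of $[\F_{D_s'}:\F_{q^s}]$ are precisely the indices for which $h_{s''}(D_s'/L_s,\cdot)$ can be nonzero, but this is exactly the vanishing statement $h_{s''}=0$ for $s''\nmid [\F_{D'_s}:\F_{q^s}]$ noted in the introduction.) Thus the LHS equals $\sum_{\vec{\mathbf{f}}_*\in\Omega(D/K,s,\vec{\mathbf{f}})}\mathrm{Mass}(D_s'/L_s,\vec{\mathbf{f}}_*^o)$.

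Now I would apply the product form of the mass formula stated just before (\ref{eq:17}):
\begin{equation*}
\mathrm{Mass}(D_s'/L_s,\vec{\mathbf{f}}_*^o)=\mathrm{Mass}(D_s'/L_s)\cdot\prod_{w\in\Sigma_{L_s}^0}\mathcal{T}'_w(D_s'/L_s,\vec{f}_{w,*}).
\end{equation*}
Pulling the global constant $\mathrm{Mass}(D_s'/L_s)$ outside, the LHS becomes $\mathrm{Mass}(D_s'/L_s)\cdot\sum_{\vec{\mathbf{f}}_*\in\Omega}\prod_w\mathcal{T}'_w$. Finally, using the product structure (\ref{eq:13}), $\Omega(D/K,s,\vec{\mathbf{f}})=\prod_{v\in\Sigma_K^0}\Omega_v(D/K,s,\vec{f}_v)$, the sum of products factors as a product of sums over each local index set, and each factor is by definition $\Theta_v(D/K,s,\vec{f}_v)$ from (\ref{eq:17}). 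This yields the desired identity.

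The manipulation is essentially formal once Theorem~\ref{11} and the mass factorization are in hand; the only substantive point that needs care is the bookkeeping of divisibility conditions when exchanging the $s'$-sum with the sum over $\vec{\mathbf{f}}_*$ and recognizing the resulting inner sum as a mass for $(D_s'/L_s,R'(\vec{\mathbf{f}}_*^o))$, taken with respect to the constant field $\F_{q^s}$ of $L_s$ rather than $\F_q$. Beyond this indexing check, no further geometric or arithmetic input is required; in particular one does not need to re-derive the optimal-embedding count that underlies the definition of $\Omega_v$.
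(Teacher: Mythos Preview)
Your proof is correct and follows essentially the same route as the paper: it derives the intermediate identity (Theorem~\ref{45}) by dividing the transfer principle (\ref{eq:14}) by $q^{s'}-1$, summing over $s'$, swapping sums, and recognizing the inner sum as $\mathrm{Mass}(D_s'/L_s,\vec{\mathbf{f}}_*^o)$; then it obtains Theorem~\ref{46} by factoring the mass via Theorem~\ref{31} and using the product structure $\Omega=\prod_v\Omega_v$ to collapse the sum into $\prod_v\Theta_v$. The only difference is cosmetic---you spell out the reindexing $s'=s\cdot s''$ and the factorization over places more explicitly than the paper does.
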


We also provide a simple method to compute the local terms 
$\Theta_v(D/K,s,\vec{f}_v)$ using generating functions (see
Proposition~\ref{48}). In Section~\ref{sec4.1} we present a recursive
formula for computing the class number $h(R)$ using Theorems~\ref{11}
and \ref{12} together with the explicit computation of the local terms
$\Theta_v(D/K,s,\vec{f}_v)$. 

We already mentioned that the index set 
$\Omega(D/K,s,\vec{\mathbf{f}})$ is the combinatorial
description of the product of  all local optimal
embeddings of $L_s$ in $R_i$, following from the results in
Section~\ref{sec:07}. 
Therefore, this gives a combinatorial criterion of 
determining the existence of optimal embeddings. 
As a by-product, we obtain the
following generalization (Theorem~\ref{435}) 
of Eichler's theorem \cite{Eichler} 
on optimal embeddings from quaternion algebras to central simple
algebras, which is of interest in its own right. 

\begin{thm}\label{13}
  Notations being as above. There is an optimal embedding of
  $O_{L_s}$ into a hereditary order $R'$ in $D$ of invariant
  $\vec{\bff}$ if and only if for all $v\in \Sigma_K^0$, one has the
  divisibility 
\begin{equation}
  \label{eq:19}
  \frac{s}{\ell_{s,v} t_{s,v}}\, {\Big |}\, f_{v,i}
\end{equation}
for $1\leq i \leq r_v$.
\end{thm}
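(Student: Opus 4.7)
The plan is to reduce the question to a purely local one at each finite place of $K$, then invoke the combinatorial description of local optimal embeddings established in Section~\ref{sec:07}, and finally verify that the non-emptiness of $\Omega_v(D/K,s,\vec{f}_v)$ is governed precisely by the divisibility (\ref{eq:19}).

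First I would appeal to a local-global principle for optimal embeddings: an embedding $\iota\colon O_{L_s}\hookrightarrow R'$ is optimal, i.e.\ satisfies $\iota(L_s)\cap R' = \iota(O_{L_s})$, if and only if for every finite place $v$ of $K$ the induced embedding $\iota_v\colon O_{L_s}\otimes_A A_v\hookrightarrow R'_v$ is optimal. Since the conjugacy class of the local hereditary order $R'_v$ is determined by its invariant $\vec{f}_v$, and since any compatible family of local embeddings can be assembled into a global one by replacing $R'$ within its genus (which is admissible because we are allowed to choose \emph{any} hereditary order $R'$ of invariant $\vec{\bff}$), the existence of an optimal embedding globally is equivalent to the existence of an optimal embedding of $(O_{L_s})_v$ into some hereditary order in $D_v$ of invariant $\vec{f}_v$ at every $v\in\Sigma_K^0$.

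Next I would apply the core analysis of Section~\ref{sec:07}, which parameterizes the local optimal embeddings of $(O_{L_s})_v:=O_{L_s}\otimes_A A_v$ into a local hereditary order of invariant $\vec{f}_v$ by the tuples $(\vec{f}_{w,*})_{w\mid v}$ satisfying (\ref{eq:11}) and (\ref{eq:12}). Thus $\Omega_v(D/K,s,\vec{f}_v)$ is non-empty precisely when such a local optimal embedding exists, and the theorem reduces to the combinatorial statement that the system (\ref{eq:11})--(\ref{eq:12}) admits a non-negative integer solution $(f_{w,(i,j)})$ if and only if $\tfrac{s}{\ell_{s,v}t_{s,v}}\mid f_{v,i}$ for all $1\le i\le r_v$. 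Necessity is immediate: (\ref{eq:11}) forces each $f_{w,i}$ to be a non-negative integer multiple of $\tfrac{s}{\ell_{s,v}t_{s,v}}$, and summing over $w\mid v$ via (\ref{eq:12}) yields the divisibility. For sufficiency, write $f_{v,i}=\tfrac{s}{\ell_{s,v}t_{s,v}}\,c_i$ and seek non-negative integers $(c_{w,i})_{w\mid v,\,1\le i\le r_v}$ with prescribed column sums $c_i$ and row sums $\tfrac{m_v t_{s,v}}{s}$; existence follows from a standard transportation/flow argument using the global balance $\sum_i c_i=\ell_{s,v}\cdot\tfrac{m_v t_{s,v}}{s}$, after which one sets $f_{w,(i,1)}:=c_{w,i}$ and $f_{w,(i,j)}=0$ for $j\ge 2$.

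The principal obstacle is the second step, namely verifying that $\Omega_v$ genuinely parameterizes optimal local embeddings. This requires a detailed analysis of how $(O_{L_s})_v$, which is a product of unramified extensions indexed by $w\mid v$, can sit optimally inside a hereditary order in $\Mat_{m_v}(\Delta_v)$, taking into account the interplay between residue degrees, the local index $d_v$, and the parts $f_{v,i}$ of $\vec{f}_v$. This is precisely the technical heart of Section~\ref{sec:07}; once it is in place, the remaining steps are essentially bookkeeping with the divisibility conditions.
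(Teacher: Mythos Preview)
Your proposal is correct and follows essentially the same route as the paper. The paper's proof of this statement (restated as Theorem~\ref{435}) simply cites the bijection~(\ref{eq:46}) and Lemma~\ref{41}: the former, built from the adelic decomposition~(\ref{eq:310}) together with the local results of Section~\ref{sec:07} (Lemma~\ref{72}, Propositions~\ref{75} and~\ref{76}, Theorem~\ref{710}), is exactly your local-global reduction plus the parametrization of local optimal embeddings by $\Omega_v$; the latter is exactly your combinatorial argument, with your transportation step corresponding to the paper's Lemma~4.2.
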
   


In \cite{Pa1} Papikian establishes a bijection between the
isomorphism classes of exceptional $\mathcal{D}$-elliptic sheaves with
the ideal classes of a certain hereditary order. Also, he
identified the isomorphism classes of supersingular
$\mathcal{D}$-elliptic sheaves with the ideal classes of a
maximal order.
One application of our main results evaluates the number of 
isomorphism classes of those $\mathcal{D}$-elliptic sheaves.

Another application of our results is to 
compute the dimension of certain automorphic
forms for definite central simple algebras. Indeed, the class number
$h(R)$ 
is simply the dimension of the space $L^2(D^\times \backslash
\widehat{D}^\times / \widehat{R}^\times)$ of automorphic forms. Here
$\widehat{D}:= D\otimes_K \mathbb{A}_K^{\infty}$ and $\widehat{R}:= R
\otimes_A \widehat{A}$, where $\mathbb{A}_K^{\infty}$ is the ring of
finite adeles of $K$ and $\widehat{A}$ is the pro-finite completion of
$A$. \\ 



This paper is organized as follows.
In Section \ref{sec:02}, we include the preliminaries for central simple
algebras over global function fields and local properties of
hereditary orders. We also compute the degree $[\F_D:\F_q]$ of a
maximal finite subfield $\F_D$ in $D$. 
In Section~\ref{sec:03} we explain the strategy of
computing the class number from the mass formula and optimal
embeddings. 
An explicit mass formula for hereditary $A$-orders is described in
Section~\ref{sec:31}. In Section~\ref{sec:32} 
we translate the problem of computing 
class numbers into that for numbers 
of (global) optimal embeddings.
We also note at the end of Section \ref{sec:03} that global
optimal embeddings can be understood through the study of local optimal
embeddings. The main theorems are presented in Section~\ref{sec:04},
and we give 
a recursive formula for computing the class number in
Section~\ref{sec4.1}. 
Section~\ref{sec:05} includes special cases and examples to show the
relation between the main theorems and previous results in \cite{D-G}
and \cite{Ge2}. 
In Section~\ref{sec:51}, we recall Gekeler's transfer principle for
supersingular Drinfeld modules and deduce Gekeler's result from
Theorem~\ref{11}.  
In Section~\ref{sec5.2}, we focus on the special case where $[\F_D:\F_q]$ is a
prime, and express the class number $h(R)$ in terms of masses. 
Moreover, when the degree $n$ of $D$ over $K$ is a prime number, 
we obtain an explicit class number formula 
which coincides with the formulas in \cite[Theorems 3 and 9]{D-G} in
the case where $K$ is the rational function field. 
We also give an example for the
reader's interest using the recursive formula of Section~\ref{sec4.1}.
In Section~\ref{sec:06} we show that  
the computation of the class number of 
other genus of (not necessarily locally free) 
$R$-ideals can be reduced to that of {\it locally free}
$R'$-ideals for another hereditary $A$-order $R'$, the principal genus
class number. 
Section~\ref{sec:07} is an independent section, in which we make a
detailed study of local optimal embeddings. This also plays the key
ingredient in the proof of our main theorem Theorem~\ref{11}.
 

\section{Preliminaries}\label{sec:02}

In this section, we set up general notations and establish basic
results of definite central simple algebras
of arbitrary degree which we shall need in this article.
Further details are referred to \cite{Rei}.

\subsection{General settings}\label{sec:21}

Let $K$ be a global function field with finite constant field
$\mathbb{F}_q$, i.e.\
the transcendental degree of $K$ over $\mathbb{F}_q$ is one and
$\mathbb{F}_q$ is algebraically closed in $K$.
For each place $v$ of $K$, the completion of $K$ at $v$ is denoted by
$K_v$,
and we set $O_v$ to be the valuation ring of $K_v$.
Fix a uniformizer $\pi_v$ in $O_v$.
The residue field $O_v/(\pi_v)$ is denoted by $\mathbb{F}_v$, and
$\deg v$ is the degree of $\mathbb{F}_v$ over $\mathbb{F}_{q}$.
There is a canonical embedding $\mathbb{F}_v \hookrightarrow O_v$, and
$O_v$ is in fact isomorphic to the power series ring 
$\mathbb{F}_v[[\pi_v ]]$.
The cardinality of $\mathbb{F}_v$ is denoted by $N(v)$. 

Let $D$ be a central simple algebra over $K$ with 
$\text{dim}_K D = n^2$. 
For each place $v$ of $K$,
we have that $D_v:= D \otimes_K K_v$ is isomorphic to
$\text{Mat}_{m_v}(\Delta_v)$, where $\Delta_v$ is a central division
algebra over $K_v$ with  $\text{dim}_{K_v} \Delta_v = d_v^2$ and $m_v
d_v = n$.
We recall the definition of local invariants of $D$.
Note that $\Delta_v$ contains an unramified
maximal subfield $F_v$ 
(so $[F_v:K_v]=d_v$),
and there exists an element $u_v$ in $\Delta_v$ such that 
$u_v^{d_v} = \pi_v^{\kappa_v}$ where $\kappa_v\in \Z$ with
$\text{gcd}(\kappa_v, d_v) = 1$, and
$$ u_v \alpha_v = \text{Fr}_v(\alpha_v) u_v \text{ for all } \alpha_v
\text{ in } F_v.$$
Here $\text{Fr}_v$ is the {\it Frobenius automorphism of $F_v$ over
$K_v$}, i.e.\
for any $\alpha_v$ in the valuation ring $O_{F_v}$ of $F_v$ we have
$\text{Fr}_v(\alpha_v) \equiv \alpha_v^{N(v)} \pmod {\pi_v O_{F_v}}$.
The {\it local invariant $\text{\rm inv}_v(D)$ of $D$ at $v$} is
defined as
$$ \inv_v(D):=\kappa_v/d_v \text{ } \bmod  \mathbb{Z}
\in \mathbb{Q}/\mathbb{Z},$$
which is independent of the choices of $F_v$, $\pi_v$, and $u_v$.
We call $D$ \it ramified at $v$ \rm if
$\text{inv}_v(D) \not\equiv 0 \text{ } \pmod \Z$, i.e.\
$\Delta_v$ is not equal to $K_v$.
Let $S=S_D$ be the set consisting of places of $K$ where $D$ is
ramified.
It is well-known that $S$ is finite, and that
$$ \sum_{v\in V^K} \text{inv}_v(D) \equiv 0 \text{ }
\pmod \Z,$$
where $\Sigma_K$ denotes the set of all places of $K$.

\subsection{Hereditary orders}\label{sec:22}

Fix a place $\infty$ of $K$, referred as the place at infinity;
and others are referred as finite places of $K$.
Let $A$ be the ring of functions in $K$ regular outside $\infty$.
Recall that a \it hereditary $A$-order $R$ in
$D$ \rm is an $A$-order in $D$ such
that every left (or equivalently right) ideal of $R$ is projective 
as an $R$-module. 
It is known that an $A$-order $R$ in $D$ is hereditary if
and only if its completion $R_v:=R\otimes_A O_v$ is hereditary for all
finite places $v$ of $K$.  

Let $R$ be a hereditary $A$-order in $D$.
For each finite place $v$ of $K$. The completion
$R_v := R \otimes_A O_v$
is a hereditary $O_v$-order in $D_v \cong \text{Mat}_{m_v}(\Delta_v)$.
The unique maximal $O_v$-order in $\Delta_v$ is denoted by
$O_{\Delta_v}$ and 
we set $\mathfrak{P}_v$ to be its maximal (two-sided) ideal.
It is known that there exists a vector $\vec{f}_v =
(f_{v,1},...,f_{v,r_v})$, where $f_{v,1},...,f_{v,r_v}$ are positive
integers such that $\sum_{i=1}^{r_v}f_{v,i} = m_v$,
and $R_v$ is isomorphic to the ring
$\text{Mat}_{m_v}(\vec{f}_v,O_{\Delta_v})$
consisting of elements $X=(X_{i,j})_{1\leq i, j \leq r_v}$ in
$\text{Mat}_{m_v}(O_{\Delta_v})$ such that
$$X_{i,j} \in \begin{cases}
\text{Mat}_{f_{v,i} \times f_{v,j}} (O_{\Delta_v}) & \text{ if $i \leq
  j$,} \\
\text{Mat}_{f_{v,i} \times f_{v,j}} (\mathfrak{P}_v) & \text{ if $i >
  j$.} \end{cases}$$
The number $r_v$ is called the {\it period} of $R_v$;  
the vector $\vec{f}_v:= (f_{v,1},...,f_{v,r_v})$ is called the \it
invariant of $R$ at $v$, \rm which is uniquely determined by $R_v$ 
up to cyclic permutations. When $\bff_v=(1,\dots,1)$, the order $R_v$
is the Iwahori order which is the preimage of the set of upper
triangular 
matrices over $O_{\Delta_v}/\grP_v$.
We shall also call the collection $\bff=(\bff_v)_{v\neq \infty}$ the
{\it invariant} of $R$. 

Note that the class number $h(R)$ only depends on the invariant $\bff$
of $R$ but not on $R$ itself. 
We also write $h(D/K, \bff)$ for the class number number $h(R)$.



\subsection{The constant field of $D$}
\label{sec:23}

Let $K$, $\infty$, $A$ and $D$ be as above.
Assume that $D$ is definite with respect to
$\infty$, that is, the completion $D_\infty:=D\otimes_K K_\infty$ at
$\infty$ is a division algebra.
For any element $\alpha$ in $D$ which is algebraic over $\F_q$,
$\F_q(\alpha)$ is a finite field with $$[\F_q(\alpha): \F_q] =
[K(\alpha):K] \mid n,$$ 
as $K(\alpha)$ is a subfield of $D$.

Let $s$ be a positive divisor of $n$. Put $L_s:=K\F_{q^s}$, the
constant field extension of $K$ of degree $s$. For any place $v$ of
$K$, let $\ell_{s,v}$ be the number of places of $L_s$ over $v$. One
has $\ell_{s,v}=\gcd(s, \deg v)$. The following lemma gives the
criterion for the existence of an embedding of $L_s$ into $D$. 

\begin{lem}\label{22}
There exists an embedding $\iota: L_s \hookrightarrow D$ if and only
if $\ell_{s,v}$ divides $m_v$ for all places $v$ of $K$.
\end{lem}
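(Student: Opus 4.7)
The plan is to apply the standard embedding criterion (see e.g.\ Reiner, \emph{Maximal Orders}, Ch.~7): for a separable extension $L/K$ of degree $s\mid n$, $L$ embeds $K$-linearly into a central simple $K$-algebra $D$ of degree $n$ if and only if $\ind_L(D\otimes_K L)$ divides $n/s$. This converts the global embedding question into a Brauer-index computation over $L_s$.

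To evaluate $\ind_{L_s}(D\otimes_K L_s)$, I would use the local-global principle for the Schur index (Albert--Brauer--Hasse--Noether), which identifies it with $\operatorname{lcm}_w\ind_{L_{s,w}}((D\otimes L_s)_w)$ over all places $w$ of $L_s$. For $w\mid v$, the base-change formula for invariants gives
\[
\inv_w\bigl((D\otimes L_s)_w\bigr)=[L_{s,w}:K_v]\cdot\inv_v(D)\pmod{\Z}.
\]
Since $L_s/K$ is an unramified constant field extension with $\ell_{s,v}$ places above $v$, each of local degree $s/\ell_{s,v}$, the local Schur index at any $w\mid v$ is $d_v/\gcd(d_v,\,s/\ell_{s,v})$. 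At $\infty$, definiteness gives $m_\infty=1$ and $d_\infty=n$; the condition $\ell_{s,\infty}\mid 1$ is exactly the requirement that $L_s\otimes K_\infty$ be a single field, which is both necessary (a non-trivial product of fields cannot inject into the division algebra $D_\infty$) and sufficient (a single unramified local field of degree dividing $n$ sits inside the maximal unramified subfield of $\Delta_\infty$).

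Combining these, the embedding criterion reduces at each finite place $v$ to $d_v/\gcd(d_v,\,s/\ell_{s,v})\mid m_vd_v/s$, equivalently $s/g_v\mid m_v$ with $g_v:=\gcd(d_v,\,s/\ell_{s,v})$. The last step is the arithmetic equivalence ``$s/g_v\mid m_v$'' $\Longleftrightarrow$ ``$\ell_{s,v}\mid m_v$'' under the standing hypothesis $s\mid n=m_vd_v$. One direction is clear: $g_v\mid s/\ell_{s,v}$ forces $\ell_{s,v}\mid s/g_v$, so $s/g_v\mid m_v$ implies $\ell_{s,v}\mid m_v$. For the reverse, writing $m_v=\ell_{s,v}m'$ and using $s\mid\ell_{s,v}m'd_v$ gives $a\mid m'd_v$ with $a:=s/\ell_{s,v}$; since $\gcd(a/g_v,\,d_v/g_v)=1$ one deduces $a/g_v\mid m'$, hence $s/g_v=(a/g_v)\ell_{s,v}\mid m_v$.

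The main point to watch out for is that the naive sufficient condition ``$L_s\otimes K_v$ embeds as a $K_v$-algebra into $D_v$'' is strictly weaker than what is required: non-uniform $\Delta_v$-decompositions $\Delta_v^{m_v}=\bigoplus_{w\mid v}\Delta_v^{m_w}$ produce local embeddings that need not globalize. The genuine condition forces $m_w=m_v/\ell_{s,v}$ uniformly in $w$, expressing that the centralizer $C_D(L_s)$ must be a central simple $L_s$-algebra of uniform degree $n/s$; this uniformity is precisely what the Schur-index criterion in the first step encodes.
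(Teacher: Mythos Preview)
Your argument is correct and follows essentially the same route as the paper's proof. Both invoke the global embedding criterion (you phrase it as $\ind_{L_s}(D\otimes_K L_s)\mid n/s$, the paper as $s\mid c_s$ where $c_s$ is the capacity of $D\otimes_K L_s$; these are equivalent since $c_s\cdot\ind=n$), reduce to local indices via Albert--Brauer--Hasse--Noether, and compute the local index at $w\mid v$ from the base-change formula for invariants. The only cosmetic difference is in the final arithmetic: the paper packages the local capacity as $r_w=\gcd(n,\,s\cdot m_v/\ell_{s,v})$, from which $s\mid r_w\Leftrightarrow \ell_{s,v}\mid m_v$ is immediate, whereas you pass through the intermediate condition $s/g_v\mid m_v$ and then argue the equivalence with $\ell_{s,v}\mid m_v$ by hand. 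Your separate treatment of $\infty$ is unnecessary (the general argument already covers it, since $m_\infty=1$), but harmless.
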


\begin{proof}
Suppose $D \otimes_K L_s \cong \text{Mat}_{c_s}(\Delta_s)$, where
$\Delta_s$ is a central division algebra over $L_s$.
By \cite[Lemma 2.3]{SYY}, there exists an embedding of $L_s$
into $D$ if and only if $s | c_s$.
For each place $v$ of $K$, let $w$ be a place of $L_s$ lying above
$v$. We denote by $L_{s,w}$ the completion of $L_s$ at $w$; one has
$[L_{s,w}:K_v]=s/\ell_{s,v}$.  Write
$$D \otimes_K L_{s,w} \cong \text{Mat}_{r_w}(\Delta_w^{\prime}),$$
where $\Delta_w^{\prime}$ is a central division algebra over
$L_{s,w}$. Then by
\cite[(31.9) Theorem]{Rei}
(remembering $D_v\simeq \Mat_{m_v}(\Delta_v)$),
\begin{equation}
  \label{eq:21}
  r_w = m_v \cdot \text{gcd}(d_v , [L_{s,w}:K_v]) =
\text{gcd}(m_v d_v, m_v \cdot \frac{s}{\ell_{s,v}})
=\text{gcd}(n, s \cdot \frac{m_v}{\ell_{s,v}}).
\end{equation}
It is known that (cf.\ \cite[(32.17) Theorem]{Rei})
\begin{equation}
  \label{eq:22}
  c_s =
\text{gcd}(r_w: \text{ place $w$ of $L_s$}).
\end{equation}
It follows from (\ref{eq:21}) that  $s|r_w$ if and only if
$\ell_{s,v}|m_v$, and (\ref{eq:22}) says that
$s|c_s$ if and only if $s|r_w$. This completes the proof of the lemma.
\qed
\end{proof}

\begin{defn}\label{defn2.2}
Let $s_0$ be the divisor of $n$ which is maximal such that
$\ell_{s_0,v}$ divides $m_v$ for all places $v$ of $K$. Then the
finite field $\F_D:= \F_{q^{s_0}}$ is called 
the {\it constant field of $D$}. 
\end{defn}

\begin{Rem}
(1) By Lemma~\ref{22}, any maximal finite subfield of $D$ 
is isomorphic to $\F_D$.\\
(2) For our convenience, we also denote by $\F_K$ 
the constant field of $K$.
\end{Rem}

Suppose $n=p_1^{n_1} \dots p_r^{n_r}$, where $p_i$ are distinct prime
numbers and $n_i$ are positive integers. For any $v \in \Sigma_K$, the
set of all places of $K$, let
\[ s=\prod_{i=1}^r p_i^{n_i(s)}, \quad m_v=\prod_{i=1}^r p_i^{m_i(v)},
\quad\text{and} \quad  \gcd(\deg v, n)=\prod_{i=1}^r p_i^{n_i(v)} \]
be the primary decomposition. 
Then one has $\ell_{s,v}\mid m_v$ if and only if
\begin{equation}
  \label{eq:23}
  \min \{n_i(s), n_i(v)\}\le m_i(v), \quad \forall\, i=1,\dots, r.
\end{equation}
Note that $m_i(v)=n_i$ for almost all places $v\in \Sigma_K$.
For each $i=1,\dots, r$, if $n_i(v)\le m_i(v)$ for all 
$v\in \Sigma_K$, then
$n_i(s)$ can be any integer $0\le n_i(s)\le n_i$. Let $S_i:=\{v\in
S\mid n_i(v)>m_i(v)\, \}$. Then if $S_i$ is non-empty, one has
\[ 0\le n_i(s) \le \min_{v\in S_i} \{m_i(v)\}. \]
Therefore, we obtain the following lemma which computes the degree
$s_0=[\F_D:\F_K]$. 

\begin{lem}\label{23}
  Let the notation be as above and let $s_0=\prod_{i=1}^r
  p_i^{n_i(s_0)}$. Then one has for $i=1,\dots,r$,
  \begin{equation}
    \label{eq:24}
   n_i(s_0)=
\begin{cases}
  \min_{v\in S_i} \{m_i(v)\},  &  \text{if $S_i\neq \emptyset$ },\\
  n_i, & \text{otherwise.}
\end{cases}
  \end{equation}
\end{lem}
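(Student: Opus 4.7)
The plan is to reduce the lemma to a prime-by-prime optimization using Definition~\ref{defn2.2} and the reformulation of the divisibility condition given in equation (\ref{eq:23}). By definition, $s_0$ is the largest divisor of $n$ for which $\ell_{s_0,v}\mid m_v$ at every place $v$, and by (\ref{eq:23}) this translates into the collection of inequalities $\min\{n_i(s_0), n_i(v)\}\le m_i(v)$ ranging over all $i$ and all $v \in \Sigma_K$. Since the $p_i$-adic valuation $n_i(s_0)$ of $s_0$ is independent of the other primes and is bounded by $n_i$, choosing the maximal $s_0$ amounts to choosing, for each $i$ independently, the largest integer $n_i(s_0)\in\{0,1,\dots,n_i\}$ for which the above inequalities hold at every $v$.

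Fix $i$ and examine the constraint at a single place $v$. If $v\notin S_i$, then $n_i(v)\le m_i(v)$, so $\min\{n_i(s_0),n_i(v)\}\le n_i(v)\le m_i(v)$ automatically, independent of the choice of $n_i(s_0)$. If $v\in S_i$, then $n_i(v)>m_i(v)$, and the minimum falls below $m_i(v)$ only when $n_i(s_0)\le m_i(v)$. Intersecting these constraints over all $v\in S_i$ gives the necessary and sufficient condition $n_i(s_0)\le \min_{v\in S_i}m_i(v)$. Maximizing $n_i(s_0)$ therefore yields $n_i(s_0)=\min_{v\in S_i}m_i(v)$ when $S_i\ne\emptyset$, and the unconstrained maximum $n_i(s_0)=n_i$ when $S_i=\emptyset$, which is exactly the formula (\ref{eq:24}).

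The only point worth verifying is that the minimum in (\ref{eq:24}) makes sense, i.e.\ that $S_i$ is finite. This holds because $d_v=1$ and hence $m_v=n$ for all but finitely many $v$ (namely for $v$ outside the finite set $S=S_D$ of ramified places), which gives $m_i(v)=n_i\ge n_i(v)$ at such $v$; consequently $S_i\subseteq S_D$ is finite and the minimum is attained. Combining the two cases above across all indices $i$ produces $s_0=\prod_{i=1}^{r}p_i^{n_i(s_0)}$ with the claimed exponents, completing the proof. The argument is essentially bookkeeping on top of (\ref{eq:23}); the only mildly delicate point, handled by the finiteness of $S_D$, is to ensure that $S_i$ is finite so that the stated formula is well-defined.
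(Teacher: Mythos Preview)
Your proof is correct and follows essentially the same approach as the paper: the paper derives the lemma from the preceding paragraph by observing that condition~(\ref{eq:23}) decouples across primes, so maximizing $s_0$ reduces to maximizing each $n_i(s_0)$ subject to $n_i(s_0)\le m_i(v)$ for $v\in S_i$. You have simply written out this argument in full, including the verification that $S_i\subseteq S_D$ is finite.
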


Now, let $R$ be an $A$-order in $D$. Then the multiplicative group
$R^{\times}$ must be a finite cyclic group. More precisely: 

\begin{lem}\label{21}
Let the notation and assumption be as above and let $R$ be any
$A$-order in $D$. Then
the multiplicative group $R^{\times}$ is isomorphic to
$\F_{q^s}^\times$ for some positive integer $s \mid [\F_D:\F_K]$.

\end{lem}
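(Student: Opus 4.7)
The plan is a three-step argument: first establish that $R^\times$ is finite, then that it is cyclic, and finally identify it with the unit group of a finite subfield of $D$ whose degree divides $[\F_D:\F_K]$.

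For finiteness I would exploit the definiteness of $D$. For any $u\in R^\times$, both $u$ and $u^{-1}$ lie in $R$, so $\Nrd(u)$ and $\Nrd(u^{-1})=\Nrd(u)^{-1}$ both lie in $A$; hence $\Nrd(u)\in A^\times=\F_q^\times$. Since $D_\infty$ is a division algebra, $|x|_{D_\infty}:=|\Nrd(x)|_\infty^{1/n}$ defines an absolute value on $D_\infty$ extending $|\cdot|_\infty$, and its closed unit ball is the compact maximal order $\O_{D_\infty}$; in particular $R^\times\subseteq R\cap \O_{D_\infty}$. On the other hand, the product formula yields $|f|_\infty\ge 1$ for every nonzero $f\in A$. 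Applied to $\Nrd(x)\in A\setminus\{0\}$ (nonzero because $D_\infty$ has no zero divisors) for any nonzero $x\in R$, this shows $|x|_{D_\infty}\ge 1$; that is, $R$ is discrete in $D_\infty$. Therefore the bounded discrete set $R\cap \O_{D_\infty}$ is finite, and so is $R^\times$.

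For cyclicity, let $\F''\subseteq D_\infty$ be the $\F_q$-subalgebra generated by the finite group $R^\times$. It is a finite subring of the division algebra $D_\infty$, hence has no zero divisors and is a finite division ring; by Wedderburn's little theorem it is a finite field. Consequently $R^\times\subseteq \F''^\times$ is cyclic.

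To finish, pick a generator $u$ of $R^\times$ and set $s:=[\F_q(u):\F_q]$. Since $u$ has finite order, $\F_q[u]=\F_q(u)\cong \F_{q^s}$ is a finite field, and $\F_q\subseteq A\subseteq R$ together with $u\in R$ gives $\F_q(u)\subseteq R$, so $\F_q(u)^\times\subseteq R^\times$. The reverse inclusion $R^\times=\langle u\rangle\subseteq \F_q(u)^\times$ is tautological, so $R^\times=\F_q(u)^\times\cong \F_{q^s}^\times$. Since $\F_q(u)$ is a finite subfield of $D$, Lemma~\ref{22} together with Lemma~\ref{23} (the explicit description of $s_0=[\F_D:\F_K]$) gives $s\mid s_0$. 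The main obstacle is the first step: finiteness of $R^\times$ depends essentially on $D$ being definite, so that $D_\infty$ carries a genuine absolute value; once this is secured, Wedderburn's little theorem and the embedding criterion of Lemma~\ref{22} make the rest routine.
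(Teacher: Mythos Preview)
Your proof is correct and follows the same overall architecture as the paper's: finiteness via discreteness in $D_\infty$ and compactness of $O_{D_\infty}$, then cyclicity, then identification of $R^\times$ with $\F_{q^s}^\times$ via the finite field $\F_q[R^\times]\subset R$, and finally $s\mid s_0$ from the embedding criterion.

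The one genuine difference is the cyclicity step. The paper argues by showing that the reduction map $R^\times\to (O_{D_\infty}/\mathfrak{P}_\infty)^\times$ is injective: if $a=1+\alpha$ with $\alpha\in\mathfrak{P}_\infty$ has finite order $p^r m$ with $p\nmid m$, one first kills the prime-to-$p$ part by a binomial expansion and then uses that $D$ is a division algebra to conclude $\alpha=0$. This is entirely elementary and, as a bonus, exhibits $R^\times$ concretely as a subgroup of the residue field $\F_{\Delta_\infty}^\times$. Your route instead observes that $\F_q[R^\times]$ is a finite subring of the division algebra $D_\infty$, hence a finite domain, hence a field by Wedderburn's little theorem; this is cleaner conceptually but invokes a nontrivial theorem. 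Either way one arrives at $\F_q[R^\times]\cong\F_{q^s}$, and your more explicit treatment of the finiteness step (via reduced norms and the product formula) fills in details the paper leaves to the reader.
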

\begin{proof}
Since $R $ is discrete in $D_{\infty}$ and $R^{\times}$ is in the
maximal compact subring $O_{D_{\infty}}$ of $D_{\infty}$, we have the
finiteness of $R ^{\times}$. 
We show that the group homomorphism
$$ R^{\times} \longrightarrow
(O_{D_{\infty}}/\mathfrak{P}_{\infty})^{\times}.$$ 
is injective.
Let $a \in R^{\times}$ be an element in the kernel.
Then $a = 1 + \alpha$ where $\alpha \in \mathfrak{P}_{\infty}$.
The finiteness of the order of $a$ implies that $\alpha = 0$.
To see this, 
suppose the order of $a = p^r \cdot m$ where $p$ is the characteristic
of $\F_q$ and $\text{gcd}(p,m) = 1$. Write $a^{p^r} = 1+b\Pi$ 
where $b \in O_{\Delta_{\infty}}$ and $\Pi$ is a generator of
$\mathfrak{P}_{\infty}$. 
Then
$$1 = (a^{p^r})^m = 1 + m \cdot b\Pi + \cdots$$
and hence $b = 0$.
Thus we have $a^{p^r} = 1$, i.e.\ $(\alpha)^{p^r} = 0$.
Since $D$ is a division algebra, $\alpha=0$.

Therefore $R^{\times}$ is a finite cyclic group, and
$\F_q[R^\times]\simeq \F_{q^s}$
for some positive integer $s\mid s_0 = [\F_D:\F_q]$. Since
$\F_q[R^\times]\subset R$, one has
\[ R^\times=(\F_q[R^\times])^\times
\simeq \F_{q^s}^\times. \]
This completes the proof of the lemma. 
\qed

\end{proof}

\section{Strategy of computing the class number $h(R)$}
\label{sec:03}

In this section, we explain how to compute the class number $h(R)$ of
a hereditary $A$-order $R$ in $D$.
Recall that $D$ is a definite central simple algebra over $K$ with
$\text{dim}_K D = n^2$ and $R$ is a hereditary $A$-order in $D$.


\subsection{Mass formulas}\label{sec:31}

A \it locally free $($fractional$)$ right ideal $I$ of $R$ \rm is a
projective $A$-lattice in $D$ such that $I \cdot R = I$
and for each finite place $v$ of $K$, there exists $\alpha_v$ in
$D_v^{\times}$ such that $I_v (:= I \otimes_A O_v) = \alpha_v R_v$.
Two locally free right ideals $I_1$ and $I_2$ are called equivalent if
there exists an element $b$ in $D^{\times}$ such that $I_1 = b \cdot
I_2$. We are interested in the number $h = h(R)$ of locally free right
ideal classes of $R$ in $D$.

Let $\mathbb{A}_K^{\infty}$ be the ring of finite adeles of $K$, and
let $\widehat{A}$ be the pro-finite completion of $A$.
Note that the set of locally free right ideal classes of $R$ can be
identified with the finite double coset space
$D^{\times} \backslash \widehat{D}^{\times} / \widehat{R}^\times$,
where $\widehat{D} := D \otimes_K \mathbb{A}_K^{\infty}$ and
$\widehat{R}:= R \otimes_A \widehat{A}$.
More precisely, let $g_1,...,g_h$ be representatives of the double
cosets.
Then
$\{I_i:=D\cap g_i\widehat{R} \mid 1\leq i \leq h\}$
is a set of representatives of locally free right ideal classes of
$R$.

For $1\leq i \leq h$, let $R_i$ be the left order of $I_i$.
Since $D$ is definite, by Lemma~\ref{21} the cardinality of the
multiplicative group 
$R_i^{\times}$ is finite.
The \it mass sum \rm $\text{\rm Mass}(D/K,R)$ is defined by
\begin{equation}
  \label{eq:31}
  \text{\rm Mass}(D/K,R) := \sum_{1\leq i \leq
  h}\frac{1}{\#(R_i^{\times})}.
\end{equation}
Let $S' = S'_R$ be the set of finite places $v$ of $K$ for
which $R_v$ is not a
maximal $O_v$-order in $D_v$.
Recall that $S=S_D$ is the set of ramified places (including $\infty$)
of $K$ for $D$.
The mass sum $\text{Mass}(D/K,R)$ has an explicit description by the
following theorem:

\begin{thm}\label{31}
\text{\rm (Mass formula)}
Let $D$ be a definite central simple algebra over $K$ with $\text{\rm
  dim}_K D = n^2$.
Let $R$ be a hereditary $A$-order in $D$.
For each place $v$ of $K$. Suppose the local invariant $\text{\rm
  inv}_v(D)$ is $\kappa_v/d_v \text{ } \bmod \Z$ and
the invariant of $R$ at $v$ $($when $v$ is a finite place of $K)$ is
$\vec{f}_v = (f_{v,1},...,f_{v,r_v})$. 
Then we have
\begin{equation}
  \label{eq:32}
  \text{\rm Mass}(D/K,R) =
\frac{\#\text{\rm Pic}(A)}{q-1} \cdot \prod_{i=1}^{n-1}\zeta_K(-i)
\cdot \prod\limits_{v \in S} \mathcal{T}_v
\cdot \prod\limits_{v \in S'} \mathcal{T}'_v,
\end{equation}
where $\Pic(A)$ is the Picard group of $A$, $\zeta_K(s)$ is the
Dedekind zeta function of $K$:
$$\zeta_K(s):= \prod_{v \in \Sigma_K} (1-N(v)^{-s})^{-1},$$
the constants $\mathcal{T}_v$ and $\mathcal{T}'_v$ are given
by
$$\mathcal{T}_v = \prod\limits_{1\leq i \leq n-1, \atop d_v \nmid
  i}\big(N(v)^{i}-1\big), $$
and  
\begin{equation}
  \label{eq:325}
  \mathcal{T}'_v := [\GL_{m_v}(O_{\Delta_v}):R_v^\times]=
\frac{\prod\limits_{i=1}\limits^{m_v} \big(N(v)^{d_v
    i}-1\big)}{\prod\limits_{i=1}\limits^{r_v}
  \left(\prod\limits_{j=1}\limits^{f_{v,i}}\big(N(v)^{d_v
      j}-1\big)\right)}.
\end{equation}
\end{thm}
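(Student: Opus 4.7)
The plan is to prove (\ref{eq:32}) by an adelic mass computation, first reducing from $R$ to a maximal overorder and then evaluating the mass of the maximal order via Tamagawa numbers. Identify $\Cl(R)$ with $D^\times \backslash \widehat{D}^\times / \widehat{R}^\times$ as already recalled. Fix a Haar measure $\mu = \prod_v \mu_v$ on $\widehat{D}^\times$. Since the stabilizer of the coset $g_i \widehat{R}^\times$ under left translation by $D^\times$ is exactly $R_i^\times$, a standard orbit decomposition gives
$$\Mass(D/K,R) = \frac{\mu(D^\times \backslash \widehat{D}^\times)}{\mu(\widehat{R}^\times)},$$
with both quantities finite because $D$ is definite.

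Next I would pick an $A$-order $R_{\max} \supseteq R$ with $R_{\max,v}$ maximal (invariant $\vec{f}_v = (m_v)$) for every $v \in S'$ and $R_{\max,v} = R_v$ otherwise. Since the numerator above only depends on $D$, it follows that
$$\Mass(D/K,R) = \Mass(D/K,R_{\max}) \cdot \prod_{v \in S'} [R_{\max,v}^\times : R_v^\times].$$
Reducing modulo $\mathfrak{P}_v$, the group $R_v^\times$ becomes the stabilizer in $\GL_{m_v}(O_{\Delta_v}/\mathfrak{P}_v)$ of a flag of type $(f_{v,1}, \dots, f_{v, r_v})$; the standard flag-variety count then yields exactly the quantity $\mathcal{T}'_v$ of (\ref{eq:325}). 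This also confirms invariance of $\mathcal{T}'_v$ under cyclic permutations of $\vec{f}_v$, so the formula is well defined on the invariant.

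It remains to establish the maximal-order mass
$$\Mass(D/K,R_{\max}) = \frac{\#\Pic(A)}{q-1} \cdot \prod_{i=1}^{n-1}\zeta_K(-i) \cdot \prod_{v \in S} \mathcal{T}_v.$$
For this I would normalize $\mu_v(\GL_n(O_v)) = 1$ at every unramified finite place and invoke the Tamagawa-number identity $\tau(D^1) = 1$ (Harder, in the function-field setting) for the reduced norm-one group in the exact sequence $1 \to D^1 \to D^\times \xrightarrow{\Nrd} K^\times \to 1$. The prefactor $\#\Pic(A)/(q-1)$ arises from the measure of the relevant idele class group of $K$ (with the infinite place dropped, which is legitimate because $D$ is definite). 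At unramified finite places the Euler product of local Tamagawa volumes contributes $\prod_{i=2}^n \zeta_K(i)$. At a ramified place $v \in S$, the local volume $\mu_v(\GL_{m_v}(O_{\Delta_v}))$ only involves factors $1 - N(v)^{-d_v i}$, so the shortfall relative to the unramified Euler factor is precisely $\mathcal{T}_v = \prod_{1 \le i \le n-1,\ d_v \nmid i}(N(v)^i - 1)$. Finally, the functional equation of $\zeta_K$ converts $\prod_{i=2}^n \zeta_K(i)$ into $\prod_{i=1}^{n-1}\zeta_K(-i)$, the genus-dependent $q$-powers being absorbed by the change of normalization.

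The main obstacle is the bookkeeping at ramified places and across the functional equation: checking that the local discrepancy between $\mu_v(\GL_{m_v}(O_{\Delta_v}))$ and the ``unramified'' Euler factor is exactly $\mathcal{T}_v$, and that the conversion from $\zeta_K(i)$ at positive arguments to $\zeta_K(-i)$ leaves behind no extraneous discriminant-type factor beyond $\#\Pic(A)/(q-1)$. Once those two normalizations are matched, the formula falls out. The flag-variety step producing $\mathcal{T}'_v$ is elementary given the block structure recalled in Section~\ref{sec:22}.
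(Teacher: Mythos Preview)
The paper does not actually prove this theorem: its entire proof reads ``See \cite[p.\ 382]{D-G} and \cite{Pra}. Also see \cite{W-Y} for detailed computations of the proof.'' So there is nothing to compare your argument against within the paper itself.

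That said, your sketch is exactly the standard adelic approach that underlies the cited references. The reduction from $R$ to a maximal overorder giving the factors $\mathcal{T}'_v$ via the index $[R_{\max,v}^\times:R_v^\times]$ is correct and elementary (it is the content of the description of hereditary orders in Section~\ref{sec:22}). The maximal-order mass is then computed via Prasad's volume formula \cite{Pra}, which is precisely the Tamagawa-number argument you outline: one uses $\tau(D^1)=1$, compares local volumes at ramified places to the unramified Euler factors, and converts via the functional equation. The detailed bookkeeping you flag as the ``main obstacle'' (matching the ramified local discrepancy to $\mathcal{T}_v$ and tracking the $q$-powers through the functional equation) is exactly what the authors defer to their companion paper \cite{W-Y}. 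So your proposal is correct in outline and follows the same route as the references the paper invokes; the honest gaps you identify are real but are filled in those sources rather than here.
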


\begin{proof}
  See \cite[p.\ 382]{D-G} and \cite{Pra}. Also see \cite{W-Y} for
  detailed computations of the proof.\qed
\end{proof}

\subsection{Optimal embeddings}\label{sec:32}

Let $\F_D$ be the constant field of $D$. 
We have shown that $R_i^\times\cong \F_{q^{s_i}}^\times$ for
some $s_i\mid s_0$ where $s_0=[\F_D:\F_K]$. 
Set
\begin{equation}
  \label{eq:33}
  h_{s} = h_s(D/K,R):= \#\{ 1\leq i \leq h \mid \#(R_i^{\times}) =
  q^s-1\}. 
\end{equation}
We call $h_s(D/K,R)$ \it the weight-$s$ class number of $R$. \rm 
We point out that $h_s$ depends only on the invariants of $R$ at
finite places of $K$. 
The mass sum $\text{Mass}(D/K,R)$ then can be written as
\begin{equation}
  \label{eq:34}
  \text{Mass}(D/K,R) = \sum_{s \mid s_0} \frac{h_{s}}{q^{s}-1}.
\end{equation}
Note that
\begin{equation}
  \label{eq:35}
  h=\sum_{s|s_0} h_s.
\end{equation}
For any positive divisor $s$ of $n$, we denote by $L_s$ the constant
field extension $K\mathbb{F}_{q^s}$ of degree $s$.
Let $O_{L_s}:=A\mathbb{F}_{q^s}$, the integral closure of $A$ in $L_s$.
For $1\leq i \leq h$, an \it optimal embedding of $O_{L_s}$ into $R_i$
\rm is an embedding $f: L_s \hookrightarrow D$ such that
$$f(L_s)\cap R_i = f(O_{L_s}).$$
If the set of optimal embeddings of $O_{L_s}$ into $R_i$ is non-empty,
then there are exactly $s$ different embeddings into $R_i$. To see
this, the map $f$ is determined by its restriction on the constant
subfield $\F_{q^s}$ and the image $f(\F_{q^s})$ is contained in
the finite field $\F_q[R_i^{\times}]$. 
Therefore, there are exactly $s$ maps $f$. 
Note that an optimal embedding of
$L_s$ into $R_i$ exists if and only if $\F_{q^s}$ can be embedded into
$R_i$.

The number of optimal embeddings of $O_{L_s}$ into $R_i$ are related
to the class numbers
$h_{s'}$ for $s|s'$ via the following identity:
\begin{equation}
  \label{eq:36}
  s\cdot\sum_{s': s\mid s'\mid s_0} h_{s'} = \sum_{1\leq i \leq h} \#\{
\text{optimal embeddings of $O_{L_{s}}$ into $R_i$}\}.
\end{equation}
Let
\begin{equation}
\label{eq:38}
\text{\bf E}(D/K,s,R):= \sum_{1\leq i \leq h} \#\{
\text{optimal embeddings of $O_{L_{s}}$ into $R_i$}\}.
\end{equation}
Then $\text{\bf E}(D/K,s,R)$, as the same as $h$ and $h_s$, also
depends only on the invariants of $R$ at finite places of $K$. 
Suppose we can compute $\text{\bf E}(E/K,s,R)$ for each divisor $s>1$
of $s_0$.  
Then together with the mass formula (\ref{eq:32}) and the equation
(\ref{eq:34}), we have enough equations to solve the numbers $h_s$.  
This gives the class number $h$ in question.

\subsection{Adelization}
\label{sec:33}
Now we focus on the computation of $\text{\bf E}(D/K,s,R)$ for $s>1$. 
Fix an inclusion $\iota: L_{s_0}=K\F_{D}
\hookrightarrow D$.
For each positive divisor $s$ of $s_{0}$, the set of
optimal embeddings of $O_{L_s}$ into $R_i$
can be identified with 
\[ C_{\iota}(L_s)^{\times}\backslash \mathcal{E}_{\iota}(s,R_i)
/(R_i^{\times}), \] 
where $C_{\iota}(L_s)$ is the centralizer of $\iota(L_s)$ in $D$ and
$$\mathcal{E}_{\iota}(s,R_i) := \{g \in D^{\times} \mid g^{-1}
\iota(L_s) g \cap R_i 
= g^{-1}\iota(O_{L_s}) g\}.$$
Set
\begin{equation}
  \label{eq:385}
 \widehat{\mathcal{E}}_{\iota}(s,R):=\{g \in \widehat{D}^{\times} \mid 
 \iota(L_s) \cap g\widehat{R}g^{-1} = \iota(O_{L_s}) \}.
\end{equation}
Then
\begin{lem}\label{32}
We have the following bijection:
$$\begin{tabular}{cccc}
$\Phi:$ & $\coprod\limits_{i=1}^h C_{\iota}(L_s)^{\times}\backslash 
\mathcal{E}_{\iota}(s,R_i) / R_i^{\times}$
 & $\cong$ & $C_{\iota}(L_s)^{\times} \backslash
 \widehat{\mathcal{E}}_{\iota}(s,R) / 
 \widehat{R}^{\times}$\\
 & $g \in \mathcal{E}_{\iota}(s,R_i)$ & $\longmapsto$ &
 $C_{\iota}(L_s)^{\times} g g_i 
 \widehat{R}^{\times}$,
\end{tabular}$$
where $g_1,...,g_h$ are the chosen representatives of double cosets in
$D^{\times} \backslash \widehat{D}^{\times}/ \widehat{R}^{\times}$.
\end{lem}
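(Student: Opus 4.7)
The plan is to use the standard adelic/local-global interplay: start from the double coset decomposition $\widehat{D}^\times = \coprod_{i=1}^h D^\times g_i \widehat{R}^\times$ and the identity $R_i = D \cap g_i \widehat{R} g_i^{-1}$, and then show that the natural map sending $g \in \mathcal{E}_{\iota}(s,R_i)$ to $C_\iota(L_s)^\times g g_i \widehat{R}^\times$ is a well-defined bijection. I would organize the proof into the three usual steps: well-definedness, surjectivity, and injectivity.

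For well-definedness, if $g' = c g r$ with $c \in C_\iota(L_s)^\times$ and $r \in R_i^\times$, then since $r \in R_i^\times \subset (g_i \widehat{R} g_i^{-1})^\times$, one has $r = g_i \hat{r} g_i^{-1}$ for some $\hat{r} \in \widehat{R}^\times$, and the image class $C_\iota(L_s)^\times g' g_i \widehat{R}^\times = C_\iota(L_s)^\times c g g_i \hat{r} \widehat{R}^\times$ equals $C_\iota(L_s)^\times g g_i \widehat{R}^\times$. One also has to check that $g g_i$ actually lies in $\widehat{\mathcal{E}}_\iota(s, R)$: using $g_i \widehat{R} g_i^{-1} \cap D = R_i$ and the optimality $g^{-1}\iota(L_s)g\cap R_i = g^{-1}\iota(O_{L_s})g$, one obtains $\iota(L_s) \cap g g_i \widehat{R} (g g_i)^{-1} = \iota(O_{L_s})$ after intersecting with $D$ (the reverse inclusion is automatic since $\iota(O_{L_s}) \subset R \subset g_i \widehat{R} g_i^{-1}$ adelically, and conjugating by $g$ preserves this; the forward inclusion uses that elements of $\iota(L_s)$ are already in $D$, so the adelic intersection collapses to the rational one).

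For surjectivity, given $h \in \widehat{\mathcal{E}}_\iota(s, R)$, decompose $h = b g_i \hat{r}$ with $b \in D^\times$, $\hat{r} \in \widehat{R}^\times$, and some $i$. Then $g_i \widehat{R} g_i^{-1} = b^{-1} h \widehat{R} h^{-1} b$, whence
\[
b^{-1}\iota(L_s)b \cap R_i = b^{-1}\iota(L_s)b \cap g_i\widehat{R}g_i^{-1} = b^{-1}\bigl[\iota(L_s) \cap h\widehat{R}h^{-1}\bigr]b = b^{-1}\iota(O_{L_s})b,
\]
so $b \in \mathcal{E}_\iota(s,R_i)$ and $\Phi(b) = C_\iota(L_s)^\times h \widehat{R}^\times$.

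For injectivity, suppose $g \in \mathcal{E}_\iota(s,R_i)$ and $g' \in \mathcal{E}_\iota(s,R_j)$ satisfy $g g_i = c g' g_j \hat{r}$ for some $c \in C_\iota(L_s)^\times$, $\hat{r} \in \widehat{R}^\times$. Rearranging gives $g_i \in D^\times g_j \widehat{R}^\times$, so by the choice of representatives $i = j$. Then $g^{-1} c g' = g_i \hat{r}^{-1} g_i^{-1}$ lies in $D^\times \cap g_i \widehat{R}^\times g_i^{-1} = R_i^\times$, proving that $g$ and $g'$ represent the same class. I do not expect a genuine obstacle here; the only subtle point is keeping track of which side the conjugation acts on and confirming that the "rational intersected with adelic" arguments use only that $\iota(L_s) \subset D$ and that $R = D \cap \widehat{R}$, which is standard.
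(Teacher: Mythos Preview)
Your proof is correct and follows essentially the same approach as the paper: both use the decomposition $\widehat{D}^\times=\coprod D^\times g_i\widehat{R}^\times$ together with $R_i=D\cap g_i\widehat{R}g_i^{-1}$. The paper's version is terser---it declares well-definedness clear and constructs the inverse map $C_\iota(L_s)^\times \hat g\,\widehat{R}^\times\mapsto C_\iota(L_s)^\times b_{\hat g}\,R_{i_{\hat g}}^\times$ directly (handling surjectivity and injectivity simultaneously)---whereas you spell out well-definedness, surjectivity, and injectivity separately, but the underlying computations are identical.
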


\begin{proof}
It is clear that $\Phi$ is well-defined.
Now, for each $\hat{g} \in \widehat{\mathcal{E}}_{\iota}(s,R)$, there
exist 
an element $b_{\hat{g}} \in D^{\times}$,
an integer $i_{\hat{g}}$ with $1\leq i_{\hat{g}}
\leq h$, and an element $\hat{\gamma}_{\hat{g}}
\in \widehat{R}^{\times}$ such
that
$$\hat{g} = b_{\hat{g}} \cdot g_{i_{\hat{g}}} \cdot
\hat{\gamma}_{\hat{g}}.$$
Then $b_{\hat{g}}$ must be in
$\mathcal{E}_{\iota}(s,R_{i_{\hat{g}}})$, and 
$$C_{\iota}(L_s)^{\times} \hat{g} \widehat{R}^{\times} \longmapsto
C_{\iota}(L_s)^{\times} b_{\hat{g}} R_{i_{\hat{g}}}^{\times} \in
C_{\iota}(L_s)^{\times} \backslash
\mathcal{E}_{\iota}(s,R_{i_{\hat{g}}}) / 
R_{i_{\hat{g}}}^{\times}$$
gives the inverse map of $\Phi$. \qed
\end{proof}

Let $\widehat{L}_s := L_s \otimes_K \mathbb{A}_K^{\infty}$, and let
$C_{\iota}(\widehat{L}_s)$ be the centralizer of
$\iota(\widehat{L}_s)$ in $\widehat{D}$.
We have the following canonical surjective map
\begin{equation}
  \label{eq:39}
  \Psi: C_{\iota}(L_s)^{\times} \backslash
  \widehat{\mathcal{E}}_{\iota}(s,R) / 
\widehat{R}^{\times} \twoheadrightarrow
    C_{\iota}(\widehat{L}_s)^{\times} \backslash
  \widehat{\mathcal{E}}_{\iota}(s,R) / 
    \widehat{R}^{\times}.
\end{equation}
The fiber of a double coset $C_{\iota}(\widehat{L}_s)^{\times}
\hat{g} \widehat{R}^{\times}$ under the map $\Psi$
is equal to the following double coset space
\begin{equation}
  \label{eq:395}
  C_{\iota}(L_s)^{\times} \backslash 
C_{\iota}(\widehat{L}_s)^{\times}
  / 
\big(C_{\iota}(\widehat{L}_s)^{\times} \cap \hat{g}
\widehat{R}^{\times} 
\hat{g}^{-1}\big).
\end{equation}
Note that the base space
$C_{\iota}(\widehat{L}_s)^{\times} \backslash
\widehat{\mathcal{E}}_{\iota}(s,R) / 
\widehat{R}^{\times}$ 
can be decomposed locally:
\begin{equation}
  \label{eq:310}
  C_{\iota}(\widehat{L}_s)^{\times} \backslash
\widehat{\mathcal{E}}_{\iota}(s,R) / 
\widehat{R}^{\times}=
\prod_{v \neq \infty} C_{\iota}(L_{s,v})^{\times} \backslash
  \mathcal{E}_{v,\iota}(s,R_v) 
/R_v^{\times},
\end{equation}
where
\[ L_{s,v}:= L_s \otimes_K K_v,\quad R_v:= R\otimes_A O_v, \quad
O_{L_s,v} := O_{L_s} \otimes_A O_v=\prod_{w|v} O_{L_s,w},\]
and
\begin{equation}
  \label{eq:311}
  \mathcal{E}_{v,\iota}(s,R_v):= \{g_v \in D_v^{\times} \mid
  \iota(L_{s,v}) \cap g_v R_v g_v^{-1} = \iota(O_{L_{s},v}) \}. 
\end{equation}

Therefore, to compute the number of optimal embeddings in question,
we need to
\begin{itemize}
\item give an explicit parametrization of
$C_{\iota}(L_{s,v})^{\times}
\backslash \mathcal{E}_{v,\iota}(s,R_v) /R_v^{\times}$ for each finite
place $v$ of $K$, and 
\item calculate the cardinality of the fiber of each double coset in
  the base space 
$C_{\iota}(\widehat{L}_s)^{\times} \backslash
\widehat{\mathcal{E}}_{\iota}(s,R) / \widehat{R}^{\times}$. 
\end{itemize}

According to the study of local optimal embeddings in
Section~\ref{sec:07}, the double coset space
$C_{\iota}(L_{s,v})^{\times} 
\backslash \mathcal{E}_{v,\iota}(s,R_v) /R_v^{\times}$ can be
understood clearly. Besides, the fiber of a double coset in 
$C_{\iota}(\widehat{L}_s)^{\times} \backslash
\widehat{\mathcal{E}}_{\iota}(s,R) / \widehat{R}^{\times}$ can be
identified with the set of locally free right ideal classes of a
corresponding hereditary $O_{L_s}$-order in the centralizer 
$C_{\iota}(L_s)$. 
When $s>1$ and $s \mid [\mathbb{F}_D:\mathbb{F}_q]$, the algebra 
$C_{\iota}(L_s)$
is again a definite central simple algebra over $L_s$, 
with $[C_{\iota}(L_s): L_s] = (n/s)^2$. 
Repeating this process, we evaluate the class number $h(R)$ eventually.

\section{Main results}\label{sec:04}

\subsection{}
\label{sec:41}

In this section, we analyze the number of global optimal embeddings. 
We establish relations among weight-$s$ class numbers $h_s(R)$ 
and those of smaller central simple subalgebras, for which we call
the ``generalized \lq\lq transfer principle''.  
Then we use these relations to evaluate the class number 
$h(R)$ recursively. 


Let $\vec{\mathbf{f}}:= (\vec{f}_v)_{v \neq \infty}$ where 
for each finite place $v$ of $K$, $\vec{f}_v =
(f_{v,1},...,f_{v,r_v})$ is a vector in $\mathbb{Z}_{> 0}^{r_v}$ with
$\sum_{j=1}^{r_v} f_{v,j} = m_v$, and for almost all $v$ we have
$r_v=1$. 
Let $R= R(D/K,\vec{\mathbf{f}})$ be a hereditary $A$-order in $D$
so that the invariant of $R_v$ is $\vec{f}_v$ at every finite place
$v$.   

Let $s$ be a positive divisor of $s_0 = [\mathbb{F}_D:\mathbb{F}_q]$. 
Recall the equation~(\ref{eq:36}) (also see (\ref{eq:38})) 
$$s \cdot \sum_{s': s \mid s' \mid s_0} 
h_{s'}(D/K,\vec{\mathbf{f}}) =
\text{\bf E}(D/K,s,\vec{\mathbf{f}}),$$ 
where $h_s(D/K,\vec{\mathbf{f}}) = h_s(D/K,R)$ and
$\text{\bf{E}}(D/K,s, \vec{\mathbf{f}}) = \text{\bf{E}}(D/K,s, R)$
are as in Section~\ref{sec:32}.

Before presenting our main result, we recall some notations and
the invariants which we need: 
\begin{itemize}
\item $D$ is a definite central simple algebra over $K$ with $\dim_K D
  = n^2$. 
\item $\kappa_v /d_v \bmod \mathbb{Z}$ is the local invariants of $D$
  at $v$ and 
$m_v = n/d_v$.
\item For each positive divisor $s$ of $n$,
put 
\[ \ell_{s,v} = \text{gcd}(s, \deg v),\quad  \text{and} \quad
t_{s,v} = \text{gcd}(s/\ell_{s,v}, d_v). \] 
\item We denote by $\Sigma_{L_s}^0$ the set of places $w$ of $L_s =
  K\mathbb{F}_{q^s}$ with $w \nmid \infty$. 
\end{itemize}
Note that $\ell_{s,v}$ is the number of places of $L_s$ over $v$ and
$t_{s,v}$ is the capacity of the central simple algebra
$\Delta_v\otimes_{K_v} L_{s,w}$ over $L_{s,w}$, that is,
$\Delta_v\otimes_{K_v} L_{s,w}=\Mat_{t_{s,v}}(\Delta_{w'})$.   

\subsection{The index set $\Omega(D/K,s,\vec{\mathbf{f}})$}
 \label{sec:42}

Let $\vec{\mathbf{f}} = (\vec{f}_v)_{v \neq \infty}$ be as above, and  
let $s$ be a positive divisor of $s_0 = [\mathbb{F}_D:\mathbb{F}_q]$.
We define a set $\Omega (D/K, s,\vec{\mathbf{f}})$ as 
the product over all finite
places of sets $\Omega_v(D/K,s,\vec{f}_v)$:
\begin{equation}
  \label{eq:41}
  \Omega (D/K, s,\vec{\mathbf{f}})=\prod_{v\neq \infty}
  \Omega_v(D/K,s,\vec{f}_v). 
\end{equation}
For each $v\in \Sigma_K^0$, 
let $\Omega_v(D/K,s,\vec{f}_v)$ denote 
the set consisting of all tuples $(\vec{f}_{w,*})_{w|v}$ indexed by
places $w$ of $L_s$ over $v$, 
where each $\vec{f}_{w,*}=(f_{w,(i,j)})$ is an $r_v\times
t_{s,v}$-matrix with  
non-negative integer entries $f_{w,(i,j)}\in \Z_{\ge 0}$ 
(for $1\le i\le r_v$ and $1\le j \le t_{s,v}$), that satisfy
the following conditions:  
If one puts 
\begin{equation}
  \label{eq:42}
f_{w,i}:=\frac{s}{\ell_{s,v}
    t_{s,v}}\cdot \sum\limits_{j=1}\limits^{t_{s,v}} f_{w,(i,j)},  
\end{equation}
then  
\begin{equation}
  \label{eq:43}
  \sum\limits_{i=1}\limits^{r_v} f_{w,i} = \frac{m_v}{\ell_{s,v}}
\quad \forall\, w \mid v, \text{ and } 
\sum\limits_{w\mid v} f_{w,i} = f_{v,i}\quad 
\text{for } 1\leq i \leq r_v.
\end{equation}



Each element of $\Omega (D/K, s,\vec{\mathbf{f}})$ is also of the form 
$\vec{\mathbf{f}}_* = (\vec{f}_{w,*})_{w
  \in \Sigma_{L_s}^0}$, where $\vec{f}_{w,*}=(f_{w,(i,j)})$ with
non-negative integers $f_{w,(i,j)}$ 
that satisfy the above conditions. We put 
an order on the index set 
$\{(i,j) \}_{1\leq i \leq r_{v}, 1\leq j \leq t_{s,v}}$ by 
$$(i,j) < (i',j') \text{ if } 
\begin{cases} 
  j<j', \text{ or }& \text{} \\ 
  {j=j' \text{ and } i < i',} & \text{ } 
\end{cases}$$ 
and write $\vec{f}_{w,*}$ as a long vector in $\Z_{\ge 0}^{r_v
  \cdot t_{s,v}}$, 
that is, 
$$ \vec{f}_{w,*} = (f_{w,(1,1)}, ...,
f_{w,(r_v,1)},f_{w,(1,2)},..., f_{w,(r_v,2)}, ...,
f_{w,(1,t_{s,v})},...,f_{w,(r_v,t_{s,v})}).$$

When $d_v = 1$ and $r_v = 1$, one gets $t_{s,v} = 1$
and $\vec{f}_v = (n)$. In this case, we get $\vec{f}_{w,*} = (n/s)$
for all places $w$ of $L_s$ lying above 
$v$. Therefore, $$\#(\Omega_v(D/K,s,\vec{f}_v)) = 1.$$
This shows that $\Omega(D/K,s,\vec{\mathbf{f}})$ is a finite
set. 
\begin{lem}\label{41}
The set 
$\Omega_v(D/K,s,\vec{f}_v)$ is non-empty if and only if
\begin{equation}
  \label{eq:44}
  \frac{s}{\ell_{s,v} t_{s,v}}\,  {\Big |}\, f_{v,i}
\end{equation}
for $1\leq i \leq r_v$.
\end{lem}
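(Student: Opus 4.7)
Setting $\mu := s/(\ell_{s,v} t_{s,v})$, the plan is to treat the two directions separately.

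For necessity, suppose $(\vec{f}_{w,*})_{w \mid v}$ lies in $\Omega_v(D/K,s,\vec{f}_v)$. I would sum the defining identity \eqref{eq:42} over the places $w \mid v$ and compare with the second constraint in \eqref{eq:43}, which yields
\[
f_{v,i} \;=\; \sum_{w \mid v} f_{w,i} \;=\; \mu \sum_{w \mid v}\sum_{j=1}^{t_{s,v}} f_{w,(i,j)}.
\]
Since the inner double sum is a non-negative integer, this gives $\mu \mid f_{v,i}$ immediately.

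For sufficiency, assume $\mu \mid f_{v,i}$ for every $i$ and write $f_{v,i} = \mu a_i$ with $a_i \in \Z_{\ge 0}$. My plan is to exhibit an element of $\Omega_v$ by concentrating all weight in a single column: set $f_{w,(i,j)} = 0$ for $j > 1$, and seek non-negative integers $g_{w,i} := f_{w,(i,1)}$ with prescribed row marginals $\sum_{w \mid v} g_{w,i} = a_i$ and column marginals $\sum_{i=1}^{r_v} g_{w,i} = b := m_v t_{s,v}/s$ for each of the $\ell_{s,v}$ places $w \mid v$. Once both sets of marginals consist of non-negative integers with matching grand total, such $g_{w,i}$ exist by a standard transportation-matrix argument (for instance the northwest-corner rule produces one explicitly), and the resulting tuple $(\vec{f}_{w,*})$ verifies the two conditions in \eqref{eq:43} by direct computation.

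The main obstacle I anticipate is the arithmetic verification that the column sum $b$ is actually an integer, since this is a compatibility between the global divisibilities and the local definition of $\Omega_v$. For this I would argue as follows: Lemma~\ref{22} together with $s \mid s_0$ gives $\ell_{s,v} \mid m_v$, so write $m_v = \ell_{s,v} c_v$ and $\alpha := s/\ell_{s,v}$ (so that $t_{s,v} = \gcd(\alpha, d_v)$). Since $s \mid n = m_v d_v$, one has $\alpha \mid c_v d_v$, and the coprimality of $\alpha/t_{s,v}$ with $d_v/t_{s,v}$ then forces $\alpha/t_{s,v} \mid c_v$, which rewrites as $s/t_{s,v} \mid m_v$, giving $b \in \Z_{\ge 0}$. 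Equality of grand totals, $\sum_i a_i = m_v/\mu = \ell_{s,v} b$, follows at once from $\sum_i f_{v,i} = m_v$, and the remaining combinatorial step is routine.
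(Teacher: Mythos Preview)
Your proof is correct and follows the same overall structure as the paper's: necessity is immediate from summing \eqref{eq:42} over $w$, and sufficiency reduces to (i) checking that $\mu \mid m_v/\ell_{s,v}$ so that the column target $b$ is an integer, then (ii) invoking a standard existence result for non-negative integer matrices with prescribed marginals. For (ii) the paper states a separate inductive lemma that is exactly your northwest-corner/transportation argument; your choice to concentrate everything in the column $j=1$ is implicit in the paper's appeal to that lemma.

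The one genuine difference is in step (i). You verify $\mu \mid m_v/\ell_{s,v}$ by an elementary gcd/coprimality argument from $s \mid n = m_v d_v$ and $t_{s,v} = \gcd(s/\ell_{s,v}, d_v)$. The paper instead interprets $\mu = s/(\ell_{s,v} t_{s,v})$ as the residue-field degree $[\mathbb{F}_{\Delta_w'} : \mathbb{F}_{\Delta_v}]$ and deduces the divisibility from the existence of an embedding $L_{s,w} \hookrightarrow \Mat_{m_v/\ell_{s,v}}(\Delta_v)$. Your argument is more self-contained and avoids importing that algebraic identification; the paper's is shorter once one has that identification in hand. Either route is fine.
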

\begin{proof}
It is clear that the non-emptiness of 
$\Omega_v(D/K,s,\vec{f}_v)$ implies
the condition (\ref{eq:44}). 

Note that
$s/\ell_{s,v}t_{s,v}=[\F_{\Delta_w'}:\F_{\Delta_v}]$. 
Since there is an embedding $L_{s,w}\hookrightarrow 
\Mat_{m_v/\ell_{s,v}}(\Delta_v)$, 
the divisibility $(s/\ell_{s,v}t_{s,v})\mid (m_v/\ell_{s,v})$ is
automatically satisfied. Conversely, suppose the condition (\ref{eq:44})
is satisfied. Then the set $\Omega_v(D/K,s,\vec{f}_v)$ 
is non-empty by the next lemma. This proves the lemma. \qed
\end{proof}

\begin{lem}
  Let $(m_1,\dots,m_\ell)$ and $(f_1,\dots, f_r)$ be two sequences of
  non-negative integers satisfying $\sum_{w=1}^\ell m_w=\sum_{i=1}^r
  f_i$. Then the set 
\[ \Omega:=\left \{ (f_{w,i})\in \Z^{\ell}_{\ge 0}\times \Z^{r}_{\ge
  0}\, {\Big |}\   \sum_{w} f_{w,i}=f_i,\ \forall\, i, \ \text{and} \   
\sum_{i} f_{w.i}=m_w \ \forall\, w \right \} \]
is non-empty. 
\end{lem}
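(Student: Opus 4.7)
\medbreak\noindent\textbf{Proof plan.}
The claim is a standard existence statement for a non-negative integer matrix with prescribed row and column sums (a transportation problem, or contingency table), and I would prove it by induction, using the classical \emph{northwest corner rule} as the constructive device.

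The plan is to induct on $\ell + r$. The base cases are straightforward: if $\ell = 1$, then the constraints force $f_{1,i} = f_i$, and one checks this lies in $\Omega$ because $m_1 = \sum_i f_i$; the case $r = 1$ is symmetric.

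For the inductive step, set $a := \min(m_1, f_1)$ and define $f_{1,1} := a$. I would split into two cases. If $a = m_1 \le f_1$, set $f_{1,i} := 0$ for $i \ge 2$; the remaining unknowns $(f_{w,i})_{2 \le w \le \ell,\, 1 \le i \le r}$ must satisfy row sums $(m_2, \ldots, m_\ell)$ and column sums $(f_1 - m_1, f_2, \ldots, f_r)$, all non-negative, with matching totals $\sum_{w \ge 2} m_w = (f_1 - m_1) + \sum_{i \ge 2} f_i$. Applying the inductive hypothesis with $\ell - 1$ rows gives the required completion. If instead $a = f_1 \le m_1$, set $f_{w,1} := 0$ for $w \ge 2$ and solve the analogous reduced problem with $r - 1$ columns and row sums $(m_1 - f_1, m_2, \ldots, m_\ell)$.

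There is no real obstacle; the only point requiring attention is to verify that, after extracting the corner entry, the two reduced sequences still have equal sums and consist of non-negative integers, which is immediate from the case split. Since the recursion strictly decreases $\ell + r$ at each step, it terminates and produces an element of $\Omega$, proving non-emptiness.
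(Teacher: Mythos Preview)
Your proof is correct and follows essentially the same inductive strategy as the paper: the paper inducts on $\ell$ alone, filling in an entire row $(f_{\ell,1},\dots,f_{\ell,r})$ with $f_{\ell,i}\le f_i$ and $\sum_i f_{\ell,i}=m_\ell$ at each step, whereas you fill in a single corner entry via the northwest corner rule and induct on $\ell+r$. Both arguments are standard and equally valid; yours is slightly more explicit about how the reduced entries are chosen, while the paper's is marginally shorter.
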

\begin{proof}
  We prove this by induction on $\ell$. The statement holds clearly 
  when $\ell=1$. For any $\ell$, choose non-negative 
  integers  $f_{\ell,1}\dots,f_{\ell,r}$ with $f_{\ell, i}\le f_i$ for
  all $i=1,\dots, r$ and $\sum_{i} f_{\ell,i}=m_\ell$. By induction,
  the set for $(m_1,\dots, m_{\ell-1})$ and $(f_1-f_{\ell,1},
  \dots f_r-f_{\ell,r})$ is non-empty and hence the set $\Omega$ is
  non-empty. This proves the lemma. \qed     
\end{proof}


\subsection{Main results}
\label{sec:43}

For each $\vec{\mathbf{f}}_* \in \Omega(D/K, s, \vec{\mathbf{f}})$,
set 
$\vec{\mathbf{f}}_*^o:= (\vec{f}_{w,*}^o)_{w \in \Sigma_{L_s}^0}$ 
where $\vec{f}_{w,*}^o$ is the vector obtained by removing the zero
entries of the vector $\vec{f}_{w,*}$. For example, if
$\vec{f}_{w,*}=(5,0,4,1,0,1)$, then $\vec{f}_{w,*}^o=(5,4,1,1)$.
See (\ref{eq:38}) for the definition of the 
term $\text{\bf{E}}(D/K,s, \vec{\mathbf{f}})$. 


\begin{thm}\label{43}
Let $D$ be a definite central simple algebra of degree $n^2$ over
$K$. 
For each place $v$ of $K$, the local invariant $\text{\rm inv}_v (D)$
is denoted by $\kappa_v/d_v \bmod \mathbb{Z}$ and $n= m_v d_v$.  
Let $s$ be a positive divisor of $s_0=[\mathbb{F}_D:\mathbb{F}_K]$ and
$\vec{\mathbf{f}}= (\vec{f}_v)_{v \neq \infty}$, where 
for each finite place $v$ of $K$, $\vec{f}_v =
(f_{v,1},...,f_{v,r_v})$ is a vector in $\mathbb{Z}_{> 0}^{r_v}$ with
$\sum_{i=1}^{r_v} f_{v,i} = m_v$, and for almost all $v$ we have
$r_v=1$. 
Then 
\begin{equation}
  \label{eq:45}
  \text{\bf{E}}(D/K,s, \vec{\mathbf{f}}) =  
\sum_{\vec{\mathbf{f}}_* \in \Omega(D/K, s, \vec{\mathbf{f}})}
h(D'_s/L_s, \vec{\mathbf{f}}_*^o).
\end{equation}
Here
$D_s'$ is the centralizer $C_{\iota}(L_s)$ in $D$ $($for an arbitrary
fixed embedding $\iota$ of $L_{s_0}$ to $D)$, 
which is a definite central simple algebra over $L_s$,
and $h(D'_s/L_s, \vec{\mathbf{f}}_*^o)$ is the class number of
hereditary $O_{L_s}$-order $R(D'_s/L_s,\vec{\mathbf{f}}_*^o)$ in
$D'_s$. 
\end{thm}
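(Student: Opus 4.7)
The plan is to unwind $\text{\bf E}(D/K,s,\vec{\mathbf{f}})$ through the adelic machinery developed in Section~\ref{sec:33} and then match the result place-by-place against $\Omega(D/K,s,\vec{\mathbf{f}})$, with the fibers of the localization map $\Psi$ of \eqref{eq:39} providing the class numbers on the right-hand side of \eqref{eq:45}.

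First, by Lemma~\ref{32} the disjoint union $\coprod_{i=1}^{h} C_\iota(L_s)^\times \backslash \mathcal{E}_\iota(s,R_i)/R_i^\times$, whose total size is $\text{\bf E}(D/K,s,\vec{\mathbf{f}})$, is in bijection with the global adelic double coset set $C_\iota(L_s)^\times \backslash \widehat{\mathcal{E}}_\iota(s,R)/\widehat{R}^\times$. I pass through the canonical surjection $\Psi$ and decompose its base as the restricted product of local double cosets $C_\iota(L_{s,v})^\times \backslash \mathcal{E}_{v,\iota}(s,R_v)/R_v^\times$, as in \eqref{eq:310}. The key local input, to be established in Section~\ref{sec:07}, is a natural bijection
\[
C_\iota(L_{s,v})^\times \backslash \mathcal{E}_{v,\iota}(s,R_v)/R_v^\times \;\longleftrightarrow\; \Omega_v(D/K,s,\vec{f}_v)
\]
at every finite place $v$: the entries $f_{w,(i,j)}$ record how the semisimple étale $K_v$-algebra $L_{s,v}=\prod_{w\mid v}L_{s,w}$ distributes among the Jordan blocks of $R_v\cong \Mat_{m_v}(\vec{f}_v,O_{\Delta_v})$ (indexed by $i$), with the refined index $j$ keeping track of the Galois types of the residual embedding. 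The normalization \eqref{eq:42}--\eqref{eq:43} is exactly what converts these Galois-type multiplicities into block dimensions on both the $K_v$- and $L_{s,w}$-sides. Taking the product over $v$ identifies the base of $\Psi$ with $\Omega(D/K,s,\vec{\mathbf{f}})$.

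Next, for a representative $\hat g$ of the class corresponding to $\vec{\mathbf{f}}_\ast\in\Omega(D/K,s,\vec{\mathbf{f}})$, the fiber of $\Psi$ is given by \eqref{eq:395} as
\[
C_\iota(L_s)^\times \backslash C_\iota(\widehat{L}_s)^\times / \bigl(C_\iota(\widehat{L}_s)^\times \cap \hat g\,\widehat{R}^\times \hat g^{-1}\bigr),
\]
which I recognize as the set of locally free right ideal classes of the $O_{L_s}$-order $R'$ in $D'_s$ whose completion at each place $w\mid v$ is the centralizer $C_\iota(L_{s,w})\cap \hat g_v R_v \hat g_v^{-1}$. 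The second consequence of the local analysis in Section~\ref{sec:07} is that this centralizer is again a hereditary order, and its invariant is exactly $\vec{f}_{w,\ast}^o$: the zero entries of $\vec{f}_{w,\ast}$ correspond to Galois types that do not occur in the given Jordan block, hence contribute empty summands to the centralizer and are simply deleted. Therefore each fiber of $\Psi$ has cardinality $h(D'_s/L_s,\vec{\mathbf{f}}_\ast^o)$, and summation over $\Omega(D/K,s,\vec{\mathbf{f}})$ yields \eqref{eq:45}.

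The main obstacle is the local optimal-embedding computation that underlies both steps: the parametrization of $C_\iota(L_{s,v})^\times\backslash\mathcal{E}_{v,\iota}(s,R_v)/R_v^\times$ by $\Omega_v(D/K,s,\vec f_v)$, and the identification of the centralizer of $\iota(O_{L_s,v})$ in a conjugate of $R_v$ as a hereditary order of type $\vec f_{w,\ast}^o$. This requires tracking how $\iota(\mathbb{F}_{q^s})$ sits in $R_v$ modulo its Jacobson radical in coordination with the Frobenius action on $F_v\subset\Delta_v$, lifting the combinatorial picture back to the level of $O_v$-orders, and verifying that conjugation by an element of $\mathcal{E}_{v,\iota}(s,R_v)$ preserves the hereditary type recorded by $\vec f_{w,\ast}^o$. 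These assertions are precisely the content of Section~\ref{sec:07}, which is why that section is the technical heart of the paper.
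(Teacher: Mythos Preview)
Your proposal is correct and follows essentially the same approach as the paper's own proof: adelize via Lemma~\ref{32}, pass through the surjection $\Psi$ of \eqref{eq:39} and the local decomposition \eqref{eq:310}, invoke the local optimal-embedding analysis of Section~\ref{sec:07} (specifically Lemma~\ref{72}, Propositions~\ref{75} and~\ref{76}, and Theorem~\ref{710}) to identify the base with $\Omega(D/K,s,\vec{\mathbf{f}})$ and each fiber with $\Cl\bigl(R(D'_s/L_s,\vec{\mathbf{f}}_*^o)\bigr)$. The only omission is the one-line check that $D'_s$ is definite over $L_s$, which the paper handles by noting $\ell_{s,\infty}=1$.
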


\begin{proof}
Fix an embedding $\iota: L_{s_0}\hookrightarrow D$.
For any positive divisor $s$ of $s_0$, we must have 
$\ell_{s,\infty} = 1$. 
Therefore $D_s'$ is definite (with respect to the unique place of $L_s$
lying above $\infty$).
 
Let $R = R(D/K,\vec{\mathbf{f}})$ be a hereditary $A$-order in $D$
such that 
the invariant of $R$ is $\vec{\mathbf{f}}$. 
By Lemma~\ref{32}, we have
$$\text{\bf{E}}(D/K,s, \vec{\mathbf{f}}) = \# \Big(
C_{\iota}(L_s)^{\times} \backslash
\widehat{\mathcal{E}}_{\iota}(s,R)/\widehat{R}^{\times}\Big).$$ 
Consider the canonical surjective map
$$ \Psi: C_{\iota}(L_s)^{\times} \backslash
\widehat{\mathcal{E}}_{\iota}(s,R)/\widehat{R}^{\times}  
\twoheadrightarrow C_{\iota}(\widehat{L}_s)^{\times} \backslash
\widehat{\mathcal{E}}_{\iota}(s,R) / \widehat{R}^{\times}.$$ 
Note that
\begin{equation}
  C_{\iota}(\widehat{L}_s)^{\times} \backslash
\widehat{\mathcal{E}}_{\iota}(s,R) / 
\widehat{R}^{\times}=
\prod_{v \neq \infty} C_{\iota}(L_{s,v})^{\times} \backslash
  \mathcal{E}_{v,\iota}(s,R_v) 
/R_v^{\times},
\end{equation}
By Lemma~\ref{72}, Propositions~\ref{75} and \ref{76} (1), and 
Theorem \ref{710} (1), we have a natural bijection
\[ C_{\iota}(L_{s,v})^{\times} \backslash
  \mathcal{E}_{v,\iota}(s,R_v) 
/R_v^{\times} \simeq \Omega_v(D/K,s,\vec{f}_v). \]
From (\ref{eq:41}) we have a natural bijection
\begin{equation}
  \label{eq:46}
  C_{\iota}(\widehat{L}_s)^{\times} \backslash
\widehat{\mathcal{E}}_{\iota}(s,R) / \widehat{R}^{\times}
\simeq \Omega(D/K, s,\vec{\mathbf{f}}).
\end{equation}

Let $[\hat{g}]$ be the double coset corresponding to a given
$\vec{\mathbf{f}}_* \in \Omega(D/K,s,\vec{\mathbf{f}})$. By Propositions
\ref{75}, \ref{76} (2), and Theorem \ref{710} (2), we have 
$$C_{\iota}(L_s) \cap \hat{g}\widehat{R} \hat{g}^{-1}= R(D'_s/L_s,
\vec{\mathbf{f}}_*^o),$$ 
which is a hereditary $O_{L_s}$-order in $D'_s$.
Therefore 
\begin{equation}
  \label{eq:465}
  \begin{split}
   \#\Psi^{-1}([\hat g]) & =\#\bigg(C_{\iota}(L_s)^{\times}\backslash
C_{\iota}(\widehat{L}_s)^{\times}/
\big(C_{\iota}(\widehat{L}_s)^{\times} 
\cap \hat{g} \widehat{R}^{\times} \hat{g}^{-1}\big)\bigg) \\ 
 & = h(D'_s/L_s, \vec{\mathbf{f}}_*^o). 
  \end{split}
\end{equation}
$$ $$ 
This completes the proof of the theorem. \qed
\end{proof}

\begin{thm}\label{435}
  Notations being as above. There is an optimal embedding of
  $O_{L_s}$ into a hereditary order $R'$ in $D$ of invariant
  $\vec{\bff}$ if and only if for all $v\in \Sigma_K^0$, one has
\begin{equation}
  \label{eq:47}
  \frac{s}{\ell_{s,v} t_{s,v}}\, {\Big |}\, f_{v,i}
\end{equation}
for $1\leq i \leq r_v$.
\end{thm}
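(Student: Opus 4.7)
The plan is to derive Theorem~\ref{435} as a direct consequence of Theorem~\ref{43} combined with Lemma~\ref{41}. The starting observation is that the existence question can be rephrased as a positivity statement. Fix a hereditary $A$-order $R$ in $D$ of invariant $\vec{\bff}$; by the local description in Section~\ref{sec:22}, every hereditary order $R'$ with the same invariant is locally conjugate to $R$, hence lies in the genus of $R$, and is therefore $D^{\times}$-conjugate to one of the left orders $R_{i}$ of the representatives $I_{1},\dots,I_{h}$ of the locally free right ideal classes of $R$. Consequently, there exists a hereditary order $R'$ of invariant $\vec{\bff}$ admitting an optimal embedding of $O_{L_{s}}$ if and only if $\text{\bf{E}}(D/K,s,\vec{\bff}) > 0$.

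Next, I would apply Theorem~\ref{43}:
\[
\text{\bf{E}}(D/K,s,\vec{\bff}) \;=\; \sum_{\vec{\bff}_{*} \in \Omega(D/K,s,\vec{\bff})} h(D'_{s}/L_{s},\vec{\bff}_{*}^{o}).
\]
Every summand is a positive integer, since the class number of any hereditary order is at least $1$ (the identity ideal class). Hence the left-hand side is positive if and only if the index set $\Omega(D/K,s,\vec{\bff})$ is non-empty. By definition $\Omega(D/K,s,\vec{\bff}) = \prod_{v \in \Sigma_{K}^{0}} \Omega_{v}(D/K,s,\vec{f}_{v})$, so non-emptiness is equivalent to each local factor $\Omega_{v}(D/K,s,\vec{f}_{v})$ being non-empty. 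Lemma~\ref{41} characterises this local condition as $\frac{s}{\ell_{s,v} t_{s,v}} \mid f_{v,i}$ for all $1 \le i \le r_{v}$, which is precisely the condition~\eqref{eq:47}. Chaining these equivalences yields Theorem~\ref{435}.

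The main obstruction does not lie in Theorem~\ref{435} itself but in its two ingredients: Theorem~\ref{43} rests on the local-global bijection~\eqref{eq:46} and on the detailed combinatorial description of local optimal embeddings developed in Section~\ref{sec:07}, and the non-trivial direction of Lemma~\ref{41} similarly relies on that local analysis. Once both are in hand, Theorem~\ref{435} follows essentially formally. The only point requiring a brief justification is the reduction of the generic existence statement to the positivity of $\text{\bf{E}}(D/K,s,\vec{\bff})$ for the fixed chosen $R$, which, as indicated above, is settled by the genus structure of hereditary orders of a given invariant.
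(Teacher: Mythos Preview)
Your proof is correct and follows essentially the same route as the paper. The paper's one-line proof cites the bijection~\eqref{eq:46} (established inside the proof of Theorem~\ref{43}) together with Lemma~\ref{41}; you instead invoke the full statement of Theorem~\ref{43} and then Lemma~\ref{41}, which is a marginally heavier but entirely equivalent packaging of the same argument, and your explicit genus discussion spells out a step the paper leaves implicit.
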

\begin{proof}
  This follows from (\ref{eq:46}) and Lemma~\ref{41}. \qed
\end{proof}

It is clear that for each positive divisor $s$ of
$s_0=[\mathbb{F}_D:\mathbb{F}_q]$, one has $$[\mathbb{F}_{D_s'} :
\mathbb{F}_{q^s}] = s_0/s.$$ 
Theorem~\ref{43} (and (\ref{eq:36})) tells us that
\begin{equation}
    \label{eq:476}
    \begin{split}
s \cdot \sum_{s': s|s'| s_0} h_{s'}(D/K,\vec{\mathbf{f}})
&= \sum_{\vec{\mathbf{f}}_* \in \Omega(D/K,s,\vec{\mathbf{f}})}
h(D_s'/L_s,\vec{\mathbf{f}}_*^o)  \\
&=\sum_{s': s|s'|s_0}\left(\sum_{\vec{\mathbf{f}}_* \in
    \Omega(D/K,s,\vec{\mathbf{f}})} h_{s'/s}
  (D_s'/L_s,\vec{\mathbf{f}}_*^o) \right).         
    \end{split}
\end{equation}


The following is one of main theorems of this paper, which is a
refinement of the relation (\ref{eq:476}).

\begin{thm}\label{44}
\text{\rm (The generalized transfer principle)}
For any two positive divisors $s$ and $s'$ of $s_0$ 
with $s \mid s'$, we
have  
\begin{equation}
  \label{eq:48}
  s \cdot h_{s'}(D/K,\vec{\mathbf{f}}) = \sum_{\vec{\mathbf{f}}_* \in
  \Omega(D/K,s,\vec{\mathbf{f}})}
h_{s'/s}(D_s'/L_s,\vec{\mathbf{f}}_*^o).
\end{equation}
\end{thm}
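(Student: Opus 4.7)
The plan is to refine the bijection constructed in the proof of Theorem~\ref{43} so that it preserves the isomorphism type of the unit groups of the corresponding left orders. Once such a weight-preserving bijection is in place, the statement follows by restricting both sides of the identity (\ref{eq:476}) to their respective weight-$s'$ strata and matching them term by term. Concretely, for each class $[\hat g']$ in $C_\iota(L_s)^\times\backslash \widehat{\mathcal E}_\iota(s,R)/\widehat R^\times$, I will exhibit a canonical isomorphism between the cyclic unit group $R_{i_{\hat g'}}^\times$ (on the $D$-side) and the cyclic unit group $R_j'^\times$ of the associated left order (on the $D_s'$-side), and use it to match $\F_{q^{s'}}$-weighted strata on the $D$-side with $\F_{q^{s'/s}}$-weighted strata on the $D_s'$-side.

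Following the chain in the proof of Theorem~\ref{43} and writing $\hat g'=b_{\hat g'}g_{i_{\hat g'}}\hat\gamma_{\hat g'}$ as in Lemma~\ref{32}, the class $[\hat g']$ corresponds on the $D$-side to the optimal embedding $b_{\hat g'}^{-1}\iota\colon O_{L_s}\hookrightarrow R_{i_{\hat g'}}$ and on the $D_s'$-side to a locally free right ideal of $R'(\vec{\mathbf f}_*^o)$ whose left order is $R_j'=C_\iota(L_s)\cap \hat g'\widehat R\hat g'^{-1}=C_\iota(L_s)\cap b_{\hat g'}R_{i_{\hat g'}}b_{\hat g'}^{-1}$. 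By Lemma~\ref{21}, $R_{i_{\hat g'}}^\times$ generates a finite, hence commutative, subfield $\F_q[R_{i_{\hat g'}}^\times]\cong \F_{q^{s_i}}$ of $D$. Optimality of the embedding gives $b_{\hat g'}^{-1}\iota(\F_{q^s})\subset R_{i_{\hat g'}}$, so $\iota(\F_{q^s})$ lies in the commutative finite field $b_{\hat g'}\F_q[R_{i_{\hat g'}}^\times]b_{\hat g'}^{-1}$; in particular $s\mid s_i$, and the cyclic group $b_{\hat g'}R_{i_{\hat g'}}^\times b_{\hat g'}^{-1}$ centralizes $\iota(L_s)$. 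Hence $b_{\hat g'}R_{i_{\hat g'}}^\times b_{\hat g'}^{-1}\subseteq R_j'^\times$; the reverse inclusion is immediate from $R_j'\subseteq b_{\hat g'}R_{i_{\hat g'}}b_{\hat g'}^{-1}$, and together they give $R_j'^\times\cong R_{i_{\hat g'}}^\times\cong \F_{q^{s_i}}^\times$. Consequently $R_{i_{\hat g'}}$ has weight $s'$ over $K$ if and only if $R_j'$ has weight $s'/s$ over $L_s$.

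With this weight-preserving refinement, the theorem follows by bookkeeping. Every $R_i$ with $R_i^\times\cong \F_{q^{s'}}^\times$ (which forces $s\mid s'$, since $s\mid s_0$) contains the unique cyclic subgroup of order $q^s-1$, hence $A[\F_{q^s}]=O_{L_s}$ embeds optimally into $R_i$, yielding exactly $s$ optimal embeddings. Thus $s\cdot h_{s'}(D/K,\vec{\mathbf f})$ equals the cardinality of the weight-$s'$ stratum of $\coprod_i C_\iota(L_s)^\times\backslash\mathcal E_\iota(s,R_i)/R_i^\times$. Under the refined bijection this stratum is in bijection with the disjoint union over $\vec{\mathbf f}_*\in \Omega(D/K,s,\vec{\mathbf f})$ of the weight-$(s'/s)$ right ideal classes of $R'(\vec{\mathbf f}_*^o)$, whose total cardinality is the right side of (\ref{eq:48}). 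The main technical hurdle is establishing the inclusion $b_{\hat g'}R_{i_{\hat g'}}^\times b_{\hat g'}^{-1}\subseteq C_\iota(L_s)^\times$, since conjugation by $b_{\hat g'}$ need not respect commutation relations a priori; the crucial input is that $\iota(\F_{q^s})$ actually lies inside the commutative finite field generated by $b_{\hat g'}R_{i_{\hat g'}}^\times b_{\hat g'}^{-1}$, which forces centralization of the whole cyclic group.
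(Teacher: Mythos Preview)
Your proof is correct and takes a genuinely different route from the paper's. The paper argues by induction on the depth $\mu(D/K,s)$: it first reduces the general case $s\mid s'$ to the diagonal case $s=s'$ via the decomposition $\Omega(D/K,s',\vec{\mathbf f})=\coprod_{\vec{\mathbf g}_*\in\Omega(D/K,s,\vec{\mathbf f})}\Omega(D_s'/L_s,s'/s,\vec{\mathbf g}_*^o)$, and then establishes the diagonal case by applying Theorem~\ref{43} and subtracting off the higher-weight contributions using the induction hypothesis. You instead refine the bijection underlying Theorem~\ref{43} to be weight-preserving in a single step, by proving the group-theoretic equality $b_{\hat g'}R_{i_{\hat g'}}^\times b_{\hat g'}^{-1}=R_j'^\times$ inside $D^\times$; the crucial point---that $b_{\hat g'}^{-1}\iota(\F_{q^s})b_{\hat g'}$ lands in the commutative finite field $\F_q[R_{i_{\hat g'}}^\times]$ and hence is centralized by $R_{i_{\hat g'}}^\times$---is exactly what makes this work and is correctly identified. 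Your argument is more conceptual and explains structurally why the identity holds, avoiding both the induction and the $\Omega$-decomposition; the paper's approach, though more circuitous, makes the nested combinatorics of the index sets $\Omega$ explicit, which feeds directly into the effective local computations of Theorems~\ref{45}--\ref{46} and Proposition~\ref{48}.
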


\begin{proof}
For positive divisors $s$ and $s'$ of $s_0$ with $s \mid s'$, one
observes that 
$$\Omega(D/K,s', \vec{\mathbf{f}})
= \coprod_{\vec{\mathbf{g}}_* \in \Omega(D/K,s,\vec{\mathbf{f}})}
\Omega(D_s'/L_s,\frac{s'}{s}, \vec{\mathbf{g}}_*^o).$$ 
Suppose the statement holds in the case  $s = s'$, that is, the relation
\begin{equation}
\label{eq4.1}
s \cdot h_{s}(D/K,\vec{\mathbf{f}}) = \sum_{\vec{\mathbf{f}}_* \in
  \Omega(D/K,s,\vec{\mathbf{f}})} h_{1}(D_s'/L_s,\vec{\mathbf{f}}_*^o) 
\end{equation}
holds for any positive divisor $s$ of $s_0$.
Then
$$ s \cdot h_{s'}(D/K,\vec{\mathbf{f}}) 
= \frac{s}{s'} \cdot \left(s'\cdot
  h_{s'}(D/K,\vec{\mathbf{f}})\right) 
= \frac{s}{s'} \cdot \sum_{\vec{\mathbf{f}}_* \in
  \Omega(D/K,s',\vec{\mathbf{f}})}
h_{1}(D_{s'}'/L_{s'},\vec{\mathbf{f}}_*^o).$$ 
On the other hand, one has
\begin{equation}\nonumber
\begin{split}
  \sum_{\vec{\mathbf{g}}_* \in \Omega(D/K,s,\vec{\mathbf{f}})}
& h_{s'/s}(D_s'/L_s,\vec{\mathbf{g}}_*^o) \\
& = \frac{s}{s'} \sum_{\vec{\mathbf{g}}_* \in
  \Omega(D/K,s,\vec{\mathbf{f}})} 
\left(\sum_{\vec{\mathbf{f}}_* \in \Omega(D_s'/L_s, s'/s,
    \vec{\mathbf{g}}_*^o)} h_{1}
  (D_{s'}'/L_{s'},\vec{\mathbf{f}}_*^o)\right).    
\end{split}
\end{equation}


Therefore to complete the proof, it suffices to prove the equality
(\ref{eq4.1}). 
We prove this by induction on the number $\mu(D/K,s):=\sum_{i} n_i$,
where $[\mathbb{F}_D:\mathbb{F}_K]/s=\prod_{i} p_i^{n_i}$ is the prime
decomposition. 

Note that $[\mathbb{F}_{D_s'}:\mathbb{F}_{L_s}] = s_0/s$ for any
positive divisor $s$ of $s_0 = [\mathbb{F}_D:\mathbb{F}_K]$.  
Therefore (\ref{eq4.1}) holds for $\mu(D/K,s) = 0$, i.e.\ $s = s_0 =
[\mathbb{F}_D:\mathbb{F}_K]$. Indeed, by Theorem \ref{43} we have 
\begin{equation}\nonumber
  \begin{split}
  s_0 \cdot h_{s_0}(D/K,\vec{\mathbf{f}}) & = \text{\bf
  E}(D/K,s_0,\vec{\mathbf{f}}) \\
& = \sum_{\vec{\mathbf{f}}_* \in
  \Omega(D/K,s_0,\vec{\mathbf{f}})}
  h(D_{s_0}'/L_{s_0},\vec{\mathbf{f}}_*^o) \\
& = \sum_{\vec{\mathbf{f}}_* \in \Omega(D/K,s_0,\vec{\mathbf{f}})}
h_1(D_{s_0}'/L_{s_0},\vec{\mathbf{f}}_*^o).  
  \end{split}
\end{equation}

Suppose the equality (\ref{eq4.1}) holds 
for any pair $(D''/K'',s'')$ with
$$\mu(D''/K'',s'')<\mu(D/K,s).$$ 
Then we get
\begin{eqnarray}
\label{eq4.2} \text{\bf E}(D/K,s,\vec{\mathbf{f}})
&=& s \cdot \sum_{s': s \mid s' \mid s_0} h_{s'}(D/K,\vec{\mathbf{f}})
 \\ 
 &=& s \cdot h_s(D/K,\vec{\mathbf{f}}) + 
 \sum_{s': s \mid s' \mid s_0 \atop s< s'} \frac{s}{s'} \left(s' \cdot
 h_{s'}(D/K,\vec{\mathbf{f}})\right). \nonumber 
\end{eqnarray}
Since $\mu(D/K,s')<\mu(D/K,s)$ for any positive divisor $s'$ of $s_0$
with $s|s'$ and $s< s'$, by induction hypothesis, one has 
\begin{equation}
\label{eq4.3}
\sum_{s': s \mid s' \mid s_0 \atop s< s'} \frac{s}{s'} \left(s' \cdot
  h_{s'}(D/K,\vec{\mathbf{f}})\right) =  
\sum_{s': s \mid s' \mid s_0 \atop s< s'} \frac{s}{s'}
\left(
\sum_{\vec{\mathbf{f}}_* \in \Omega(D/K,s',\vec{\mathbf{f}})} 
h_1(D_{s'}'/L_{s'},\vec{\mathbf{f}}_*^o)\right).
\end{equation}
On the other hand, by Theorem~\ref{43} we get
\begin{eqnarray}
\label{eq4.4} & &\text{\bf E}(D/K,s,\vec{\mathbf{f}}) \\
&=& \sum_{\vec{\mathbf{f}}_* \in \Omega(D/K, s, \vec{\mathbf{f}})}
\left(\sum_{s': s \mid s' \mid s_0} h_{s'/s}(D'_s/L_s,
  \vec{\mathbf{f}}_*^o)\right) \nonumber \\ 
&=& \sum_{\vec{\mathbf{f}}_* \in \Omega(D/K, s, \vec{\mathbf{f}})}
h_{1}(D'_s/L_s, \vec{\mathbf{f}}_*^o) + 
\sum_{s': s \mid s' \mid s_0, \atop s<s'}
\left(\sum_{\vec{\mathbf{g}}_* \in \Omega(D/K, s, \vec{\mathbf{f}})}
  h_{s'/s}(D'_s/L_s, \vec{\mathbf{g}}_*^o)\right). \nonumber  
\end{eqnarray}
Since $\mu(D'_s/L_s, s'/s) < \mu(D/K,s)$ for any positive divisor $s'$
of $s_0$ with $s|s'$ and $s< s'$, by induction hypothesis again we
obtain that 
\begin{eqnarray}
\label{eq4.5}& & \sum_{s': s \mid s' \mid s_0, \atop s<s'}
\left(\sum_{\vec{\mathbf{g}}_* \in \Omega(D/K, s, \vec{\mathbf{f}})}
  h_{s'/s}(D'_s/L_s, \vec{\mathbf{g}}_*^o)\right) \\ 
&=&
\sum_{s': s \mid s' \mid s_0, \atop s<s'}\frac{s}{s'}\cdot
  \left(\sum_{\vec{\mathbf{g}}_* \in \Omega(D/K, s,
  \vec{\mathbf{f}})}\left[ 
\sum_{\vec{\mathbf{f}}_* \in
  \Omega(D_s'/L_s,s'/s,\vec{\mathbf{g}}_*^o)} 
h_1(D_{s'}'/L_{s'},\vec{\mathbf{f}}_*^o)\right]\right) \nonumber \\ 
&=& \sum_{s': s \mid s' \mid s_0 \atop s< s'} \frac{s}{s'}
\left(
\sum_{\vec{\mathbf{f}}_* \in \Omega(D/K,s',\vec{\mathbf{f}})}
h_1(D_{s'}'/L_{s'},\vec{\mathbf{f}}_*^o)\right). \nonumber 
\end{eqnarray}
Then the equation (\ref{eq4.1}) follows from (\ref{eq4.2}) --
(\ref{eq4.5}).
This completes the proof of the theorem. \qed
\end{proof}

Recall that
$$\text{Mass}(D/K,\vec{\mathbf{f}}) = \sum_{s \mid
  [\mathbb{F}_D:\F_K]} \frac{h_s(D/K,\vec{\mathbf{f}})}{q^s-1}.$$ 
Theorem~\ref{44} states that one can compute the weight-$s'$ class
number $h_{s'}(D/K,\vec{\mathbf{f}})$ in terms of the weight-$s'/s$
class numbers of a central simple algebra $D'_s/L_s$ of smaller
degree. Then by induction one can compute the weight-$s'$ class
number $h_{s'}(D/K,\vec{\mathbf{f}})$. However, since the index set
$\Omega(D/K,s,\vec{\mathbf{f}})$ could be quite large, the computation
by induction would be very complicated. The following theorem states
that we can compute directly a modified version of the right hand side
of the equation (\ref{eq:48}) in Theorem~\ref{44} 
without going through the induction step,
but keep the same information so that we can compute the weight-$s'$
class number $h_{s'}(D/K,\vec{\mathbf{f}})$. This simplifies the
computation significantly.    

\begin{thm}\label{45}
Notations being as above, one has
\begin{equation}
  \label{eq:414}
  s \cdot \sum_{s': s \mid s' \mid [\F_D:\F_K]}
\frac{h_{s'}(D/K,\vec{\mathbf{f}})}{q^{s'}-1} 
 = \sum_{\vec{\mathbf{f}}_* \in \Omega(D/K,s,\vec{\mathbf{f}})}
\text{\rm Mass}(D_s'/L_s,\vec{\mathbf{f}}_*^o).
\end{equation}
\end{thm}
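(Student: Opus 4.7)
The plan is to derive Theorem~\ref{45} as a direct consequence of the generalized transfer principle (Theorem~\ref{44}) combined with the definition of the mass. First I would take the identity (\ref{eq:48}) for every divisor $s'$ of $s_0 = [\F_D:\F_K]$ satisfying $s \mid s'$, divide both sides by $q^{s'}-1$, and then sum over all such $s'$. This yields
\begin{equation*}
s \cdot \sum_{s': s \mid s' \mid s_0} \frac{h_{s'}(D/K,\vec{\mathbf{f}})}{q^{s'}-1} = \sum_{s': s \mid s' \mid s_0}\ \sum_{\vec{\mathbf{f}}_* \in \Omega(D/K,s,\vec{\mathbf{f}})} \frac{h_{s'/s}(D'_s/L_s,\vec{\mathbf{f}}_*^o)}{q^{s'}-1}.
\end{equation*}

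Next I would interchange the two summations on the right and perform the change of variable $s'' := s'/s$. As $s'$ ranges over divisors of $s_0$ with $s \mid s'$, the ratio $s''$ ranges over all divisors of $s_0/s$, and moreover $q^{s'}-1 = (q^s)^{s''}-1$. Since $L_s$ has constant field $\F_{q^s}$ and $[\F_{D'_s}:\F_{q^s}] = s_0/s$ (as recorded just before Theorem~\ref{44}), the inner sum
\begin{equation*}
\sum_{s'' \mid s_0/s} \frac{h_{s''}(D'_s/L_s,\vec{\mathbf{f}}_*^o)}{(q^s)^{s''}-1}
\end{equation*}
is exactly the mass $\text{Mass}(D'_s/L_s,\vec{\mathbf{f}}_*^o)$, by the mass decomposition (\ref{eq:34}) applied to the definite central simple algebra $D'_s$ over $L_s$ and to the hereditary $O_{L_s}$-order of invariant $\vec{\mathbf{f}}_*^o$.

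Combining the two displays immediately gives (\ref{eq:414}). There is essentially no obstacle in this proof: the content is entirely packed into Theorem~\ref{44}, and the remaining work is purely a repackaging via the bijection $s' \leftrightarrow s''$ between $\{s' : s \mid s' \mid s_0\}$ and $\{s'' : s'' \mid s_0/s\}$, together with the arithmetic identity $q^{s'}-1 = (q^s)^{s'/s}-1$ which is precisely what allows the weighted sum of weight-$s''$ class numbers of $D'_s/L_s$ to reassemble into the mass over $L_s$. The only mild bookkeeping point is to verify that $\Omega(D/K,s,\vec{\mathbf{f}})$ does not depend on $s'$ (it depends only on $s$ and $\vec{\mathbf{f}}$), which legitimizes pulling the $\vec{\mathbf{f}}_*$-sum outside the $s'$-sum before applying the definition of the mass.
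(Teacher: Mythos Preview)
Your proof is correct and follows exactly the same approach as the paper: the paper's proof is the single sentence ``This is obtained by multiplying the factor $(q^{s'}-1)^{-1}$ on (\ref{eq:48}) and summing over all positive integers $s'$ with $s|s'|s_0$,'' and you have simply spelled out the routine details (the interchange of sums, the substitution $s''=s'/s$, and the identification with the mass via (\ref{eq:34})) that the paper leaves implicit.
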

\begin{proof}
This is obtained by multiplying the factor 
$(q^{s'}-1)^{-1}$ on (\ref{eq:48})
and summing over all positive integers $s'$ with $s|s'|s_0$. \qed  
\end{proof}

Set $\text{Mass}(D/K):=
\text{Mass}(D/K,\vec{\mathbf{f}}_{\text{max}})$ where
$\vec{\mathbf{f}}_{\text{max}}$ is the invariant of a maximal
$A$-order in $D$. 
Then the mass formula in Theorem~\ref{31} says that
$$\text{Mass}(D/K,\vec{\mathbf{f}}) = \text{Mass}(D/K) 
  \cdot \prod_{v  \in \Sigma_K^0} 
\mathcal{T'}_v(D/K,\vec{f}_v),$$ 
where for a vector $\vec{h}_v = (h_{v,1},...,h_{v,r_v}) \in
\mathbb{Z}_{\geq 0}^{r_v}$, we set
\begin{equation}
  \label{eq:418}
  \mathcal{T'}_v(D/K,\vec{h}_v):=
\frac{\prod\limits_{i=1}\limits^{m_v} \big(N(v)^{d_v
    i}-1\big)}{\prod\limits_{i=1}\limits^{r_v}
  \left(\prod\limits_{j=1}\limits^{h_{v,i}}\big(N(v)^{d_v
      j}-1\big)\right)}.
\end{equation}
For each finite place $v$ of $K$, define
\begin{equation}
  \label{eq:419}
  \Theta_v (D/K,s,\vec{f}_v):= \sum_{(\vec{f}_{w,*})_{w \mid v} \in
  \Omega_v(D/K,s,\vec{f}_v)} \left(\prod_{w \mid v}
  \mathcal{T'}_w(D_s'/L_s,\vec{f}_{w,*})\right).
\end{equation}
It is clear that $\Theta_v(D/K,s,\vec{f}_v) = 1$ if $d_v = 1$ and
$\vec{f}_v = (n)$. 
Therefore we can rewrite Theorem~\ref{45} as the following theorem, 
which reduces the computation in purely local terms.

\begin{thm}\label{46}
Notations being as above, one has
\begin{equation}
  \label{eq:420}
  s \cdot \sum_{s': s \mid s' \mid [\F_D:\F_K]}
\frac{h_{s'}(D/K,\vec{\mathbf{f}})}{q^{s'}-1} 
= \text{\rm Mass}(D_s'/L_s) \cdot \prod_{v \in \Sigma_K^0}
\Theta_v(D/K,s,\vec{f}_v).
\end{equation}
\end{thm}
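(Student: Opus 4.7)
The plan is to derive Theorem~\ref{46} as a purely formal consequence of Theorem~\ref{45}, the mass formula (Theorem~\ref{31}), and the product decomposition $\Omega(D/K,s,\vec{\mathbf{f}}) = \prod_{v \neq \infty} \Omega_v(D/K,s,\vec{f}_v)$ of (\ref{eq:41}). All the arithmetic content is already packaged into these three inputs; what remains is to reorganize a sum of products into a product of sums.

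First I would apply Theorem~\ref{31} to the hereditary $O_{L_s}$-order $R'(\vec{\mathbf{f}}_*^o)$ in $D_s'$, which gives
\begin{equation*}
\text{Mass}(D_s'/L_s,\vec{\mathbf{f}}_*^o) = \text{Mass}(D_s'/L_s) \cdot \prod_{w \in \Sigma_{L_s}^0} \mathcal{T}'_w(D_s'/L_s, \vec{f}_{w,*}^o).
\end{equation*}
Here one needs to verify a small but essential compatibility: removing zero entries of $\vec{f}_{w,*}$ does not change the value of $\mathcal{T}'_w$. This is immediate from (\ref{eq:418}), because any $h_{w,i}=0$ contributes an empty product $\prod_{j=1}^{0}(\cdot)=1$ in the denominator, while the numerator depends only on the local capacity $m_w = \sum_{i,j} f_{w,(i,j)}$, which is unchanged. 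Hence $\mathcal{T}'_w(D_s'/L_s, \vec{f}_{w,*}^o) = \mathcal{T}'_w(D_s'/L_s, \vec{f}_{w,*})$, and for almost all $w$ (equivalently, for almost all $v$) this factor is $1$, so the product converges.

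Next I would substitute into the right-hand side of (\ref{eq:414}) and use the product structure of $\Omega$. Since every $w \in \Sigma_{L_s}^0$ lies above a unique $v \in \Sigma_K^0$, and since $\vec{\mathbf{f}}_* \in \Omega(D/K,s,\vec{\mathbf{f}})$ decomposes uniquely as a tuple of local pieces $(\vec{f}_{w,*})_{w|v} \in \Omega_v(D/K,s,\vec{f}_v)$ indexed by $v$, a standard distributivity argument yields
\begin{equation*}
\sum_{\vec{\mathbf{f}}_* \in \Omega(D/K,s,\vec{\mathbf{f}})} \prod_{w \in \Sigma_{L_s}^0} \mathcal{T}'_w(D_s'/L_s, \vec{f}_{w,*}) = \prod_{v \in \Sigma_K^0} \left( \sum_{(\vec{f}_{w,*})_{w|v} \in \Omega_v(D/K,s,\vec{f}_v)} \prod_{w|v} \mathcal{T}'_w(D_s'/L_s, \vec{f}_{w,*}) \right),
\end{equation*}
and the factor on the right is precisely $\Theta_v(D/K,s,\vec{f}_v)$ by (\ref{eq:419}). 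Pulling the constant $\text{Mass}(D_s'/L_s)$ out of the sum over $\Omega$ and combining with Theorem~\ref{45} then gives (\ref{eq:420}).

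There is no real obstacle; the only point requiring some care is the zero-entry bookkeeping above, plus the observation that $\Theta_v = 1$ whenever $\Omega_v$ is a singleton with trivial $\mathcal{T}'_w$-factor (which happens at all $v \notin S \cup S'$ with $d_v=1$ and $r_v = 1$), so that the infinite product $\prod_{v} \Theta_v$ reduces to a finite one. This matches the finiteness assertion made just after (\ref{eq:13}) and ensures the statement is well-posed.
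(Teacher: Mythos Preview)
Your proposal is correct and matches the paper's approach exactly: the paper simply states that Theorem~\ref{46} is a rewriting of Theorem~\ref{45} via the factorization $\text{Mass}(D_s'/L_s,\vec{\mathbf{f}}_*^o) = \text{Mass}(D_s'/L_s)\cdot \prod_{w}\mathcal{T}'_w(D_s'/L_s,\vec{f}_{w,*})$ and the product structure of $\Omega$, without spelling out the details. Your explicit check that zero entries contribute trivially to $\mathcal{T}'_w$ and that $\Theta_v = 1$ for almost all $v$ is exactly the bookkeeping the paper leaves implicit.
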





\subsection{Explicit computation of $\Theta_v(D/K,s,\vec{f}_v)$}
In this subsection we give a simple method to compute the term  
$\Theta_v(D/K,s,\vec{f}_v)$ effectively.

Fix a finite place $v$ of $K$.
Recall that $D_v = D \otimes_K K_v \cong \text{Mat}_{m_v}(\Delta_v)$
and $\Delta_v$ is a central division algebra over $K_v$ with
$[\Delta_v:K_v] = d_v^2$. 
For any positive divisor $s$ of $[\F_D:\F_K]$,
recall $\ell_{s,v} := \text{gcd}(s,\deg v)$ and $t_{s,v} :=
\text{gcd}(s/\ell_{s,v}, d_v)$. We have $L_{s,v}=\prod_{w=1}^{\ell_{s,v}}
  L_w$, where each $L_w$ is a unramified extension over $K_v$ of
  degree $s/\ell_{s,v}$, and $\Delta_v\otimes_{K_v} L_w\simeq
  \Mat_{t_{s,v}}(\Delta_w')$. The division algebra $\Delta_w'$ has
  degree $d'_v:=d_v/t_{s,v}$ over $L_w$ and its residue field
  $\F_{\Delta_w'}$ has cardinality $N(v)^{d'_v s/\ell_{s,v}}$.

We assume that the set 
$\Omega_v(D/K,s,\vec{f}_v)$ is non-empty. By Lemma~\ref{41}, this is
equivalent to the condition  
$$ \frac{s}{\ell_{s,v} t_{s,v}}\, {\Big |}\, f_{v,i}, \quad \forall\,1\leq
i \leq r_v.$$ 

Since $s$ divides $[\F_D:\F_K]$, one gets the divisibility
$$\frac{s}{\ell_{s,v} t_{s,v}} \,{\Big |} 
\left( \frac{m_v}{\ell_{s,v}}\right ).$$ 


The completion $D'_s\otimes_{L_s}  L_{s,v}$ of the division algebra
$D_s'$ at $v$ is the centralizer of $L_{s,v}$
in $D_v$, which is isomorphic to 
\begin{equation}
  \label{eq:421}
  \prod_{w=1}^{\ell_{s,v}} \Mat_{m_{v}^{(s)}}(\Delta'_w),
\end{equation}
where
$$m_{v}^{(s)}= \left (\frac{s}{\ell_{s,v} t_{s,v}} \right )^{-1} 
\cdot \frac{m_v}{\ell_{s,v}}= \frac{m_v t_{s,v}}{s}. $$
Set 
$$f_{v,i}^{(s)}:=\left (\frac{s}{\ell_{s,v} t_{s,v}} \right )^{-1}  
 \cdot f_{v,i}\in \bbN, \quad \text{for} \ 
1\leq i \leq r_v.$$ 
Then the set 
$\Omega_v:=\Omega_v(D/K,s,\vec{f}_v)$ consists of all elements 
\[ (f_{w,(i,j)})_{w, i,j}  
\in \Z_{\ge 0}^{\ell_{s,v}}\times 
\Z_{\ge 0}^{r_v}\times \Z_{\ge 0}^{t_{s,v}}
\] 
that satisfy the following conditions
\begin{equation}
  \label{eq:422}
  \sum_{ i,j} f_{w,(i,j)} =
m_v^{(s)}, \text{ }\forall\, 1 \le w \le \ell_{s,v} \quad\text{and} 
\quad \sum_{w, j} f_{w,(i,j)} = f_{v,i}^{(s)}, \text{ }
\forall\,\, 1\leq i \leq r_v.
\end{equation}

Consider the following formal power series
$$F(T) := 1+ a_1 T + a_2 T^2 + \cdots + a_{\nu} T^{\nu} + \cdots \in
\mathbb{Q}[[T]],$$ 
where
\begin{equation}
  \label{eq:423}
a_{\nu}:= 
\prod_{k=1}^{\nu} [N(v)^{(d'_v s/\ell_{v,s})\cdot k} -1]^{-1}.  
\end{equation}

Put 
\[ 
G(\ul X, \ul Y, \ul Z):=\prod_{w=1}^ {\ell_{s,v}}\prod_{i=1}^{r_v} 
\prod_{j=1}^{t_{s,v}} 
F(X_w \cdot Y_i \cdot Z_j) \in \mathbb{Q}[[\ul X,
  \ul Y, \ul Z ]], \]
where $\ul X=(X_1,\dots,X_{\ell_{s,v}})$, $\ul Y=( Y_1,\dots,Y_{r_v})$,
  and $\ul Z=(Z_1,\dots,Z_{t_{s,v}})$. 
Then we can use the generating function $G$ to compute the term 
$\Theta_v(D/K,s,\vec{f}_v)$.

\begin{prop}\label{48}
The coefficient of the monomial
$$X_1^{m_v^{(s)}}\cdots X_{\ell_{s,v}}^{m_v^{(s)}}
Y_1^{f_{v,1}^{(s)}}\cdots Y_{r_v}^{f_{v,r_v}^{(s)}}$$ 
of the formal power series $G(\ul X, \ul Y, 1,...,1) \in
\mathbb{Q}[[\ul X, \ul Y]]$ is equal to 
$$\left[\prod_{k=1}^{m_v^{(s)}}[N(v)^{(d'_v s/\ell_{v,s})\cdot k} -1]
\right]^{-\ell_{s,v}} \cdot \Theta_v(D/K,s,\vec{f}_v).$$
\end{prop}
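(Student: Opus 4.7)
The plan is to prove this by directly expanding the generating function and matching the combinatorics of coefficient extraction with the definition of $\Omega_v(D/K,s,\vec f_v)$.

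First I would introduce the shorthand $c := N(v)^{d'_v s/\ell_{s,v}}$, so that $a_\nu = \prod_{k=1}^\nu (c^k-1)^{-1}$ and
\[
F(X_w Y_i Z_j) = \sum_{f\ge 0} \Bigl(\prod_{k=1}^{f}(c^k-1)^{-1}\Bigr) X_w^{f} Y_i^{f} Z_j^{f}.
\]
Multiplying these series over $w,i,j$ and labeling the dummy exponent of the factor indexed by $(w,i,j)$ by $f_{w,(i,j)}$, I obtain
\[
G(\ul X,\ul Y,\ul Z) = \sum_{(f_{w,(i,j)})\in \Z_{\ge 0}^{\ell_{s,v}r_v t_{s,v}}} \Bigl(\prod_{w,i,j}\prod_{k=1}^{f_{w,(i,j)}}(c^k-1)^{-1}\Bigr)\prod_{w,i,j} X_w^{f_{w,(i,j)}}Y_i^{f_{w,(i,j)}}Z_j^{f_{w,(i,j)}}.
\]

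Next, I would specialize $Z_j = 1$ for all $j$, collect powers of $X_w$ and $Y_i$, and observe that the monomial $\prod_w X_w^{m_v^{(s)}}\prod_i Y_i^{f_{v,i}^{(s)}}$ extracts exactly those tuples $(f_{w,(i,j)})$ satisfying $\sum_{i,j} f_{w,(i,j)} = m_v^{(s)}$ for all $w$ and $\sum_{w,j} f_{w,(i,j)} = f_{v,i}^{(s)}$ for all $i$. By the description in \eqref{eq:422}, these are precisely the defining conditions of $\Omega_v = \Omega_v(D/K,s,\vec f_v)$. Hence the coefficient in question equals
\[
\sum_{(\vec f_{w,*})_{w\mid v}\in \Omega_v}\ \prod_{w\mid v}\ \prod_{i,j}\prod_{k=1}^{f_{w,(i,j)}}(c^k-1)^{-1}.
\]

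The final step is to recognize each inner product as a normalized version of $\mathcal{T'}_w$. Since $N(w) = N(v)^{s/\ell_{s,v}}$ and the local index of $D'_s$ at $w$ is $d'_v$, formula~\eqref{eq:16} gives
\[
\mathcal{T'}_w(D'_s/L_s,\vec f_{w,*}) = \frac{\prod_{i=1}^{m_v^{(s)}}(c^i-1)}{\prod_{i,j}\prod_{k=1}^{f_{w,(i,j)}}(c^k-1)},
\]
so
\[
\prod_{i,j}\prod_{k=1}^{f_{w,(i,j)}}(c^k-1)^{-1} = \frac{\mathcal{T'}_w(D'_s/L_s,\vec f_{w,*})}{\prod_{i=1}^{m_v^{(s)}}(c^i-1)}.
\]
Taking the product over the $\ell_{s,v}$ places $w\mid v$ pulls out the factor $[\prod_{k=1}^{m_v^{(s)}}(c^k-1)]^{-\ell_{s,v}}$, and summing over $\Omega_v$ produces $\Theta_v(D/K,s,\vec f_v)$ by definition~\eqref{eq:419}. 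This yields exactly the claimed identity.

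The argument is essentially bookkeeping, so there is no real obstacle; the only point requiring care is matching indices and confirming that the constraints produced by coefficient extraction coincide bijectively with those defining $\Omega_v$, which is immediate from~\eqref{eq:422}.
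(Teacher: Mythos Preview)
Your proposal is correct and follows essentially the same approach as the paper: expand $G$ as a sum over tuples $(f_{w,(i,j)})$, observe that coefficient extraction at the prescribed monomial imposes exactly the constraints~\eqref{eq:422} defining $\Omega_v$, and then identify the resulting weight with $\mathcal{T}'_w(D'_s/L_s,\vec{f}_{w,*})$ up to the normalizing factor $\bigl[\prod_{k=1}^{m_v^{(s)}}(c^k-1)\bigr]^{-1}$ per place $w\mid v$. The paper presents the two sides separately (as equations~\eqref{eqn4.5.1} and~\eqref{eqn4.5.2}) and compares them, whereas you run the computation in one direction, but the content is the same bookkeeping.
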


\begin{proof}
First for each $w=1,\dots, \ell_{s,v}$, 
$$D_s'\otimes_{L_s} L_{w} \cong
\text{Mat}_{m_v^{(s)}}(\Delta_{w}')$$ 
where $\Delta_{w}'$ is a central division algebra over $L_{w}$ of
degree $d_v'$. 
Using the definition of $\Theta_v$ 
we can express the term $\Theta_v(D/K,s,\vec{f}_v)$ 
as follows
$$\sum_{(f_{w,(i,j)}) \in \Omega_v} 
\left[\prod_{w=1}^{\ell_{s,v}} \left( 
\frac{\displaystyle \prod_{k=1}^{
        m_v^{(s)}}[N(v)^{(d'_v s/\ell_{v,s})\cdot k} -1]}
{\displaystyle
\prod_{i=1}^{r_v} \prod_{j=1}^{t_{s,v}} \prod_{k=1}^{f_{w,(i,j)}}
      [N(v)^{(d'_v s/\ell_{v,s})\cdot k} -1]}\right)\right].$$ 
Therefore the term
$$\left[\prod_{k=1}^{m_v^{(s)}}[N(v)^{(d'_v s/\ell_{v,s})\cdot k} -1]
\right]^{-\ell_{s,v}} 
\cdot \Theta_v(D/K,s,\vec{f}_v)$$
is equal to
\begin{equation}
\label{eqn4.5.1}
\sum_{(f_{w,(i,j)})_{w,i,j}, \text{ with (\ref{eq:422})}} 
\left[\prod_{w=1}^{\ell_{s,v}} \prod_{i=1}^{r_v} \prod_{j=1}^{t_{s,v}}
  a_{f_{w,(i,j)}} \right]. 
\end{equation}

On the other hand, one sees that
the coefficient of the monomial 
 $$X_1^{m_v^{(s)}}\cdots X_{\ell_{s,v}}^{m_v^{(s)}}
 Y_1^{f_{v,1}^{(s)}}\cdots Y_{r_v}^{f_{v,r_v}^{(s)}}$$ 
of the formal power series
$G(\ul X, \ul Z, 1,\dots ,1)$
is equal to
\begin{equation}
\label{eqn4.5.2}
 \sum_{(f_{w,i,j})_{w,i,j} \text{ with (\ref{eq:422})}} 
 \left[\prod_{w=1}^{\ell_{s,v}} \prod_{i=1}^{r_v} \prod_{j=1}^{t_{s,v}}
a_{f_{w,i,j}} \right]. 
\end{equation}
The result follows from the equations (\ref{eqn4.5.1}) and
(\ref{eqn4.5.2}). This completes the proof of the proposition. \qed
\end{proof}

\subsection{A recursive formula for computing class
  numbers}\label{sec4.1} 
In this subsection, we present an
explicit recursive formula to compute the class numbers
$h_s(D/K,\vec{\mathbf{f}})$ in terms of mass sums.
Recall that $\text{Mass}(D/K,\vec{\mathbf{f}})$ can be expressed in
terms of special zeta values using the mass formula
(Theorem~\ref{31}). Then the target class 
number $h(D/K,\vec{\mathbf{f}})$ is simply the following sum 
$$h(D/K,\vec{\mathbf{f}}) = \sum_{s\in \mathbb{N}, \text{ } s \mid
  [\mathbb{F}_D:\mathbb{F}_K]} h_s(D/K, \vec{\mathbf{f}}).$$ 
We recall that (\ref{defn2.2}) $\mathbb{F}_D$ (resp.\
$\mathbb{F}_K$) is the constant field of $D$ (resp.\ $K$). One
computes the degree $s_0=[\mathbb{F}_D:\mathbb{F}_K]$ using
Lemma~\ref{23}. 

For each positive divisor $s$ of $[\mathbb{F}_D: \mathbb{F}_K]$,
we call the number of prime factors of the integer 
$[\mathbb{F}_D: \mathbb{F}_K]/s$ with multiplicity 
the {\it depth of $(D/K,s)$}, and denote it by $\mu(D/K,s)$. That is, if
$[\mathbb{F}_D: \mathbb{F}_K]/s=\prod_{i} p_i^{n_i}$ is the prime
decomposition, then $\mu(D/K,s)=\sum_i n_i$. \\ 

(1) The case $\mu(D/K,s)=0$, i.e. $s=s_0$. It is clear that
    $[\mathbb{F}_{D_s'}:\mathbb{F}_{L_s}] = 1$. Theorems~\ref{45} 
and \ref{46}
    states that 
\begin{eqnarray}
h_s(D/K,\vec{\mathbf{f}})
&=& \frac{q^s-1}{s} \cdot \sum_{\vec{\mathbf{f}}_* \in
  \Omega(D/K,s,\vec{\mathbf{f}})} \text{Mass}(D'_{s}/L_{s},
\vec{\mathbf{f}}_*^o) \\ 
&=& \frac{q^s-1}{s} \cdot \text{Mass}(D'_s/L_s) \cdot \prod_{v \in
  \Sigma_K^0} \Theta_v(D/K,s,\vec{f}_v). \nonumber  
\end{eqnarray}
Therefore we have evaluated the class number 
$h_s(D/K,\vec{\mathbf{f}})$ when
$\mu(D/K,s)=1$. \\

(2) Given a positive integer $N$, assume 
that the class number 
$h_{s'}(D/K,\vec{\mathbf{f}})$ has been evaluated for any positive
integer $s'$ with $\mu(D/K,s') \leq N$.  
Let $s$ be a positive divisor of $[\mathbb{F}_D:\mathbb{F}_K]$ with
the depth 
$\mu(D/K,s) = N+1$. Theorem~\ref{46} says that
\begin{equation}\label{eq4.7}
s \cdot \sum_{s': s \mid s' \mid [\F_D:\F_K]}
\frac{h_{s'}(D/K,\vec{\mathbf{f}})}{q^{s'}-1} 
= \text{\rm Mass}(D_s'/L_s) \cdot \prod_{v \in \Sigma_K^0}
\Theta_v(D/K,s,\vec{f}_v). 
\end{equation}
Since the right hand side of the equation~(\ref{eq4.7}) can be
computed explicitly, 
one evaluates the term $h_s(D/K,\vec{\mathbf{f}})$ by
\begin{eqnarray}
h_s(D/K,\vec{\mathbf{f}})
&=& \frac{q^s-1}{s} \cdot
\Bigg(\text{\rm Mass}(D_s'/L_s) \cdot \prod_{v \in \Sigma_K^0}
\Theta_v(D/K,s,\vec{f}_v) \nonumber \\ 
& & - s \cdot \sum_{s': s \mid s' \mid [\F_D:\F_K], \atop
  s'>s}\frac{h_{s'}(D/K,\vec{\mathbf{f}})}{q^{s'}-1}\Bigg). \nonumber  
\end{eqnarray}

By the steps (1) and (2), we can compute all class numbers 
$h_s(D/K,\vec{\mathbf{f}})$ and hence compute the desired class number 
$h(D/K,\vec{\mathbf{f}})$. 

\begin{rem}
Since the mass sum can be expressed by integral values of the 
zeta function, the class number 
$h(D/K,\vec{\mathbf{f}})$ can be expressed in terms
of special zeta values eventually. 
\end{rem}

\section{Special cases}\label{sec:05}

In this section, we explain first that Theorem~\ref{44} is
a generalization of Gekeler's transfer principle
\cite{Ge3}. We also deduce an explicit class number formula 
in the case where the 
degree $s_0=[\mathbb{F}_D:\mathbb{F}_K]$ is a prime. Then we give one example
to illustrate how to compute the class number 
using the results in Section~\ref{sec:04}. 

\subsection{Gekeler's transfer principle}\label{sec:51}
 
Let $K,\infty,A$ be as before and $v_0$ a fixed finite place of 
$K$. For any positive integer $n$, let $\Lambda(K,\infty,v_0;n)$
denote 
the set of isomorphism classes of supersingular Drinfeld $A$-modules 
of rank $n$ over $\ol \F_{v_0}$. For any object $\phi$ in
$\Lambda(K,\infty,v_0;n)$, the automorphism group $\Aut(\phi)$ of
$\phi$ 
is isomorphic to $\F_{q^s}^\times$ for some positive integer $s$ 
with $s|n$. 
For any positive divisor $s$ of $n$, let
\[ \Lambda(K,\infty,v_0;n, s):=\{\phi\in \Lambda(K,\infty,v_0;n)\mid
\Aut(\phi)\simeq \F_{q^s}^\times\}. \]

\begin{thm}[The transfer principle \cite{Ge3}]\label{51}
 For any two positive divisors $s$ and $s'$ of $n$ with $s|s'$, there is a
 natural bijection 
  \begin{equation}
    \label{eq:51}
    \Phi:\Lambda(L_s,\infty_s,v_{0,s}; n/s', s'/s)\isoto 
\Lambda(K,\infty, v_0 ; n, s'),
  \end{equation}
where $L_s$ is the constant field extension of degree $s$, $v_{0,s}$
and 
$\infty_s$ are the unique places of $L_s$ over $v_0$ and $\infty$,
respectively. Recall that $O_{L_s}$ is the integral closure of $A$ in
$L_s$.  
The map $\Phi$ sends a Drinfeld $O_{L_s}$-module $\phi_s:O_{L_s}\to
\ol 
\F_{v_{0,s}}\{\tau\}$ to the Drinfeld $A$-module $\phi:=\phi_s|_A$.   
\end{thm}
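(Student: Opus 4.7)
The plan is to deduce Theorem 5.1 from the general transfer principle (Theorem 4.3) via the classical Drinfeld--Gekeler dictionary. Recall that for the Drinfeld-type algebra $D$ with local invariants $-1/n$ at $\infty$ and $1/n$ at $v_0$, there is a natural bijection between $\Lambda(K,\infty,v_0;n)$ and $\Cl(R_{\max})$ for $R_{\max}$ a maximal $A$-order in $D$, and under this bijection $\Aut(\phi)$ corresponds to the unit group $R_i^\times$ of the left order of the associated ideal; hence $|\Lambda(K,\infty,v_0;n,s')| = h_{s'}(D/K, R_{\max})$. The analogous identity on the $L_s$-side requires verifying that the centralizer $D_s' = C_\iota(L_s)$ is itself of Drinfeld type over $L_s$ of degree $n/s$: its local invariants are $\pm s/n \equiv \pm 1/(n/s) \pmod{\mathbb{Z}}$ at $\infty_s$ and $v_{0,s}$ respectively and trivial elsewhere, by the base-change formula $\mathrm{inv}_w(D_s') = [L_{s,w}:K_v]\cdot\mathrm{inv}_v(D)$ combined with $[L_{s,\infty_s}:K_\infty] = [L_{s,v_{0,s}}:K_{v_0}] = s$. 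Consequently $|\Lambda(L_s,\infty_s,v_{0,s};n/s,s'/s)| = h_{s'/s}(D_s'/L_s, R'_{\max})$.

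The heart of the proof is to apply Theorem 4.3 with $\vec{\mathbf{f}} = \vec{\mathbf{f}}_{\max}$ and evaluate the sum on the right of (1.4) explicitly. For each finite place $v \neq v_0$ we have $d_v = 1$, $m_v = n$, $r_v = 1$, so $t_{s,v} = 1$ and $\Omega_v$ is a singleton assigning the vector $(n/s)$ to each of the $\ell_{s,v}$ places of $L_s$ above $v$. At $v = v_0$ we have $d_{v_0} = n$, $m_{v_0} = 1$, $r_{v_0} = 1$, $\vec{f}_{v_0} = (1)$, and since $s \mid s_0$ forces $\gcd(s,\deg v_0) = 1$ (by Lemma 2.3 applied to this algebra), we obtain $\ell_{s,v_0} = 1$ and $t_{s,v_0} = s$; the set $\Omega_{v_0}$ then consists of the $s$ tuples $(f_{w_0,(1,j)})_{j=1}^{s} \in \{0,1\}^s$ with entries summing to $1$. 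In each case the truncated invariant $\vec{\mathbf{f}}_*^o$ coincides with the invariant of a maximal $O_{L_s}$-order in $D_s'$ (period $1$ locally everywhere), so (1.4) collapses into $s \cdot h_{s'}(D/K, R_{\max}) = s \cdot h_{s'/s}(D_s'/L_s, R'_{\max})$. Dividing through yields the cardinality equality $|\Lambda(K,\infty,v_0;n,s')| = |\Lambda(L_s,\infty_s,v_{0,s};n/s,s'/s)|$.

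Finally, the map $\Phi: \phi_s \mapsto \phi_s|_A$ is visibly well-defined and natural: restriction along $A \hookrightarrow O_{L_s}$ sends a supersingular Drinfeld $O_{L_s}$-module of rank $n/s$ to a supersingular Drinfeld $A$-module of rank $(n/s)\cdot[L_s:K] = n$, and the containment $\Aut_{O_{L_s}}(\phi_s) \hookrightarrow \Aut_A(\phi_s|_A)$ shows that $\mathbb{F}_{q^{s'}}^\times \subseteq \Aut_A(\phi_s|_A)$. Surjectivity is direct: for $\phi \in \Lambda(K,\infty,v_0;n,s')$ the inclusion $\mathbb{F}_{q^s} \subseteq \mathbb{F}_{q^{s'}} = \Aut_A(\phi) \subset \End_A(\phi)$ extends the $A$-action to an $O_{L_s}$-action, producing a preimage; injectivity then follows formally from the equality of finite cardinalities established above. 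The principal obstacle in the whole argument is the local combinatorial computation at $v_0$ showing that $|\Omega_{v_0}| = s$ and that every resulting invariant gives a maximal $O_{L_s}$-order in $D_s'$; once this is settled, Gekeler's clean recursion emerges as a special case of the general transfer principle, giving an algebraic proof of Theorem 5.1.
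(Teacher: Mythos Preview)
Your argument matches the paper's exactly: both invoke the Drinfeld--Gekeler dictionary to rewrite $|\Lambda(\cdot)|$ as weight-$s'$ class numbers, compute that $\Omega(D/K,s,\vec{\mathbf f}_{\max})$ has $s$ elements (all concentrated at $v_0$, each yielding the invariant of a maximal $O_{L_s}$-order in $D_s'$), and then apply Theorem~\ref{44} to obtain $s\cdot h_{s'}(D/K,R)=s\cdot h_{s'/s}(D_s'/L_s,R_s)$. The paper stops there, declaring ``this is exactly Gekeler's transfer principle''; your additional direct treatment of the map $\Phi$ goes beyond what Section~\ref{sec:51} does.

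That extra step has one loose end worth tightening. You verify only $\mathbb{F}_{q^{s'}}^\times\subseteq\Aut_A(\phi_s|_A)$, so as written $\Phi$ is not yet known to land in the $s'$-stratum, and the cardinality-plus-surjectivity argument for injectivity then does not go through formally. The fix is immediate: $\Aut_{O_{L_s}}(\phi_s)$ is the centralizer of $\phi_s(O_{L_s})$ inside the finite \emph{abelian} group $\Aut_A(\phi_s|_A)\cong\mathbb{F}_{q^{s''}}^\times$, hence equals all of it, forcing $s''=s'$. With this observation your bijection argument is complete.
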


Let $D$ be the endomorphism algebra $\End(\phi)\otimes_A K$ of $\phi$,
where $\phi$ is an object in $\Lambda(K,\infty, v_0 ; n)$.  
It is a basic fact that (cf. \cite{Ge2}) $$-\text{inv}_{\infty}(D) \equiv
\text{inv}_{v_0}(D) \equiv 1/n \bmod \mathbb{Z}$$ and 
$$\text{inv}_v(D) \equiv 0 \bmod \mathbb{Z} \text{ for } v \neq v_0,
\infty. $$ 
The subset $\Lambda(K,\infty, v_0 ; n, s)$ is non-empty if and only if 
$s|s_0$, where $s_0:=[\F_D:\F_K]$. Using the notation as before,
$m_\infty=m_{v_0}=1$. By Lemma~\ref{22}, 
the integer $s_0$ is the largest divisor $s$
of $n$ that satisfies $(\deg \infty, s)=(\deg v_0,s)=1$. It follows
that  
\begin{equation}
  \label{eq:52}
   s_0=\prod_{p\nmid \deg v_0\cdot \deg \infty} p^{\ord_{p}(n)}.
\end{equation}


Let $R:=\End(\phi)$ be the endomorphism ring of $\phi$, 
where $\phi\in \Lambda(K,\infty, v_0 ; n)$. 
Then $R$ is a maximal $A$-order in $D$ and
$h(D/K,R)=\# \Lambda(K,\infty, v_0 ; n)$.
Moreover, for any positive divisor $s$ of $s_0$ one gets
$$h_s(D/K,R) = \# \Lambda(K,\infty, v_0 ; n, s).$$
Therefore Gekeler's transfer principle asserts that
for $s\mid s' \mid s_0$, one has
$$h_{s'}(D/K,R) = h_{s'/s}(D_s'/L_s,R_s),$$
where $R_s$ is a maximal $O_{L_s}$-order in $D_s'$. 

Now, since $R$ is a maximal $A$-order of $D$, the invariant
$\vec{\mathbf{f}}_R = (\vec{f}_v)_{v \neq \infty}$ of $R$ is as follows 
$$r_v = 1 \text{ and } \vec{f}_v = (m_v), \quad \forall\, 
v\in \Sigma_K^0.$$  
As $D$ is of Drinfeld type, we have
$$m_v = 
\begin{cases} 1 & \text{ if $v = \infty$ or $v_0$,}\\
              n & \text{ otherwise;} 
\end{cases}
\text{ and } d_v = \begin{cases} n & \text{ if $v = \infty$ or
    $v_0$,}\\ 
    1 & \text{ otherwise.} \end{cases} $$
Consider elements  $\vec{\mathbf{f}}_* = (\vec{f}_{w,*})_
{w \in \Sigma_{L_s}^0}$ 
in $\Omega(D/K,s, \vec{\mathbf{f}}_R)$. For $w \neq v_{0,s}$, 
the only possible choice for
$\vec{f}_{w,*}$ for $w \neq v_{0,s}$ is $(n/s)$. 
For $w = v_{0,s}$, 
the vector $\vec{f}_{w,*} = (f_{w,(1,j)})_{1\leq j \leq s}$ satisfies 
the
    following property:
$$f_{w,(1,j)} = 1 \text{ for some $j$ and }
    f_{w,(1,j')} = 0 \text{ for } j' \neq j.$$ 
Therefore:
\begin{itemize}
\item[(i)] The cardinality of $\Omega(D/K,s, \vec{\mathbf{f}}_R)$ is $s$.
\item[(ii)]  For any $\vec{\mathbf{f}}_* \in \Omega(D/K,s,
  \vec{\mathbf{f}}_R)$, the ring $R(D_{s}^{\prime}/L_{s},
  \vec{\mathbf{f}}_*^o)$ is a maximal $O_{L_{s}}$-order $R_s$ in
  $D_s'$. 
\end{itemize}
By Theorem~\ref{44}, we have
\begin{eqnarray}
s\cdot h_{s'}(D/K,R) &= & s\cdot h_{s'}(D/K,\vec{\mathbf{f}}_R)
\nonumber \\ 
&=& \sum_{\vec{\mathbf{f}}_* \in \Omega(D/K,s,\vec{\mathbf{f}}_R)}
h_{s'/s}(D_s'/L_s,\vec{\mathbf{f}}_*^o) \nonumber \\ 
&=& s \cdot h_{s'/s}(D_s'/L_s, R_s). \nonumber
\end{eqnarray}
This is exactly Gekeler's transfer principle.

\subsection{The case where  
$[\mathbb{F}_D:\mathbb{F}_K]$ is a
  prime}\label{sec5.2} 

Assume that $s_0 = [\mathbb{F}_D:\mathbb{F}_K]$ is a prime number. 
For any hereditary $A$-order $R(D/K,\vec{\mathbf{f}})$, one has
$$h(D/K,\vec{\mathbf{f}}) = h_1(D/K,\vec{\mathbf{f}}) +
h_{s_0}(D/K,\vec{\mathbf{f}}),$$ 
and
$$\text{Mass}(D/K,\vec{\mathbf{f}}) =
\frac{h_1(D/K,\vec{\mathbf{f}})}{q-1} +
\frac{h_{s_0}(D/K,\vec{\mathbf{f}})}{q^{s_0}-1}.$$ 
By Theorem~\ref{46}, we have
\begin{eqnarray}
h_{s_0}(D/K,\vec{\mathbf{f}}) 
&=&\frac{q^{s_0}-1}{s_0} \cdot 
\sum_{\vec{\mathbf{f}}_*\in \Omega(D/K,s_0,\vec{\mathbf{f}})}
\text{Mass}(D_{s_0}'/L_{s_0}, \vec{\mathbf{f}}_*^o). \nonumber
\end{eqnarray}
Therefore we get the following result.

\begin{thm}\label{52}
When $s_0=[\mathbb{F}_D:\mathbb{F}_K]$ is a prime number, we have
\begin{eqnarray}\label{eq:53}
h(D/K,\vec{\mathbf{f}})
&=& (q-1)\cdot \text{\rm Mass}(D/K,\vec{\mathbf{f}}) \nonumber \\
& &+ \frac{q^{s_0}-q}{s_0} \cdot 
\text{\rm Mass}(D_{s_0}'/L_{s_0}) \cdot \prod_{v \in \Sigma_K^0}
\Theta_v(D/K,s_0,\vec{f}_v). 
\end{eqnarray}
\end{thm}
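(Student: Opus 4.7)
\medskip

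\noindent\textbf{Proof plan for Theorem~\ref{52}.}
Since $s_0=[\F_D:\F_K]$ is prime, its only positive divisors are $1$ and $s_0$, and Lemma~\ref{21} forces $R_i^\times \cong \F_q^\times$ or $\F_{q^{s_0}}^\times$ for every left order $R_i$. Accordingly, the basic decomposition (\ref{eq:35}) reduces to
\[
h(D/K,\vec{\mathbf{f}}) \;=\; h_1(D/K,\vec{\mathbf{f}}) + h_{s_0}(D/K,\vec{\mathbf{f}}),
\]
while the mass sum (\ref{eq:34}) collapses to
\[
\text{\rm Mass}(D/K,\vec{\mathbf{f}}) \;=\; \frac{h_1(D/K,\vec{\mathbf{f}})}{q-1} + \frac{h_{s_0}(D/K,\vec{\mathbf{f}})}{q^{s_0}-1}.
\]
These two relations will be the linear system I solve.

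Solving the mass identity for $h_1$ and substituting into the expression for $h$ gives
\[
h(D/K,\vec{\mathbf{f}}) \;=\; (q-1)\,\text{\rm Mass}(D/K,\vec{\mathbf{f}}) \;+\; \frac{q^{s_0}-q}{q^{s_0}-1}\, h_{s_0}(D/K,\vec{\mathbf{f}}).
\]
So the entire task reduces to evaluating $h_{s_0}(D/K,\vec{\mathbf{f}})$ in closed form.

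For this I invoke Theorem~\ref{46} with $s=s_0$. Because $s_0$ is prime, the only divisor $s'$ with $s_0 \mid s' \mid s_0$ is $s'=s_0$ itself, so the left-hand side of (\ref{eq:420}) simplifies to a single term and yields
\[
s_0 \cdot \frac{h_{s_0}(D/K,\vec{\mathbf{f}})}{q^{s_0}-1} \;=\; \text{\rm Mass}(D'_{s_0}/L_{s_0}) \cdot \prod_{v\in \Sigma_K^0}\Theta_v(D/K,s_0,\vec{f}_v).
\]
Hence
\[
\frac{q^{s_0}-q}{q^{s_0}-1}\, h_{s_0}(D/K,\vec{\mathbf{f}}) \;=\; \frac{q^{s_0}-q}{s_0}\cdot \text{\rm Mass}(D'_{s_0}/L_{s_0})\cdot \prod_{v\in \Sigma_K^0}\Theta_v(D/K,s_0,\vec{f}_v).
\]

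Substituting this back into the earlier expression for $h(D/K,\vec{\mathbf{f}})$ yields the desired formula (\ref{eq:53}). There is no real obstacle here; the argument is a two-line linear algebra manipulation in the unknowns $h_1,h_{s_0}$ combined with the prime-$s_0$ specialization of Theorem~\ref{46}. The only thing to verify is that the collapse of the sum $\sum_{s_0\mid s'\mid s_0}$ to a single term on the left of (\ref{eq:420}) is legitimate, which is immediate from the primality assumption on $s_0$.
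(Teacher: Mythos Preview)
Your proof is correct and follows essentially the same approach as the paper: decompose $h=h_1+h_{s_0}$ and $\text{Mass}=h_1/(q-1)+h_{s_0}/(q^{s_0}-1)$ using that $s_0$ is prime, then specialize Theorem~\ref{46} at $s=s_0$ so that the sum over $s'$ collapses to the single term $h_{s_0}$, and solve the resulting two-equation linear system. The paper presents the intermediate step for $h_{s_0}$ via the sum over $\Omega(D/K,s_0,\vec{\mathbf{f}})$ (the form of Theorem~\ref{45}) before passing to the product of $\Theta_v$'s, whereas you invoke the product form of Theorem~\ref{46} directly; this is a cosmetic difference only.
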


Now, suppose $n$ is a prime number (where $[D:K] = n^2$).
For each place $v$ of $K$, let
$$\epsilon(v) := 
\begin{cases} 1 & \text{ if $n \nmid \deg v$} \\
0 & \text{ otherwise.} 
\end{cases}$$
There exists an embedding of $L_{n}$ into $D$ if and only if 
$\prod_{v \in S_D} \epsilon(v) = 1$. 
Since $D_{n}^{\prime} \cong L_{n}$ and $A\mathbb{F}_{q^{n}} =
O_{L_{n}}$, 
we have
\begin{equation}
  \label{eq:54}
  {\rm Mass}(D_{n}'/L_{n}) 
= \frac{\#(\text{Pic}(O_{L_{n}}))}{(q^{n}-1)}
\end{equation}
and 
\begin{equation}
  \label{eq:55}
  \Theta_v(D/K,n,\vec{f}_v) = \#(\Omega_v(D/K,n,\vec{f}_v)).
\end{equation}
Therefore it suffices to compute 
$\#(\Omega_v(D/K,n, \vec{f}_v))$ for the place $v$ where $D_v$ is a 
division algebra or where the order $R_v$ is not maximal. 

Let $S'$ be the set of finite places of $K$ where $R$ is not maximal. 
Since $n$ is a prime, the intersection of $S_D$ and $S'$ is empty. 
Suppose $v \in S_D-\{\infty\}$.
For $(\vec{f}_{w,*})_{w \mid v} \in
\Omega_v(D/K,n, \vec{f}_v)$, there are precisely
$n$ choices for $\vec{f}_{w,*}$ if $w$ is the only one place lying
above $v$ where $v \in S_D-\{\infty\}$, i.e.\
\begin{equation}
  \label{eq:56}
  \#(\Omega_v(D/K,n,\vec{f}_v)) = n.
\end{equation}
Let $v \in S'$. If $\ell_{n,v} = \text{gcd}(n,\deg v) = 1$, then
$\Omega_v(D/K,n, \vec{f}_v)$ is empty. 
Suppose $\ell_{n,v} = n$ for all $v \in S'$. Then $t_{n,v} = 1$ and
${m_v}/{\ell_{n,v}} = 1$ for $v \in S'$. 
Fix a place $v \in S'$. Then for places
$w_1,...,w_{n}$ lying above $v$, one has
$$\vec{f}_{w_{\nu},*} = (f_{w_{\nu}, (i,1)})_{1\leq i \leq r_v}$$
with $f_{w_{\nu},(i,1)} = 1$ for some $i$ and $f_{w_{\nu},(i',1)} =
0$ for $i' \neq i$, and  
$$\sum_{\nu=1}^{n} f_{w_{\nu},(i,1)} = f_{v,i} \text{ for } 1\leq i
\leq r_v.$$ 
Therefore the number of choices of
$(\vec{f}_{w_1,*},...,\vec{f}_{w_{n},*}) \in \Omega_v(D/K,n,\vec{f}_v)$,
i.e.\ the cardinality of $\Omega_v(D/K,n,\vec{f}_v)$,
is
\begin{equation}
  \label{eq:57}
  \frac{n!}{f_{v,1}!\cdots f_{v,r_v}!}.
\end{equation}

We conclude that $\Omega(D/K,n, \vec{\mathbf{f}})$
is non-empty if and only if
\begin{equation}
  \label{eq:58}
  \prod_{v \in S_D} \epsilon(v) \cdot
\prod_{v \in S'} (1-\epsilon(v)) = 1,
\end{equation}
and in this case, 
one has
\begin{equation}
  \label{eq:59}
  \#(\Omega(D/K,n, \vec{\mathbf{f}})) = \prod_{v \in S -\{\infty\}} n
\cdot \prod_{v \in S'} \frac{n!}{f_{v,1}!\cdots f_{v,r_v}!}.
\end{equation}
By Theorem~\ref{52} and the relations (\ref{eq:54})--(\ref{eq:59}) , 
we obtain the following result.

\begin{thm}\label{53}
Let $D$ be a definite central simple algebra over $K$
$($with respect to $\infty)$ of degree $n$.
Let $R= R(D/K,\vec{\mathbf{f}})$ be a hereditary $A$-order in $D$ with
invariant $\vec{\mathbf{f}}$. Assume that $n$ is a  
prime number.
Let $S$ be the set of places where $D$ is ramified $($including
$\infty)$, and $S'$ be the set of finite places where the hereditary
$A$-order $R$ is not maximal.
Then the class number $h(D/K,\vec{\mathbf{f}})$ is equal to

\begin{equation}
  \label{eq:510}
  \begin{split}
    & (q-1)\cdot \text{\rm Mass}(D/K,\vec{\mathbf{f}}) \\ 
    + & \frac{q^n-q}{q^n-1}\cdot \frac{\#\text{\rm
        Pic}(O_{L_n})}{n^2}\cdot 
\prod_{v \in S} (n\cdot \epsilon(v)) \cdot \prod_{v \in
  S^{\prime}}\left(\frac{n!}{f_{v,1}!\cdots
    f_{v,r_v}!}(1-\epsilon(v))\right). 
  \end{split}
\end{equation}
\end{thm}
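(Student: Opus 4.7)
The plan is to reduce to Theorem~\ref{52} after first pinning down $s_0=[\mathbb{F}_D:\mathbb{F}_K]$. Since $s_0\mid n$ and $n$ is prime, $s_0\in\{1,n\}$. For prime $n$ one has $m_v\in\{1,n\}$ with $m_v=1$ exactly when $v\in S$, so Lemma~\ref{23} says $s_0=n$ iff $\ell_{n,v}=1$ for every $v\in S$, equivalently $\epsilon(v)=1$ for all $v\in S$, i.e.\ $\prod_{v\in S}\epsilon(v)=1$; otherwise $s_0=1$.

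If $s_0=1$, then $h_s=0$ for $s>1$, so $h(D/K,\vec{\mathbf{f}})=h_1(D/K,\vec{\mathbf{f}})=(q-1)\cdot\text{\rm Mass}(D/K,\vec{\mathbf{f}})$ by (\ref{eq:34}). In this case some $\epsilon(v)=0$ for $v\in S$, so $\prod_{v\in S}(n\epsilon(v))=0$ and the second summand in (\ref{eq:510}) vanishes; the formula holds trivially. It remains to treat $s_0=n$, where Theorem~\ref{52} gives
\begin{equation*}
  h(D/K,\vec{\mathbf{f}}) = (q-1)\cdot\text{\rm Mass}(D/K,\vec{\mathbf{f}})+\frac{q^n-q}{n}\cdot\text{\rm Mass}(D'_n/L_n)\prod_{v\in\Sigma_K^0}\Theta_v(D/K,n,\vec{f}_v).
\end{equation*}

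Next I would evaluate the two factors in the second summand. Since $D'_n$ has degree $n/s_0=1$ over $L_n$, it equals $L_n$ itself, so the mass formula (or equivalently (\ref{eq:54})) yields $\text{\rm Mass}(D'_n/L_n)=\#\text{\rm Pic}(O_{L_n})/(q^n-1)$. Commutativity of $D'_n$ further forces every local index to be trivial, so each $\mathcal{T}'_w$ in (\ref{eq:419}) equals $1$, and hence $\Theta_v=\#\Omega_v(D/K,n,\vec{f}_v)$, in line with (\ref{eq:55}).

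To finish, I would enumerate $\#\Omega_v$ in three cases: (i) for $v\notin S\cup S'$ one has $r_v=1$, $d_v=1$, $\vec{f}_v=(n)$, and $\#\Omega_v=1$; (ii) for $v\in S\setminus\{\infty\}$, Lemma~\ref{41} gives $\#\Omega_v=n$ when $\epsilon(v)=1$ and $\#\Omega_v=0$ when $\epsilon(v)=0$, reproducing (\ref{eq:56}); (iii) for $v\in S'$, the divisibility condition of Lemma~\ref{41} forces $\Omega_v=\emptyset$ when $\epsilon(v)=1$ (otherwise $f_{v,i}\in\{0,n\}$, contradicting nontriviality of $\vec{f}_v$), whereas $\epsilon(v)=0$ yields the multinomial $n!/(f_{v,1}!\cdots f_{v,r_v}!)$ of (\ref{eq:57}). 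Since $\infty\in S$ contributes a factor $n\cdot\epsilon(\infty)=n$ to the right-hand product $\prod_{v\in S}(n\epsilon(v))$ while being absent from $\Sigma_K^0$, this extra $n$ combines with the prefactor $1/n$ in $(q^n-q)/n$ to produce the $1/n^2$ in (\ref{eq:510}). The main bookkeeping hurdle is this consistent handling of $\infty$ across the two products; once it is done, (\ref{eq:510}) falls out.
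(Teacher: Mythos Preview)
Your proposal is correct and follows essentially the same route as the paper: apply Theorem~\ref{52}, identify $D'_n=L_n$ so that $\text{Mass}(D'_n/L_n)=\#\text{Pic}(O_{L_n})/(q^n-1)$ and $\Theta_v=\#\Omega_v$, then count $\#\Omega_v$ separately for $v\in S\setminus\{\infty\}$ and $v\in S'$ exactly as in (\ref{eq:54})--(\ref{eq:59}), and finally absorb the missing $\infty$-factor into the $1/n^2$. The only organizational difference is that you make the dichotomy $s_0\in\{1,n\}$ explicit up front and dispose of $s_0=1$ separately, whereas the paper lets this case be absorbed by the vanishing of $\prod_{v\in S}\epsilon(v)$; your version is arguably cleaner, since Theorem~\ref{52} literally assumes $s_0$ is prime. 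One small wording issue: Lemma~\ref{41} only gives the non-emptiness criterion for $\Omega_v$, not the actual cardinality $n$ in case~(ii); you still need the direct count (one place $w$, $t_{n,v}=n$, so $\sum_{j=1}^n f_{w,(1,j)}=1$ has $n$ solutions) as in (\ref{eq:56}).
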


Theorem~\ref{53} agrees with the main results of  Denert and Van Geel
\cite[Theorems 3 and 9]{D-G} when $K$ is the rational function
field. We remark that the 
proof of Theorem~\ref{53} does not rely on the Eichler-Brandt trace
formula. 


\subsection{Example}\label{sec5.3}

Here we present one explicit example to show how to compute the class
number by the recursive formula in \ref{sec4.1}.  

Let $K= \mathbb{F}_q(T)$ with $q = 3$ and $A= \mathbb{F}_q[T]$. Let
$D$ be the central division algebra over $K$ with 
$$-\text{inv}_{\infty}(D) = \text{inv}_{T}(D) = \frac{1}{4} \text{ }
\bmod \mathbb{Z},$$ 
$$\text{inv}_{T+1}(D) = \text{inv}_{T+2}(D) = \frac{1}{2} \text{ }
\bmod \mathbb{Z},$$ 
and $\text{inv}_v(D) = 0 \text{ } \bmod \mathbb{Z}$ for $v \neq
\infty, T, T+1, T+2$. 
Let $R$ be a maximal $A$-order in $D$.
Then the invariant $\vec{\mathbf{f}} = (\vec{f}_v)_{v \neq \infty}$ of
$R$ is: 
\begin{eqnarray}
\vec{f}_T & = & (1) \nonumber \\
\vec{f}_{T+1} & = & (2) \nonumber \\
\vec{f}_{T+2} & = & (2) \nonumber \\
\vec{f}_v & = & (4) \text{ for $v \neq T$, $T+1$, $T+2$.} \nonumber
\end{eqnarray}

By Lemma~\ref{23}, we have $[\mathbb{F}_D:\mathbb{F}_K] = 4$.
It suffices to compute $h_1(D/K,\vec{\mathbf{f}})$,
$h_2(D/K,\vec{\mathbf{f}})$, and $h_4(D/K,\vec{\mathbf{f}})$.\\ 

Note that $L_4 = \mathbb{F}_{q^4}(T)$, $O_{L_4} =
\mathbb{F}_{q^4}[T]$, and 
$D_4' \cong L_4$.
By Theorem~\ref{46},
\begin{eqnarray}
h_4(D/K,\vec{\mathbf{f}}) &=& \frac{q^4-1}{4} \cdot
\text{Mass}(D_4'/L_4) \cdot \prod_{0\leq i \leq 2} \Theta_{T+i}
(D/K,4,\vec{f}_{T+i}) \nonumber \\  
&=& \frac{1}{4} \cdot \prod_{0\leq i \leq 2} \Theta_{T+i}
(D/K,4,\vec{f}_{T+i}). \nonumber  
\end{eqnarray}

Since $\deg T = \deg T+1 = \deg T+2 = 1$, there exists only one place
$w_T$ (resp.\ $w_{T+1}$, $w_{T+2}$) of $L_4$ lying above $T$ 
(resp.\ $T+1$, $T+2$). 
Therefore 
$$\ell_{4,T} = \ell_{4,T+1} = \ell_{4,T+2}=1,$$
and
$$t_{4,T} = 4, t_{4,T+1} = t_{4,T+2} = 2.$$
This also tells us that
$$\Theta_{T+i} (D/K,4,\vec{f}_{T+i}) =
\#(\Omega_{T+i}(D/K,4,\vec{f}_{T+i})).$$ 
When $w = w_T$, for any $\vec{f}_{w,*} = (f_{w,(1,j)})_{1\leq j \leq
  4} \in \Omega_{T}(D/K,4,\vec{f}_T)$, one has 
$$\sum_{j=1}^4 f_{w,(1,j)} = 1.$$
This means that there are $4$ choices for $\vec{f}_{w_T,*}$, i.e.\
$$\#(\Omega_T(D/K,4,\vec{f}_T)) = 4.$$
When $w = w_{T+1} \text{ or } w_{T+2}$, any vector $\vec{f}_{w,*} =
(f_{w,(1,j)})_{1\leq j \leq 2}$ satisfies that 
$$\frac{4}{1\cdot 2} \cdot \sum_{j=1}^2 f_{w,(1,j)} = 2.$$
Therefore there are $2$ choices for $\vec{f}_{w_{T+1},*}$ and
$\vec{f}_{w_{T+2},*}$, i.e.\
$$\#(\Omega_{T+1}(D/K,4,\vec{f}_{T+1})) =
\#(\Omega_{T+2}(D/K,4,\vec{f}_{T+2})) = 2.$$ 
We then conclude that
\begin{equation}
\label{eq51}
h_4(D/K,\vec{\mathbf{f}}) = 4.
\end{equation}
${}$

Next, we compute $h_2(D/K,\vec{\mathbf{f}})$.
It is clear that there exists only one place $w_T$ (resp.\ $w_{T+1}$,
$w_{T+2}$) of $L_2$ lying above $T$ (resp.\ $T+1$, $T+2$), and 
$$\ell_{2,T} = \ell_{2,T+1} = \ell_{2,T+2} = 1, \text{ }
t_{2,T} = t_{2,T+1} = t_{2,T+2} = 2.$$
By Theorem~\ref{46}, we have
$$\frac{h_2(D/K,\vec{\mathbf{f}})}{q^2-1} +
\frac{h_4(D/K,\vec{\mathbf{f}})}{q^4-1} = \frac{1}{2} \cdot
\text{Mass}(D_2'/L_2) \cdot  
\prod_{0\leq i \leq 2} \Theta_{T+i}(D/K,2,\vec{f}_{T+i}).$$ 
The mass formula in Theorem~\ref{31} says that
$$\text{Mass}(D_2'/L_2) = \frac{1}{q^4-1} = \frac{1}{80}.$$
It remains to compute $\Theta_{T+i}(D/K,2,\vec{f}_{T+i})$ for $0\leq i
\leq 2$. 

The vectors $\vec{f}_{w_T,*} = (f_{w_T,(1,j)})_{1\leq j \leq 2} \in
\Omega_{T}(D/K,2,\vec{f}_T)$ 
satisfy that 
$$ \sum_{j=1}^2 f_{w_T,(1,j)} = 1;$$
the vectors $\vec{f}_{w_{T+1},*} = (f_{w_{T+1},(1,j)})_{1\leq j \leq
  2} \in \Omega_{T+1}(D/K,2,\vec{f}_{T+1})$ 
and the vectors $\vec{f}_{w_{T+2},*} = (f_{w_{T+2},(1,j)})_{1\leq j
  \leq 2}\in \Omega_{T+1}(D/K,2,\vec{f}_{T+1})$ 
satisfy that 
$$ \sum_{j=1}^2 f_{w_{T+1},(1,j)} = 2 = \sum_{j=1}^2
f_{w_{T+2},(1,j)}.$$ 
This means that
\begin{eqnarray}
\vec{f}_{w_T,*} &=& (1,0) \text{ or } (0,1), \nonumber \\
\vec{f}_{w_{T+1},*} & = & (2,0) \text{ or } (1,1) \text{ or } (0,2),
\nonumber \\ 
\vec{f}_{w_{T+2},*} & = & (2,0) \text{ or } (1,1) \text{ or }
(0,2). \nonumber 
\end{eqnarray}
Hence
$$\Theta_{T}(D/K,2,\vec{f}_{T}) = 2$$
and
$$\Theta_{T+1}(D/K,2,\vec{f}_{T+1}) =
\Theta_{T+2}(D/K,2,\vec{f}_{T+2}) = 2+\frac{q^4-1}{q^2-1} = 12.$$ 
Therefore we get
\begin{equation}
\label{eq52}
h_2(D/K,\vec{\mathbf{f}}) = 14.
\end{equation}
${}$

Recall
$$\frac{h_1(D/K,\vec{\mathbf{f}})}{q-1} +
\frac{h_2(D/K,\vec{\mathbf{f}})}{q^2-1} +
\frac{h_4(D/K,\vec{\mathbf{f}})}{q^4-1} 
 = \text{Mass}(D/K,\vec{\mathbf{f}}).$$
From the mass formula in Theorem~\ref{31}, we have
$$\text{Mass}(D/K,\vec{\mathbf{f}}) = 169/5.$$
Together with the equations (\ref{eq51}) and (\ref{eq52}), we finally
get 
$$h_1(D/K,\vec{\mathbf{f}}) = 64 \text{ and } h(D/K,\vec{\mathbf{f}})
= 82.$$

\section{Non-principal genera}
\label{sec:06}

In the previous sections, we study the class number $h(R)$ of 
{\it locally free} right ideal classes of $R$ 
(the principal genus). In this section, we 
study the class number of ideal classes of $R$ which are not
necessarily locally free. 
We show that the class number of any genus of 
ideal classes of $R$ can be computed in terms of the that of
locally free ideal classes of another order $R'$, which is still
hereditary.
As a result, our previous results on the computation of the 
class number of the locally free ideal classes of an arbitrary
hereditary order $R$ can be 
extended to that of arbitrary ideal classes of an arbitrary hereditary
order (in $D$). 

Keep the notation as before, in particular $K$, $\infty$, $A$, $D$,
$R$ have the same meaning in the previous sections, 
By a \it right ideal $I$ of $R$ \rm we mean an $A$-lattice $I$ in $D$
which is also a right $R$-submodule of $D$. 
Recall that a genus of right $R$-ideals is a maximal set of right
$R$-ideals 
in $D$ where any two ideals are mutually equivalent locally
everywhere, that is, for two ideals $I$ and $J$,  there exists an
element  
$\alpha_v \in D_v^{\times}$ such that $J_v = \alpha_v \cdot I_v$ for
any finite place $v$ of $K$. Two ideals $I$ and $J$ in a genus $\calL$
are said to be {\it globally equivalent} if there is an element
$\alpha\in 
D^\times$ such that $J=\alpha I$. Let $\calL/\!\sim$ denote the set of
globally equivalent classes in the genus $\calL$ and $h(\calL):=\#
(\calL/\!\sim)$, the cardinality of $\calL/\!\sim$, called the class
number of $\calL$. The set of locally free $R$-ideals forms a genus,
which is called the {\it principal genus}; the others are called
non-principal genera.


Let $v$ be a finite place of $K$.
Suppose the invariant of $R_v$ is 
$\mathbf{f}_v = (f_{v,1},...,f_{v,r_v})$.
Then it is known that (cf.\ \cite{Rei} (39.23) Theorem) any
indecomposable $R_v$-module is one of the form 
$$M_1,M_2,...,M_{r_v}$$
where $M_1 = O_{\Delta_v}^{m_v}$, $M_i = M_{i-1} \cdot
\text{rad}(R_v)$ for $1< i \leq r_v$, and $\text{rad}(R_v)$ is the
Jacobson radical of $R_v$. Recall that $D_v\simeq
\Mat_{m_v}(\Delta_v)$, where $\Delta_v$ is the division part and
$O_{\Delta_v}$ is the maximal order in $\Delta_v$. 
Suppose a right ideal $I$ of $R$ is given.
Then as an $R_v$-module, $I_v$ must be isomorphic to
\begin{equation}
  \label{eq:61}
  M_1^{g_{v,1}^{}} \oplus M_2^{g_{v,2}^{}} \oplus \cdots
\oplus M_{r_v}^{g_{v,r_v}^{}}
\end{equation}
for some $\bfg_v =
(g_{v,1}^{},...,g_{v,r_v}^{})$ with $g_{v,i}^{} \in
\mathbb{Z}_{\geq 0}$ for each $i$ and $\sum_{i=1}^{r_v}
g_{v,i}^{} = m_v$. The datum $\bfg_v^{}$ fixes an
isomorphism class of right $R_v$-ideals, and we call it the
isomorphism type of $I_v$ or the local isomorphism type of $I$ at $v$. 
Therefore we conclude the following result.

\begin{prop}\label{61}
The set of all genera of right $R$-ideals can be parametrized by the
vectors $\bfg:= (\bfg_v)_{v \neq
  \infty}$, where 
\begin{equation}
  \label{eq:62}
  \bfg_v = (g_{v,1}^{},...,g_{v,r_v}^{}) \in
\mathbb{Z}_{\geq 0}^{r_v}\  \text{ with }\  \sum_{i=1}^{r_v}
g_{v,i}^{} = m_v
\end{equation}
and $r_v$ is the period of $R_v$.  
More precisely, let $I({\mathbf{g}}^{})$ be the ideal of $R$
such that for all finite places $v$ 
$$I({\mathbf{g}}^{})_v = M_1^{g_{v,1}^{}} \oplus
M_2^{g_{v,2}^{}} \oplus \cdots \oplus
M_{r_v}^{g_{v,r_v}^{}}.$$ 
Then for any ideal $J$ of $R$, there exists a unique vector
${\mathbf{g}}^{}$ such that $J$ and
$I({\mathbf{g}}^{})$ are in the same genus. 
\end{prop}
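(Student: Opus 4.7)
\medbreak\noindent\textbf{Proof plan for Proposition 6.1.}

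The plan is to carry out a local-to-global argument based on the structure theory of modules over hereditary orders. First I would fix a finite place $v$ and recall the classification of right $R_v$-lattices in $D_v$: by the cited theorem \cite{Rei}~(39.23), the indecomposable right $R_v$-lattices are precisely $M_1,\dots,M_{r_v}$, and every right $R_v$-lattice $I_v$ in $D_v$ decomposes uniquely (up to permutation of summands) as
\begin{equation*}
I_v \;\cong\; M_1^{g_{v,1}}\oplus\cdots\oplus M_{r_v}^{g_{v,r_v}},\qquad g_{v,i}\in\Z_{\ge 0}.
\end{equation*}
The constraint $\sum_i g_{v,i}=m_v$ is a rank computation: tensoring with $K_v$ the free module $R_v$ itself decomposes as $R_v\cong \bigoplus_i M_i^{f_{v,i}}$ with $\sum_i f_{v,i}=m_v$, and since $I_v\otimes_{O_v}K_v=D_v=R_v\otimes_{O_v}K_v$, the number of indecomposable summands (counted with multiplicity) of any $R_v$-lattice spanning $D_v$ must also be $m_v$.

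Next I would translate the notion of local equivalence into the language of $R_v$-module isomorphism. Two right $R_v$-lattices $I_v,J_v$ in $D_v$ satisfy $J_v=\alpha_v I_v$ for some $\alpha_v\in D_v^\times$ if and only if they are isomorphic as right $R_v$-modules; one direction is obvious, the other is the standard fact that an $R_v$-isomorphism of lattices in $D_v$ extends uniquely to a $D_v$-linear automorphism of $D_v$ (i.e.\ left multiplication by some $\alpha_v\in D_v^\times$), using that $D_v$ is simple Artinian and $I_v$ generates $D_v$ as a $K_v$-space. Combined with the uniqueness in the previous paragraph, this shows that the tuple $\mathbf{g}_v=(g_{v,1},\dots,g_{v,r_v})$ is a complete invariant of the $D_v^\times$-orbit of $I_v$. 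Note also that for almost every $v$ the order $R_v$ is maximal, hence $r_v=1$ and the only possible invariant is $\mathbf{g}_v=(m_v)$, so the data $\mathbf{g}=(\mathbf{g}_v)_{v\ne\infty}$ is finite.

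Now I would globalize. Given a right $R$-ideal $J$, define $\mathbf{g}_v$ to be the local invariant of $J_v$ for every finite $v$; these agree with $(m_v)$ for almost all $v$. Conversely, given an admissible tuple $\mathbf{g}$ as in \eqref{eq:62}, construct $I(\mathbf{g})$ by specifying its local components: at each finite $v$ choose a right $R_v$-lattice $I(\mathbf{g})_v\subset D_v$ with isomorphism type $M_1^{g_{v,1}}\oplus\cdots\oplus M_{r_v}^{g_{v,r_v}}$, setting $I(\mathbf{g})_v=R_v$ at the (almost all) places where $\mathbf{g}_v=(m_v)$ and $R_v$ is maximal; then take $I(\mathbf{g}):=D\cap\prod_v I(\mathbf{g})_v$ inside the finite adelic algebra $\widehat{D}$. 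This yields a right $R$-ideal with prescribed local invariants. By the previous paragraph, $J$ lies in the genus of $I(\mathbf{g})$ if and only if $\mathbf{g}_v$ matches the local invariant of $J_v$ at every $v$, which uniquely determines $\mathbf{g}$.

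\medbreak\noindent\emph{Expected main obstacle.} The only nontrivial input is the structure theorem for modules over the hereditary order $R_v$, including the precise list of indecomposables $M_1,\dots,M_{r_v}$ and the Krull--Schmidt type uniqueness of the decomposition; once this is invoked from \cite{Rei}, the remaining content is purely formal. The mildly delicate point is the equivalence of $D_v^\times$-translation with $R_v$-module isomorphism, but it follows immediately from simplicity of $D_v$ as argued above.
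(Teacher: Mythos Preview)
Your proposal is correct and follows essentially the same approach as the paper: the paper's argument is the short discussion immediately preceding the proposition, which cites \cite{Rei}~(39.23) for the list of indecomposable $R_v$-modules and the Krull--Schmidt decomposition (\ref{eq:61}), and then simply declares the local vector $\bfg_v$ to be the invariant of the genus. You spell out a bit more than the paper does --- the rank argument for $\sum_i g_{v,i}=m_v$, the equivalence of $D_v^\times$-translation with $R_v$-module isomorphism, and the adelic construction of $I(\bfg)$ --- but these are exactly the details the paper leaves implicit.
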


\begin{Rem}
When $R_v$ is a maximal order, we have $r_v = 1$ and so $I_v$ must be
isomorphic to $R_v$ as $R_v$-module. Therefore there are only finitely
many genera of $R$. 
\end{Rem}

Let $I$ be a right ideal of $R$, and let $\calL(I)$ be the genus of
right $R$-ideals that contains $I$.
Since any member $J\in \calL(I)$ has the property  $J_v=\alpha_v I_v$,
it follows that 

$$\calL(I) \cong \widehat{D}^{\times} / \widehat{R}_I^{\times}$$
where $R_I$ is the left order of $I$. Therefore,
\begin{equation}
  \label{eq:63}
   \calL(I)/\!\sim\, \cong D^\times \backslash \widehat{D}^{\times} /
\widehat{R}_I^{\times} \cong \Cl(R_I)
\end{equation}
and hence one gets $h(\calL(I))=h(R_I)$, the class number of locally
free right $R$-ideal classes. 

We now describe $R_I$. We may assume that $I=I(\bfg)$ for some vector
$\bfg$ as in (\ref{eq:62}). For each $\bfg_v$ we define another vector
$\bfg_v^{o}$ by removing the zero entries of $\bfg_v$. For example if
$\bfg_v=(3,6,0,1,0)$ (with local period $r_v=5$), then we define the
vector $\bfg^{o}_v$ to be $(3,6,1)$ (with new local period $r_v'=3$).


\begin{prop}\label{62}
For each finite place $v$ of $K$, we have
$$R_{I({\mathbf{g}}^{}),v} =
R_{I({\mathbf{g}}^{})} \otimes_A O_v \cong \text{\rm Mat}(
{\mathbf{g}}_v^{o}, O_{\Delta_v}).$$ 
In particular, $R_{I({\mathbf{g}}^{})}$ is a hereditary $A$-order
in $D$ whose local invariant at $v$ is equal to $\bfg_v^{o}$.
\end{prop}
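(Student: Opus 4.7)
The plan is to compute the completion $R_{I(\mathbf{g}),v}$ at each finite place $v$ and then invoke the local characterization of hereditarity stated in Section~\ref{sec:22}. By definition of the left order,
\[
R_{I(\mathbf{g}),v} = \{x \in D_v : x \cdot I(\mathbf{g})_v \subseteq I(\mathbf{g})_v\},
\]
and since $I(\mathbf{g})_v$ contains a $K_v$-basis of $D_v$, this ring is canonically identified with $\End_{R_v}(I(\mathbf{g})_v)$ acting by left multiplication inside $D_v$. So the problem reduces to a purely local module-theoretic computation of endomorphisms of an explicit direct sum of indecomposable right $R_v$-modules.

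The key input I would invoke is the structure theory of hereditary orders (cf.~Reiner, \S39). The indecomposable projective right $R_v$-modules are $M_1,\dots,M_{r_v}$, and each Hom space $\Hom_{R_v}(M_i,M_j)$ is isomorphic to either $O_{\Delta_v}$ or $\mathfrak{P}_v$, with the combinatorial pattern of these identifications being exactly the one that makes $\End_{R_v}\!\big(\bigoplus_i M_i^{f_{v,i}}\big) = R_v$ recover the matrix presentation $\Mat_{m_v}(\vec f_v,O_{\Delta_v})$ recalled in Section~\ref{sec:22}. Applying the same computation to $I(\mathbf{g})_v = \bigoplus_i M_i^{g_{v,i}}$ produces a block matrix algebra of the same "upper blocks over $O_{\Delta_v}$, strictly lower blocks over $\mathfrak{P}_v$" shape, only now with block sizes dictated by the multiplicities $g_{v,i}$. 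The rows and columns with $g_{v,i}=0$ contribute nothing, so after deleting them one obtains precisely $\Mat(\mathbf{g}_v^o,O_{\Delta_v})$, yielding the first assertion.

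The second assertion is then immediate: the local ring $\Mat(\mathbf{g}_v^o,O_{\Delta_v})$ is by definition a hereditary $O_v$-order of invariant $\mathbf{g}_v^o$, and an $A$-order is hereditary iff each of its completions is, so $R_{I(\mathbf{g})}$ is hereditary with the claimed invariants. The main obstacle I foresee is purely bookkeeping: one must check that the cyclic ordering of the $M_i$ is preserved when the zero-multiplicity indices are deleted, so that the resulting block pattern really matches the standard presentation of a hereditary order. This amounts to the observation that within any cyclic ordering the "upper vs.\ strictly lower" dichotomy is inherited by any subsequence, so no new structural input beyond the local classification is needed.
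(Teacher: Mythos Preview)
Your argument is correct and arrives at the same conclusion, but the route differs in style from the paper's.  The paper works concretely inside $\Mat_{m_v}(\Delta_v)$: it first writes down $I(\mathbf{g})_v$ explicitly as the set of matrices $(Y_{i,j})$ with $Y_{i,j}\in \Mat_{g_{v,i}\times f_{v,j}}(O_{\Delta_v})$ for $i\le j$ and $Y_{i,j}\in \Mat_{g_{v,i}\times f_{v,j}}(\mathfrak{P}_v)$ for $i>j$, and then determines the left order by testing, for an arbitrary $(Z_{i,j})$ with $Z_{i,j}\in\Mat_{g_{v,i}\times g_{v,j}}(\Delta_v)$, when $Z\cdot Y$ lands back in $I(\mathbf{g})_v$.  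Plugging in $Y$'s supported on a single block row forces each $Z_{i,j}$ into $O_{\Delta_v}$ or $\mathfrak{P}_v$ according to whether $i\le j$ or $i>j$, giving $\Mat(\mathbf{g}_v^{o},O_{\Delta_v})$ directly.

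Your approach replaces this bare-hands computation by the identification $R_{I(\mathbf{g}),v}\cong\End_{R_v}\big(\bigoplus_i M_i^{g_{v,i}}\big)$ together with the known description of $\Hom_{R_v}(M_i,M_j)$ from Reiner \S39.  This is cleaner and makes the appearance of the hereditary block pattern conceptually transparent, at the cost of importing that $\Hom$ computation as a black box; the paper's argument is entirely self-contained but hides the module-theoretic reason behind matrix bookkeeping.  Your remark about the cyclic order surviving deletion of zero-multiplicity indices is exactly the point, and is handled in the paper simply by the fact that the block conditions $i\le j$ versus $i>j$ restrict correctly to the nonzero indices.
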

\begin{proof}
Under the identification of $D_v$ and $\text{Mat}_{m_v}(\Delta_v)$,
$I({\mathbf{g}})_v$ consists of elements $(Y_{i,j})_{1\leq i,j \leq
    r_v}$ satisfying that
$$Y_{i,j} \in \begin{cases}
\text{Mat}_{g_{v,i} \times f_{v,j}} (O_{\Delta_v}) & 
\text{ if $i \leq   j$,} \\
\text{Mat}_{g_{v,i} \times f_{v,j}} (\mathfrak{P}_v) & \text{ if $i > 
  j$.} \end{cases}$$
Let $(Z_{i,j})_{1\leq i,j \leq r_v}$ be an element in $D_v$ where
$Z_{i,j} \in 
\text{Mat}_{g_i\times g_j}(\Delta_v)$. Then 
$(Z_{i,j})_{1\leq i,j \leq r_v}$ is in $R_{I({\mathbf{g}}^{}),v}$ if
and 
only if for any $(Y_{i,j})_{1\leq i,j \leq r_v} \in
I({\mathbf{g}})_v$, 
\begin{equation}
\label{eq:64}
\sum_{k=1}^{r_v} Z_{i,k}\cdot Y_{k,j} \in 
\begin{cases}
\text{Mat}_{g_{v,i} \times f_{v,j}} (O_{\Delta_v}) 
 & \text{ if $i \leq j$,} \\
\text{Mat}_{g_{v,i} \times f_{v,j}} (\mathfrak{P}_v) 
& \text{ if $i >
  j$.} 
\end{cases} 
\end{equation} 
For $1\leq k \leq r_v$,
plugging elements $(Y_{i,j})_{1\leq i,j \leq r_v}$ of
$I({\mathbf{g}})_v$ with $Y_{i,j} = 0$ if $i \neq k$ in
(\ref{eq:64}), 
we get  
$$Z_{i,k}\cdot Y_{k,j} \in 
\begin{cases}
\text{Mat}_{g_{v,i} \times f_{v,j}} (O_{\Delta_v}) 
& \text{ if $i \leq 
  j$,} \\
\text{Mat}_{g_{v,i} \times f_{v,j}} (\mathfrak{P}_v) 
& \text{ if $i > 
  j$.} \end{cases}$$ 
This implies that $(Z_{i,j})_{1\leq i,j \leq r_v}$ is in
$R_{I(\bfg),v}$ if and only if  
$$Z_{i,j} \in \begin{cases}
\text{Mat}_{g_{v,i} \times g_{v,j}} (O_{\Delta_v}) 
& \text{ if $i \leq
  j$,} \\
\text{Mat}_{g_{v,i} \times g_{v,j}} (\mathfrak{P}_v) 
& \text{ if $i >
  j$.} \end{cases}$$
Therefore $R_{I(\bfg),v}$ is a hereditary $O_v$-order with
invariant $\mathbf{g}_v^{o}$.
\qed
\end{proof}


Together with the parametrization of the genera of $R$-ideals in
Proposition~\ref{61}, we obtain the following result. 

\begin{thm}\label{63}
The total class number of right ideal classes of $R$ is equal to
$$\sum_{{\mathbf{g}}^{}} h({\mathbf{g}}^{}),$$
where $\bfg$ runs
through the vectors described in Proposition~\ref{61} and
$h({\mathbf{g}}^{}):=h(R_{I({\mathbf{g}}^{})})$.
\end{thm}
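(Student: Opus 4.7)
The plan is to combine Propositions~\ref{61} and \ref{62} with the adelic identification in (\ref{eq:63}) to reduce the count to a sum of principal-genus class numbers.

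First, I would observe that the set of all right ideal classes of $R$ partitions as a disjoint union over genera. By Proposition~\ref{61}, the genera are parametrized bijectively by the vectors $\bfg = (\bfg_v)_{v\ne\infty}$ satisfying (\ref{eq:62}); every right $R$-ideal $J$ belongs to the genus $\calL(I(\bfg))$ for a unique such $\bfg$. Hence
\[
\{\text{right ideal classes of } R\}\ =\ \coprod_{\bfg}\, \calL(I(\bfg))/\!\sim,
\]
and the total class number equals $\sum_{\bfg} h(\calL(I(\bfg)))$.

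Second, I would fix $\bfg$ and identify the class number of the genus $\calL(I(\bfg))$ with a principal-genus class number. Since every member of $\calL(I(\bfg))$ agrees locally with $I(\bfg)$ up to a unit in $D_v^\times$, the adelic description yields
\[
\calL(I(\bfg))\ \cong\ \widehat{D}^\times/\widehat{R}_{I(\bfg)}^\times,
\]
where $R_{I(\bfg)}$ is the left order of $I(\bfg)$; taking global equivalence classes gives the identification (\ref{eq:63}):
\[
\calL(I(\bfg))/\!\sim\ \cong\ D^\times\backslash \widehat{D}^\times/\widehat{R}_{I(\bfg)}^\times\ =\ \Cl(R_{I(\bfg)}).
\]
Thus $h(\calL(I(\bfg))) = h(R_{I(\bfg)})$, the number of locally free right $R_{I(\bfg)}$-ideal classes.

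Third, by Proposition~\ref{62}, the left order $R_{I(\bfg)}$ is itself a hereditary $A$-order in $D$, with local invariant $\bfg_v^o$ at every finite place $v$. In particular, $h(R_{I(\bfg)}) = h(\bfg)$ by the definition introduced just before the statement of the theorem. Summing over all $\bfg$ yields the claimed formula. The only non-routine point to check is that (\ref{eq:63}) indeed realizes a genus as a single adelic double coset space for the left order $R_{I(\bfg)}$; this is precisely the content of the adelic reformulation in Section~\ref{sec:31}, and there is no further obstacle.
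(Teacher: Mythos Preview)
Your proposal is correct and follows exactly the paper's approach: the theorem is stated immediately after Proposition~\ref{62} with the sentence ``Together with the parametrization of the genera of $R$-ideals in Proposition~\ref{61}, we obtain the following result,'' and your argument simply makes explicit the combination of Proposition~\ref{61}, the identification~(\ref{eq:63}), and Proposition~\ref{62} that the paper leaves implicit.
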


\begin{Rem}
After computing the class
number of $R_{I({\mathbf{g}}^{})}$ for each $\mathbf{g}$, 
we obtain the total class number of right ideal classes of $R$. 
\end{Rem}

\section{Local optimal embeddings}\label{sec:07}

\subsection{}
Fix a non-Archimedean local field $F$.
Let $D$ be a finite dimensional central simple algebra over $F$.
Take any simple left $D$-submodule $V$ of $D$. 
Let $\Delta = \big(\text{End}_D(V)\big)^{\text{op}}$, which is a
central division algebra over $F$. 
Then $V$ can be viewed as a free right $\Delta$-module, and
$D$ is canonically isomorphic to $\text{End}_{\Delta}(V)$.
Let $m := \text{rank}_{\Delta}(V)$. Then
$$D \cong \text{Mat}_m(\Delta).$$
Let $d$ be the positive integer such that $d^2 = [\Delta : F]$.
For any field extension $F'$ over $F$, there exists an embedding of
$F'$ into $D$ if and only if $[F':F]$ divides $n:= md$. 
In general, let
$$L = L_1\times L_2 \times \cdots \times L_{\ell}$$
where for $1\leq w \leq \ell$, $L_w$ is a finite field extension 
of $F$. Then

\begin{lem}\label{71} 
$(1)$ There exists an $F$-algebra embedding $\iota: L \hookrightarrow
D \cong \text{\rm End}_{\Delta}(V)$ if and only if there exists
positive integers $m_1,...,m_{\ell}$ such that 
$$m_1+\cdots +m_{\ell} = m = \text{\rm rank}_{\Delta}(V) \quad
\text{and}\quad [L_w:F] \mid m_w d.$$ 
$(2)$ Given two embeddings $\iota_1$ and $\iota_2$ of $L$ into $D$.
There exists an element $g$ in $D^{\times}$ such that
$$\iota_1(\alpha) = g^{-1} \iota_2(\alpha) g, \text{ } \forall \alpha
\in L$$ 
if and only if $\text{\rm rank}_{\Delta}(\iota_1(e_w)V) = \text{\rm
  rank}_{\Delta}(\iota_2(e_w)V)$ for $1\leq w \leq \ell$, 
where $e_w$ is the idempotent in $L$ corresponding to $L_w$.
\end{lem}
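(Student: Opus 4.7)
Both parts will be handled by passing to the idempotent decomposition of $L$. An $F$-algebra embedding $\iota : L \hookrightarrow D = \End_\Delta(V)$ is the same data as a left $L$-module structure on $V$ commuting with the right $\Delta$-action. The primitive idempotents $e_w$ of $L$ then split $V = \bigoplus_{w=1}^\ell V_w$ where $V_w := \iota(e_w)V$ is a right $\Delta$-module on which $L$ acts through $L \twoheadrightarrow L_w$. Conversely, any such decomposition of $V$ into $L_w$--$\Delta$-bimodules assembles into an embedding $\iota$. Setting $m_w := \rank_\Delta V_w$ yields $m_1+\cdots+m_\ell = m$.

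For part (1), necessity and sufficiency now reduce to the claim: an $L_w$--$\Delta$-bimodule of $\Delta$-rank $m_w$ exists if and only if $[L_w:F]\mid m_w d$. Using the local structure of central simple algebras (cf.\ \cite[(31.9) Theorem]{Rei}), one has $L_w \otimes_F \Delta \cong \Mat_{t_w}(\Delta_w')$ with $t_w = \gcd([L_w:F],d)$ and $\Delta_w'$ a central division algebra over $L_w$ of degree $d/t_w$. The unique simple left $\Mat_{t_w}(\Delta_w')$-module has $F$-dimension $t_w\cdot(d/t_w)^2\cdot[L_w:F] = d^2\cdot[L_w:F]/t_w$, hence $\Delta$-rank $[L_w:F]/t_w$. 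The possible nonzero $\Delta$-ranks of $L_w$--$\Delta$-bimodules are therefore exactly the positive multiples of $[L_w:F]/t_w$, and the divisibility $([L_w:F]/t_w)\mid m_w$ is the same as $[L_w:F]\mid m_w d$ since $t_w=\gcd([L_w:F],d)$.

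For part (2), a conjugation $\iota_1(\alpha)=g^{-1}\iota_2(\alpha)g$ is precisely a $\Delta$-linear automorphism of $V$ intertwining the two $L$-actions; such an automorphism must send $\iota_1(e_w)V$ to $\iota_2(e_w)V$, forcing the rank equality. Conversely, suppose $\rank_\Delta\iota_1(e_w)V = \rank_\Delta\iota_2(e_w)V$ for every $w$. By the Wedderburn classification of modules over the simple algebra $\Mat_{t_w}(\Delta_w')$, isomorphism class is determined by $\Delta$-rank, so the $L_w$--$\Delta$-bimodule structures on $\iota_1(e_w)V$ and $\iota_2(e_w)V$ are isomorphic. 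Choosing a $\Delta$-linear $L_w$-equivariant isomorphism $\phi_w$ for each $w$ and taking $g := \bigoplus_w \phi_w \in \End_\Delta(V)^\times = D^\times$ provides the desired conjugation.

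The main technical input is the structure theorem $L_w \otimes_F \Delta \cong \Mat_{t_w}(\Delta_w')$, which quantifies how the local index of $\Delta$ drops under base change to $L_w$. Once this is in hand, everything reduces to the Morita/Wedderburn classification of modules over a matrix algebra over a division algebra, for which isomorphism is detected by $F$-dimension alone, i.e.\ by $\Delta$-rank. A minor bookkeeping step is the translation between divisibility by $[L_w:F]/t_w$ and by $[L_w:F]/d$ that yields the clean form $[L_w:F]\mid m_w d$ appearing in the statement.
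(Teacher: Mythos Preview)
Your proof is correct and follows essentially the same route as the paper: decompose $V$ via the idempotents $e_w$, reduce to the field case, and use that $L_w\otimes_F\Delta$ is simple so that $(L_w,\Delta)$-bimodules are classified by their $\Delta$-rank. Your treatment of part~(1) is in fact more explicit than the paper's (you compute the $\Delta$-rank $[L_w:F]/t_w$ of the simple bimodule and verify the divisibility equivalence), whereas the paper simply invokes the standard fact that $L_w$ embeds in a degree-$m_wd$ central simple algebra over the local field $F$ iff $[L_w:F]\mid m_wd$; but the underlying strategy is identical.
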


\begin{proof}
Suppose there exists an $F$-algebra embedding $\iota: L
\hookrightarrow \text{End}_{\Delta}(V)$. 
For $1\leq w \leq \ell$, let $e_w$ be the idempotent in $L$
corresponding to $L_w$. 
Then $\iota(e_w)V \neq 0$ and $\iota$ induces an embedding of $L_w$
into $\text{End}_{\Delta}(\iota(e_w)V)$. 
Set $m_w:= \text{rank}_{\Delta}(\iota(e_w)V)$ for each $w$.
We have
$$m_1+\cdots +m_{\ell} = m \text{ }\text{ and } \text{ }[L_w:F] \mid
m_w d.$$ 

Conversely, suppose we can find positive integers $m_1,...,m_{\ell}$
satisfying the desired property. 
Consider any decomposition
$$V = V_1 \oplus V_2 \oplus \cdots \oplus V_{\ell}$$ 
with $\text{rank}_{\Delta}(V_w) = m_w$.
There exists $\iota_w : L_w \hookrightarrow \text{End}_{\Delta}(V_w)$
for $1\leq w \leq \ell$. 
Then the composition of $\iota_1\times \cdots \times \iota_{\ell}$ and
the natural embedding of $\prod_{w=1}^{\ell}
\text{End}_{\Delta}(V_w)$ into $\text{End}_{\Delta}(V)$ gives an
embedding $\iota: L \hookrightarrow D$. 
Therefore the proof of $(1)$ is complete.  

For $(2)$, if there exists $g \in D^{\times}$ such that
$$\iota_1(\alpha) = g^{-1} \iota_2(\alpha) g,$$
then for $1\leq w \leq \ell$,
$$\text{rank}_{\Delta}(\iota_1(e_w)V) =
\text{rank}_{\Delta}(g^{-1}\iota_2(e_w)V) =
\text{rank}_{\Delta}(\iota_2(e_w)V).$$ 
Conversely, suppose $\text{\rm rank}_{\Delta}(\iota_1(e_w)V) =
\text{\rm rank}_{\Delta}(\iota_2(e_w)V)$ for $1\leq w \leq \ell$. 
Then $\iota_1(e_w)V$ and $\iota_2(e_w)V$ are isomorphic as
$(L_w,\Delta)$-bimodules since $\Delta \otimes_F L_w$ is a central
simple algebra over $L_w$. 
Let $g_w: \iota_1(e_w)V \isoto \iota_2(e_w)V$ be an
isomorphism of $(L_w,\Delta)$-bimodules. 
Then the element $g$ in $D^{\times}$ corresponding to the automorphism
$$g_1\times \cdots \times g_{\ell}: V = \bigoplus_{w=1}^{\ell}
\iota_1(e_w)V \rightarrow \bigoplus_{w=1}^{\ell} \iota_2(e_w)V = V$$ 
satisfies that
$$\iota_1(\alpha) = g^{-1} \iota_2(\alpha) g, \text{ } \forall \alpha
\in L.$$ 
This completes the proof of $(2)$. \qed
\end{proof}

The vector $(m_1,...,m_{\ell})$ where $m_w =
\text{rank}_{\Delta}(\iota(e_w)V)$ is called the \it type of the
embedding $\iota: L \hookrightarrow D$. \rm  

Now, we fix an $F$-algebra embedding $\iota:L = L_1\times \cdots
\times L_{\ell} \hookrightarrow D$ of type $(m_1,...,m_{\ell})$. 
Denote by $O_F$ (resp.\ $O_{L_w}$) the valuation ring of $F$ 
(resp.\ $L_w$) 
and set
$$O_L:= O_{L_1}\times \cdots \times O_{L_{\ell}}.$$
For any $O_F$-order $R$ of $D$, we call $\iota$ \it an optimal
embedding of $O_L$ into $R$ \rm if  
$$\iota(L) \cap R = \iota(O_L).$$
Two optimal embeddings $\iota_1$ and $\iota_2$ of the same type are
called \it equivalent modulo $R^{\times}$ \rm 
if there exists an element $u \in R^{\times}$ such that 
$$\iota_1(\alpha) = u \cdot \iota_2(\alpha) \cdot u^{-1}, \text{ }
\forall \alpha \in L.$$ 
Then Lemma \ref{71} implies that

\begin{lem}\label{72}
The set of equivalence classes of optimal embeddings of type
$(m_1,...m_{\ell})$ from $O_L$ into $R$ modulo $R^{\times}$ can be
identified with 
$$C_{\iota}(L)^{\times} \backslash \mathcal{E}_{\iota}(R) / R^{\times}
$$ 
where $C_{\iota}(L)$ is the centralizers of $\iota(L)$ in $D$ and
\begin{equation}
  \label{eq:71}
  \mathcal{E}_{\iota}(R):= \{ g \in D^{\times} : \iota(L)\cap g R
g^{-1} = \iota(O_L)\}.
\end{equation}
\end{lem}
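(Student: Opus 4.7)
The plan is to construct the bijection by sending an optimal embedding $\iota'\colon L\hookrightarrow D$ of the given type to a class of elements $g\in D^\times$ that intertwine $\iota$ and $\iota'$, and to show this descends to the stated double coset space.

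First I would fix the embedding $\iota$ of type $(m_1,\dots,m_\ell)$ as in the statement, and let $\iota'$ be an arbitrary optimal embedding of $O_L$ into $R$ of the same type. By Lemma~\ref{71}(2) applied to $\iota$ and $\iota'$, there exists $g\in D^\times$ with $\iota'(\alpha)=g^{-1}\iota(\alpha)g$ for all $\alpha\in L$. The optimality condition $\iota'(L)\cap R=\iota'(O_L)$ then reads $g^{-1}\iota(L)g\cap R=g^{-1}\iota(O_L)g$, and conjugation by $g$ transforms this into $\iota(L)\cap gRg^{-1}=\iota(O_L)$, i.e.\ $g\in \mathcal{E}_\iota(R)$. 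Conversely, for any $g\in\mathcal{E}_\iota(R)$ the map $\iota_g(\alpha):=g^{-1}\iota(\alpha)g$ is an embedding, and since $\iota_g(e_w)V=g^{-1}\iota(e_w)V$ has the same $\Delta$-rank as $\iota(e_w)V$, it is of the same type as $\iota$; moreover $\iota_g(L)\cap R=g^{-1}(\iota(L)\cap gRg^{-1})g=g^{-1}\iota(O_L)g=\iota_g(O_L)$, so $\iota_g$ is optimal. This gives the set-level surjection $\mathcal{E}_\iota(R)\twoheadrightarrow\{\text{optimal embeddings of type }(m_1,\dots,m_\ell)\}$.

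Next I would analyze the fibers. The ambiguity in choosing $g$ for a fixed $\iota'$ is exactly left multiplication by the centralizer: if $g_1,g_2$ both intertwine $\iota$ and $\iota'$, then $g_2g_1^{-1}$ commutes with $\iota(L)$, hence $g_2\in C_\iota(L)^\times g_1$. On the other hand, two optimal embeddings $\iota_1,\iota_2$ are $R^\times$-equivalent iff there is $u\in R^\times$ with $\iota_1(\alpha)=u\iota_2(\alpha)u^{-1}$; writing $\iota_j(\alpha)=g_j^{-1}\iota(\alpha)g_j$, a direct computation shows that $g_2u^{-1}g_1^{-1}$ commutes with $\iota(L)$, so $g_2\in C_\iota(L)^\times g_1 u\subset C_\iota(L)^\times g_1 R^\times$, and conversely any such relation gives the required $u$. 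Combining the two observations, the association $\iota'\mapsto [g]$ yields a well-defined bijection between equivalence classes of optimal embeddings of type $(m_1,\dots,m_\ell)$ modulo $R^\times$ and $C_\iota(L)^\times\backslash \mathcal{E}_\iota(R)/R^\times$.

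There is no real obstacle here beyond keeping the conjugation conventions straight; the only subtle point is verifying that every element of $\mathcal{E}_\iota(R)$ produces an embedding of the \emph{prescribed} type, which is immediate because $g^{-1}$ preserves $\Delta$-ranks of the images of the idempotents $e_w$. Everything else is a routine diagram chase using Lemma~\ref{71}.
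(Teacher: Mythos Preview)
Your proof is correct and is precisely the argument the paper has in mind: the paper does not spell out a proof but simply says ``Then Lemma~\ref{71} implies that'' before stating Lemma~\ref{72}, and your write-up is exactly the routine verification using Lemma~\ref{71}(2) together with the definition of $\mathcal{E}_\iota(R)$ and of $R^\times$-equivalence. The only point worth recording, which you handled, is that conjugation by $g$ preserves the type so that every $g\in\mathcal{E}_\iota(R)$ really does produce an embedding of the prescribed type $(m_1,\dots,m_\ell)$.
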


Suppose $R$ is \it hereditary, \rm i.e.\ any right (or equivalently
left) ideal of $R$ is projective as an $R$-module. Let $O_{\Delta}$ be 
the maximal compact subring of $\Delta$ and $\mathfrak{P}_{\Delta}$ be
the maximal two-sided ideal of $O_{\Delta}$. Then there exists a
vector $\vec{f}_R = (f_1,...,f_r) \in \mathbb{Z}_{> 0}^r$ such that 
$$R \cong \text{Mat}_m(\vec{f}_R, O_{\Delta}),$$
where the ring $\text{Mat}_m(\vec{f}_R, O_{\Delta})$ 
consists of elements $(X_{i,j})_{1\leq i, j \leq r}$ in
$\text{Mat}_{m}(O_{\Delta})$ such that
$$X_{i,j} \in 
\begin{cases}
\text{Mat}_{f_{i} \times f_{j}} (O_{\Delta}) 
  & \text{ if $i \leq j$,} \\
\text{Mat}_{f_{i} \times f_{j}} (\mathfrak{P}_{\Delta}) 
  & \text{ if $i > j$.} 
\end{cases}$$
The number $r$ is called the {\it period} of $R$;  
the vector $\vec{f}_R:= (f_{1},...,f_{r})$ is called the 
{\it invariant of $R$}, which is uniquely determined by $R$ 
up to cyclic permutations. 
When $r = m$ and $f_i = 1$ for all $i$, the order $R$ is 
an Iwahori order, which is conjugate to 
the preimage of the set of upper triangular
matrices over 
$\mathbb{F}_{\Delta} := O_{\Delta}/\mathfrak{P}_{\Delta}$.
Next, we  connect optimal embeddings of a given type from $O_L$ into a
hereditary $O_F$-order $R$ of $D$ modulo $R^{\times}$ with the \lq\lq
flags\rq\rq\ of $(O_L,O_{\Delta})$-bimodules. 

\subsection{$(O_L,O_{\Delta})$-flags}\label{sec1.1}

Recall that $D \cong \text{End}_{\Delta}(V)$ and $V$ is a free right
$\Delta$-module of $\text{rank}_{\Delta}(V) = m$.  

\begin{defn}\label{73}
Let $N_1,...,N_{r}$ be free right $O_{\Delta}$-modules in $V$ of
$O_{\Delta}$-rank $m$. 
We call $N_*:= (N_1,...,N_{r})$ \it an $O_{\Delta}$-flag in $V$ of type
$\vec{f} = (f_1,...,f_r) \in \mathbb{Z}_{> 0 }^r$ \rm if 
\begin{itemize}
\item[(1)] $N_1 \supsetneq N_2 \supsetneq \cdots \supsetneq N_{r}
  \supsetneq N_{r+1}:=N_1 \mathfrak{P}_{\Delta}$. 
\item[(2)] $\text{dim}_{\mathbb{F}_{\Delta}} (N_{i}/N_{i+1})=f_{i}$
  for $1\leq i \leq r$. 
\end{itemize}
The set $\text{Hom}_{O_{\Delta}}(N_*, N_*')$ of morphisms of
$O_{\Delta}$-flags in $V$ of type $\vec{f}$ consists of endomorphisms $\phi
\in \text{End}_{\Delta}(V)$ such that $\phi(N_i) \subseteq N_i'$ for
$1\leq i \leq r$. 
\end{defn}

Fix a $\Delta$-basis $\{x_1,...,x_m\}$ of $V$.
Given a vector $\vec{f} = (f_{1},...,f_{r}) \in \mathbb{Z}_{>0}^r$,
set 
$$f_{(i)}:= \sum_{j = 1}^i f_{j} \quad \text{for } 1\leq i 
\leq r, \text{ and } f_{(0)}:= 0.$$
Then we have a strictly increasing sequence of integers 
$0 = f_{(0)} < f_{(1)}<\cdots < f_{(r)} = m$. 
For $1 \leq i \leq r$, let 
$$M_i:= \left( \bigoplus_{k = 1}^{m-f_{(i-1)}} x_{k}
  O_{\Delta}\right) \oplus 
             \left( \bigoplus_{k = m-f_{(i-1)}+1}^{m} x_{k}
               \mathfrak{P}_{\Delta}\right).$$ 
Then $M_*(\vec{f}):= (M_1,...,M_r)$ is an $O_{\Delta}$-flag in $V$ of type
$\vec{f}$, and 
$$\text{End}_{O_{\Delta}}(M_*(\vec{f})) \cong
\text{Mat}_{m}(\vec{f},O_{\Delta}).$$ 
It is clear that every $O_{\Delta}$-flag in $V$ 
of type $\vec{f}$ is of the form 
$$g M_*(\vec{f}):=(g M_1,...,g M_{r})$$ 
where $g$ is in $\text{Aut}_{\Delta}(V) \cong D^{\times}$, and 
$$\text{End}_{O_{\Delta}}(gM_*(\vec{f})) = g
\text{End}_{O_{\Delta}}(M_*(\vec{f})) g^{-1}.$$ 
${}$

Suppose an $F$-algebra embedding $\iota:L \hookrightarrow D$ of type
$(m_1,...,m_{\ell})$ is given.  

\begin{defn}\label{74}
Let $N_* = (N_1,...,N_r)$ be an $O_{\Delta}$-flag in $V$ of type $\vec{f}$. 
We call $N_*$ an \it $(O_{L},O_{\Delta})$-flag in $V$ of type
$(\iota,\vec{f})$ \rm if 
$$\iota(O_{L})\cdot N_i \subseteq N_i \text{ for all $1\leq i \leq
  r$.}$$ 
The set $\text{Hom}_{(O_{L},O_{\Delta})}(N_*, N_*')$ of morphisms of
$(O_{L},O_{\Delta})$-flags in $V$ of type $(\iota,\vec{f})$ consists of
morphisms in $\text{Hom}_{O_{\Delta}}(N_*, N_*')$ which commute with
$\iota(O_{L})$. 
\end{defn}

Let $R$ be a hereditary $O_F$-order of $D$ with invariant
$\vec{f}_R$. This means that there exists an isomorphism from $D$ to
$\text{Mat}_m(\Delta)$ that sends $R$ onto the order
$\text{Mat}_m(\vec{f}_R,O_{\Delta})$.  
Given $g \in D^{\times}$, suppose $g M_*(\vec{f}_R)$ is an
$(O_{L},O_{\Delta})$-flag in $V$ of type $(\iota,\vec{f}_R)$. 
Then we must have
$$g^{-1} \iota(O_{L}) g \subset
\text{End}_{O_{\Delta}}(M_*(\vec{f}_R)) = R,$$ 
which means that $g$ is in $\mathcal{E}_{\iota}(R)$ (see
(\ref{eq:71})).  
Two $(O_{L},O_{\Delta})$-flags $g_1M_*(\vec{f}_R)$ and
$g_{2}M_*(\vec{f}_R)$ in $V$ of type $(\iota,\vec{f}_R)$ are isomorphic if
and only if there exists an element $h$ in $C_{\iota}(L)^{\times}$
such that 
$g_{2}^{-1} h g_{1} \in R^{\times}$.
If we denote by $\text{\bf FL}(O_{L},O_{\Delta},m,\iota,\vec{f})$ the
set of isomorphism classes of $(O_{L},O_{\Delta})$-flags of a given
type $(\iota,\vec{f})$ in $V$, then we conclude that 

\begin{prop}\label{75}
We have the following bijection
\begin{equation}
  \label{eq:72}
  \begin{tabular}{ccc}
$C_{\iota}(L)^{\times} \backslash \mathcal{E}_{\iota}(R) / R^{\times}$
& $\longleftrightarrow$ & $\text{\bf
  FL}(O_{L},O_{\Delta},m,\iota,\vec{f}_R)$ \\ 
$C_{\iota}(L)^{\times} g R^{\times}$ & $\longmapsto$ &
$[gM_*(\vec{f}_R)]$. 
\end{tabular}
\end{equation}

Moreover, let $g M_*(\vec{f}_R)$ be an 
$(O_{L},O_{\Delta})$-flag in $V$ of type $(\iota,\vec{f}_R)$.
One has
$$ C_{\iota}(L) \cap g R g^{-1} \cong \text{\rm
  End}_{(O_{L},O_{\Delta})}(g M_*(\vec{f}_R))$$ 
and
$$C_{\iota}(L)^{\times} \cap g R^{\times} g^{-1} \cong \text{\rm
  Aut}_{(O_{L},O_{\Delta})}(g M_*(\vec{f}_R)).$$ 
\end{prop}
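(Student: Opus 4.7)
The plan is to use the dictionary $R = \text{End}_{O_\Delta}(M_*(\vec{f}_R))$ built into the definition of $R = \text{Mat}_m(\vec{f}_R,O_\Delta)$ acting on $V$, together with its conjugate form $gRg^{-1} = \text{End}_{O_\Delta}(gM_*(\vec{f}_R))$ and $gR^\times g^{-1} = \text{Aut}_{O_\Delta}(gM_*(\vec{f}_R))$ for every $g \in D^\times = \text{Aut}_\Delta(V)$. Recall from the discussion preceding Definition~\ref{74} that every $O_\Delta$-flag of type $\vec{f}_R$ is of the form $gM_*(\vec{f}_R)$ for some $g$, so the content of the bijection is precisely to translate ``optimal embedding'' to ``$(O_L,O_\Delta)$-stability of the flag''.

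First I will check that the assignment $g \mapsto [gM_*(\vec{f}_R)]$ is well-defined on $\mathcal{E}_\iota(R)$ and factors through the double quotient. The inclusion $\iota(O_L) \subseteq \iota(L) \cap gRg^{-1}$ which is part of $g \in \mathcal{E}_\iota(R)$ implies $\iota(O_L) \cdot gM_i \subseteq gM_i$ for each $i$, so $gM_*(\vec{f}_R)$ is an $(O_L,O_\Delta)$-flag of type $(\iota,\vec{f}_R)$. Replacing $g$ by $hgu$ with $h \in C_\iota(L)^\times$ and $u \in R^\times$, the equality $uM_*(\vec{f}_R) = M_*(\vec{f}_R)$ (since $R^\times$ is the stabilizer of $M_*(\vec{f}_R)$) and the fact that left multiplication by $h$ commutes with $\iota(L) \supseteq \iota(O_L)$ together yield an isomorphism $gM_*(\vec{f}_R) \isoto hguM_*(\vec{f}_R)$ of $(O_L,O_\Delta)$-flags.

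For surjectivity, given any $(O_L,O_\Delta)$-flag $gM_*(\vec{f}_R)$ of type $(\iota,\vec{f}_R)$, the stability condition yields $\iota(O_L) \subseteq gRg^{-1}$. The reverse inclusion $\iota(L) \cap gRg^{-1} \subseteq \iota(O_L)$ will follow from integrality: every element $x \in \iota(L) \cap gRg^{-1}$ satisfies an integral equation over $O_F$ (since $gRg^{-1}$ is a finitely generated $O_F$-module stable under multiplication by $x$), and the integral closure of $O_F$ in $\iota(L)$ is exactly $\iota(O_L) = \prod_w \iota(O_{L_w})$; this forces $g \in \mathcal{E}_\iota(R)$. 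For injectivity, an isomorphism $\phi\colon g_1 M_*(\vec{f}_R) \isoto g_2 M_*(\vec{f}_R)$ of $(O_L,O_\Delta)$-flags is by definition an element of $\text{Aut}_\Delta(V) = D^\times$ commuting with $\iota(O_L)$, and since $\iota(L) = F \cdot \iota(O_L)$ this means $\phi \in C_\iota(L)^\times$; the condition $\phi g_1 M_*(\vec{f}_R) = g_2 M_*(\vec{f}_R)$ then gives $g_2^{-1}\phi g_1 \in R^\times$, so $g_1 \in C_\iota(L)^\times g_2 R^\times$.

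The final two assertions are almost formal: combining $gRg^{-1} = \text{End}_{O_\Delta}(gM_*(\vec{f}_R))$ with the observation that an element of $D$ commutes with $\iota(L)$ iff it commutes with $\iota(O_L)$ gives $C_\iota(L) \cap gRg^{-1} = \text{End}_{(O_L,O_\Delta)}(gM_*(\vec{f}_R))$, and passing to units (i.e., intersecting with $D^\times$) produces the automorphism statement. The main point requiring care is the optimality clause in surjectivity, where one needs the integrality argument above to rule out $O_F$-orders strictly between $\iota(O_L)$ and $\iota(L)$; without it the map would only land in a coarser quotient classifying flags by their stabilizer orders rather than by the optimal embedding condition.
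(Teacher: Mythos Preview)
Your proof is correct and follows essentially the same route as the paper, which establishes the proposition in the discussion immediately preceding its statement (identifying $R = \text{End}_{O_\Delta}(M_*(\vec f_R))$, checking that $(O_L,O_\Delta)$-stability of $gM_*(\vec f_R)$ forces $g^{-1}\iota(O_L)g\subset R$, and matching isomorphism of flags with the double coset relation). You are in fact more careful than the paper on one point: the paper asserts without comment that $\iota(O_L)\subseteq gRg^{-1}$ already gives $g\in\mathcal E_\iota(R)$, whereas you correctly supply the integrality argument (elements of $\iota(L)\cap gRg^{-1}$ are integral over $O_F$, hence lie in the maximal order $\iota(O_L)$) that closes this gap.
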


\begin{Rem}
For any two embeddings $\iota_1$ and $\iota_2$ of $L$ into $D$, the
sets $\text{\bf FL}(O_{L},O_{\Delta},m,\iota_1,\vec{f}_R)$
and $\text{\bf FL}(O_{L},O_{\Delta},m,\iota_2,\vec{f}_R)$ are
isomorphic if $\iota_1$ and $\iota_2$ have the same type. 
\end{Rem}

\subsection{Decomposition of $\text{\bf
    FL}(O_{L},O_{\Delta},m,\iota,\vec{f})$} 

In this subsection, we fix an $F$-algebra embedding $\iota$ of $L =
L_1\times \cdots \times L_{\ell}$ into $D$ of type
$(m_1,...,m_{\ell})$. 
The centralizer $C_{\iota}(L)$ of $\iota(L)$ in $D$ is canonically
isomorphic to 
$$\prod_{w=1}^{\ell} \text{End}_{(L_w,\Delta)}(\iota(e_w)V),$$
where $e_w$ is the idempotent in $L$ corresponding to $L_w$,
and $\text{End}_{(L_w,\Delta)}(\iota(e_w)V)$ is the endomorphism ring
of the $(L_w,\Delta)$-bimodule $\iota(e_w)V$, which is a central
simple algebra over $L_w$.  

Let $N_* = (N_1,...,N_r)$ be an $(O_L,O_{\Delta})$-flag in $V$ of type
$(\iota,\vec{f})$. 
For each $w$ with $1\leq w \leq \ell$, we have the chain of lattices
$$\iota(e_w)N_1 \supseteq \cdots \iota(e_w)N_r \supseteq
\iota(e_w)N_{r+1} = \iota(e_w)N_1\mathfrak{P}_{\Delta} \text{ in
  $V$}.$$ 
Put $f_{w,i}:=
\text{dim}_{\mathbb{F}_{\Delta}}(\iota(e_w)N_i/\iota(e_w)N_{i+1})$.
From $N_i=\oplus_w \iota(e_w)N_i$ it
follows that
\begin{equation}
  \label{eq:73}
  \sum_{w=1}^{\ell}f_{w,i} = f_i\quad \text{and}
  \quad \sum_{i=1}^r f_{w,i}=m_w. 
\end{equation}
We denote by $\vec{f}_w^o$ the vector obtained by removing zero
entries of 
$\vec{f}_w=(f_{w,1},...,f_{w,r})$, i.e. \ 
$$\vec{f}_w^o = (f_{w,j_1},...,f_{w,j_{r_w}})$$
where $1\leq j_1 <\cdots < j_{r_w} \leq r$, $f_{w,j_k}>0$, and
$f_{w,j} = 0$ for $j \neq j_1,...,j_{r_w}$. 
Then
\begin{equation}
  \label{eq:74}
  \iota(e_w)N_*:= (\iota(e_w)N_{j_1},...,\iota(e_w)N_{j_{r_w}})
\end{equation}
is an $(O_{L_w},O_{\Delta})$-flag in $\iota(e_w)V$ of type
$(\iota_w,\vec{f}_w^o)$, 
where $\iota_w$ is the restriction of $\iota$ on $O_{L_w}$. 

Conversely, suppose we have an $(O_{L_w},O_{\Delta})$-flag $N_{i,*}
=(N_{w,1},...,N_{w,r_w})$ in $\iota(e_w)V$ of type
$(\iota,\vec{f}_w^o)$ for $1\leq w \leq \ell$, where 
$\vec{f}_w^o$ is obtained by removing zero entries of a vector
$\vec{f}_w = (f_{w,1},...,f_{w,r})$, with 
$$\sum_{w=1}^{\ell} f_{w,i} = f_i \quad \text{and}
  \quad \sum_{i=1}^r f_{w,i}=m_w $$
Let 
$$N_i= \oplus_{w=1}^{\ell} N_{w,i}\subset V, \quad  1\leq i \leq r.$$
Then
$N_* = (N_1,...,N_r)$ is an $(O_L,O_{\Delta})$-flag in $V$ of type
$(\iota,\vec{f})$.  

It is clear that two $(O_L,O_{\Delta})$-flags $N_*$ and $N_*^{\prime}$
in $V$ are isomorphic if and only if $\iota(e_w)N_*$ and
$\iota(e_w)N_*^{\prime}$ are isomorphic $(O_{L_w},O_{\Delta})$-flags
in $\iota(e_w)V$ for all $w=1,\dots, \ell$. We get the following result.  

\begin{prop}\label{76}
{\rm (1)} One has
\begin{equation}
  \label{eq:75}
  \text{\bf FL}(O_L,O_{\Delta},m,\iota,\vec{f})
= \coprod_{(\vec{f}_1,...,\vec{f}_{\ell})} 
\left(\prod_{w=1}^{\ell} \text{\bf FL}(O_{L_w},O_{\Delta},m_w, \iota_w,
  \vec{f}_w^o)\right),
\end{equation}
where $(\vec{f}_1,...,\vec{f}_{\ell})$ runs through all tuples of
vectors $\vec{f}_w=(f_{w,i})_i \in
  \mathbb{Z}_{\geq 0}^r$ of non-negative integers satisfying the
  condition (\ref{eq:73}), and 
$\vec{f}_w^o$ is the vector obtained by removing zero entries of 
the vector $\vec{f}_w$.

{\rm (2)} Given an $(O_L,O_{\Delta})$-flag $N_*$ in $V$ of type
$(\iota,\vec{f})$, we have 
$$\text{\rm End}_{(O_L,O_{\Delta})}(N_*) = \prod_{w=1}^{\ell}
\text{\rm End}_{(O_{L_w},O_{\Delta})}(\iota(e_w)N_*).$$ 
and
$$\text{\rm Aut}_{(O_L,O_{\Delta})}(N_*) = \prod_{w=1}^{\ell}
\text{\rm Aut}_{(O_{L_w},O_{\Delta})}(\iota(e_w)N_*).$$ 
\end{prop}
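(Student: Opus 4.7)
My plan is to prove (1) by decomposing each lattice $N_i$ in an $(O_L,O_\Delta)$-flag $N_*$ along the orthogonal idempotents $e_w\in O_L$ corresponding to the factors $L_w$, and to prove (2) by applying the same decomposition to morphisms. The crucial observation is that $\iota(e_w)$ commutes with the right $\Delta$-action on $V$, because $\iota(L)\subseteq D=\text{End}_\Delta(V)$ acts $\Delta$-linearly; thus each $\iota(e_w)N_i$ is a right $O_\Delta$-submodule of $\iota(e_w)V$, and the hypothesis $\iota(O_L)N_i\subseteq N_i$ together with $\sum_w\iota(e_w)=1$ gives the orthogonal decomposition $N_i=\bigoplus_{w=1}^{\ell}\iota(e_w)N_i$.

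For part (1), fix an $(O_L,O_\Delta)$-flag $N_*=(N_1,\dots,N_r)$ of type $(\iota,\vec f)$. Applying $\iota(e_w)$ to the chain $N_1\supseteq\cdots\supseteq N_r\supseteq N_{r+1}=N_1\mathfrak{P}_\Delta$ yields a (not necessarily strict) chain of $O_\Delta$-lattices in $\iota(e_w)V$, whose successive quotients have $\mathbb F_\Delta$-dimensions $f_{w,i}$ satisfying (\ref{eq:73}): the first identity because $N_i/N_{i+1}=\bigoplus_w\iota(e_w)N_i/\iota(e_w)N_{i+1}$, and the second by summing filtration lengths on $\iota(e_w)V$, which has $O_\Delta$-rank $m_w$. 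Deleting the indices $i$ where $f_{w,i}=0$ (equivalently $\iota(e_w)N_i=\iota(e_w)N_{i+1}$) produces a strictly decreasing chain that, in view of $\iota(e_w)N_1\mathfrak{P}_\Delta=\iota(e_w)N_{r+1}$, closes up to an $(O_{L_w},O_\Delta)$-flag $\iota(e_w)N_*$ of type $(\iota_w,\vec f_w^o)$ in $\iota(e_w)V$. Conversely, given a tuple of such sub-flags with discrete data compatible with (\ref{eq:73}), one reinserts the appropriate repetitions so that all $\ell$ sub-chains have common length $r$ and sets $N_i:=\bigoplus_w N_{w,i}$; this produces an $(O_L,O_\Delta)$-flag of type $(\iota,\vec f)$, and the two constructions are mutually inverse at the level of isomorphism classes, yielding (\ref{eq:75}).

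For part (2), any $\phi\in\text{End}_{(O_L,O_\Delta)}(N_*)$ commutes with each $\iota(e_w)$ and therefore preserves the decomposition $V=\bigoplus_w\iota(e_w)V$, restricting to endomorphisms $\phi_w\in\text{End}_{(O_{L_w},O_\Delta)}(\iota(e_w)N_*)$. Conversely, any family $(\phi_w)_w$ glues to an endomorphism $\bigoplus_w\phi_w$ of $N_*$ via the direct sum, and the two maps are mutually inverse ring isomorphisms; passing to unit groups gives the corresponding statement for $\text{Aut}$. The main subtlety I expect is in part (1): one must verify carefully that after removing the zero-dimensional steps the remaining chain is a bona fide $(O_{L_w},O_\Delta)$-flag of type $\vec f_w^o$ (in particular that the $\mathfrak{P}_\Delta$-periodicity closes up correctly), and that the isomorphism class of the full flag is determined by the tuple of isomorphism classes of sub-flags together with the discrete data $(\vec f_w)_w$, as required to promote the constructions above into the indexed coproduct decomposition of (\ref{eq:75}).
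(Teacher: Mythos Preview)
Your proposal is correct and follows essentially the same approach as the paper: the paper's argument (given in the discussion immediately preceding the proposition) decomposes each $N_i$ along the idempotents $\iota(e_w)$, verifies condition~(\ref{eq:73}), removes zero-dimensional steps to obtain the sub-flags $\iota(e_w)N_*$ of type $\vec f_w^o$, gives the inverse construction $N_i=\bigoplus_w N_{w,i}$, and notes that isomorphism classes match componentwise; part~(2) is likewise immediate from the idempotent decomposition of endomorphisms. Your write-up is in fact slightly more careful about the $\mathfrak{P}_\Delta$-periodicity and the passage to isomorphism classes than the paper's, but the content is the same.
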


\begin{Rem}
The above proposition tells us that to understand $\text{\bf
  FL}(O_L,O_{\Delta},m,\iota,\vec{f})$, 
it suffices to focus on the case when $L$ is a field.
\end{Rem}

\subsection{Special case: when $L$ is an unramified field
  extension}\label{sec1.3} 

Fix an embedding $\iota: L \hookrightarrow D$ as usual.
In this subsection, we describe explicitly the isomorphism classes in
$\text{\bf FL}(O_{L},O_{\Delta},m,\iota,\vec{f})$ for the case where
$L$ is an unramified field extension over $F$. 
Recall the following basic result: 

\begin{prop}\label{77} 
Let $L$ be a finite extension over $F$.
If we let $$t:= \text{\rm gcd}([L:F], d),$$
where $d^2=[\Delta:F]$, then
the central simple algebra $\Delta \otimes_F L$ over $L$ is isomorphic
to $\text{\rm Mat}_t(\Delta')$ 
where  $\Delta'$ is a central division algebra over $L$ with 
$\text{dim}_{L} \Delta' = (d/t)^2$.
\end{prop}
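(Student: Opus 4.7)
The plan is to deduce this from the classification of central simple algebras over non-archimedean local fields via their Hasse invariants. Since $F$ is non-archimedean local, the invariant map identifies $\mathrm{Br}(F)$ with $\Q/\Z$, and for any finite extension $L/F$, restriction of scalars $\mathrm{Br}(F)\to\mathrm{Br}(L)$ corresponds to multiplication by $[L:F]$ on invariants.

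First, I would write $\mathrm{inv}_F(\Delta)=\kappa/d\bmod\Z$ with $\gcd(\kappa,d)=1$, which is possible precisely because $\Delta$ is a division algebra of degree $d$ (so the denominator of the invariant in lowest terms equals the index). Then I would compute
\[
  \mathrm{inv}_L(\Delta\otimes_F L) \;=\; [L:F]\cdot\frac{\kappa}{d} \pmod{\Z},
\]
and observe that the denominator of the right hand side in lowest terms is
\[
  \frac{d}{\gcd([L:F]\kappa,\,d)} \;=\; \frac{d}{\gcd([L:F],\,d)} \;=\; \frac{d}{t},
\]
using $\gcd(\kappa,d)=1$.

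Next, I would invoke the classification of central simple algebras over $L$: the Schur index of $\Delta\otimes_F L$ equals the denominator of its invariant in lowest terms, namely $d/t$. Thus there is a central division algebra $\Delta'$ over $L$ of degree $d/t$, i.e.\ $\dim_L\Delta'=(d/t)^2$, and an isomorphism $\Delta\otimes_F L \cong \mathrm{Mat}_{t'}(\Delta')$ for a unique positive integer $t'$. Finally, comparing $L$-dimensions gives
\[
  d^2 \;=\; \dim_L(\Delta\otimes_F L) \;=\; t'^2\cdot (d/t)^2,
\]
forcing $t'=t$, which completes the proof.

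There is no real obstacle: every step is a direct application of local class field theory and the Artin–Wedderburn/Brauer classification. The only point worth checking carefully is the invariant scaling formula $\mathrm{inv}_L=[L:F]\cdot\mathrm{inv}_F$ under base change, which is standard but is the one input that genuinely uses $F$ being a non-archimedean local field; everything else is a dimension and gcd calculation.
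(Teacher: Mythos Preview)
Your proof is correct and complete. The paper does not actually prove this proposition; it is introduced with the phrase ``Recall the following basic result'' and stated without argument, with an implicit reference to Reiner~\cite[(31.9)]{Rei} appearing in the surrounding discussion. Your approach via Hasse invariants and the restriction formula $\mathrm{inv}_L = [L:F]\cdot\mathrm{inv}_F$ is exactly the standard one and is essentially what Reiner's (31.9) encodes, so there is no discrepancy to discuss.
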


From now on, $L$ is an unramified field extension of $K$. 
Choose an unramified maximal subfield $W$ in $\Delta$.
Then $[W:F] = d$.
Let $\pi$ be a uniformizer of $F$.
There exists  an element $u$ in $\Delta$ and a positive integer
$\kappa$ with $\text{gcd}(\kappa,d) = 1$ such that  
$$u^{d} = \pi^{\kappa} \text{ and }
u \alpha = \text{Frob}_{W/F}(\alpha) u, \text{ } \forall\, \alpha \in
W.$$ 
Here $\text{Frob}_{W/F}$ is the Frobenius automorphism of $W$ over
$F$. 
Therefore $\Delta$ is isomorphic to the cyclic algebra
$(W/F,\text{Frob}_{W/F}, \pi^{\kappa})$ 
(cf.\ \cite{Rei} \S 30 and \S 31). 
The integer $\kappa \bmod d$ is independent of the choices of $W$ and 
$u$, 
and
$$\frac{\kappa}{d} \bmod \mathbb{Z} \in \mathbb{Q}/\mathbb{Z}$$
is called the \it Hasse invariant of \rm $D = \text{Mat}_m(\Delta)$.

It is known that $\Delta'$ is isomorphic to the cyclic algebra
$$(WL/L,\text{Frob}_{WL/L}, \pi^{\kappa'})$$ 
where $\kappa' \equiv \kappa \cdot ([L:F]/t) \bmod (d/t)$ (cf.\
\cite{Rei} (31.9)). 

Let $\gamma$ (resp.\ $\gamma'$) be a positive integer such that 
$$\gamma \cdot \kappa \equiv 1 \bmod d, \quad \text{ (resp.\
$\gamma' \cdot \kappa' \equiv 1 \bmod (d/t)$)}.$$
There exists an element $\Pi$ (resp.\ $\Pi'$) in $\Delta$ 
(resp.\ $\Delta'$) such that $\Pi^d = \pi$ 
(resp.\ $(\Pi')^{d/t} = \pi$) and 
$$\Pi \alpha = \text{Frob}_{W/F}^{\gamma} (\alpha) \Pi, \quad \text{ }
\forall\, \alpha \in W$$ 
$$\text{(resp.\ $\Pi' \alpha' = 
\text{Frob}_{WL/L}^{\gamma'} (\alpha')
  \Pi', \quad \forall\, \alpha' \in WL$)}.$$ 
Obviously, 
$$\Pi' \alpha = \text{Frob}_{W/F}^{\gamma \cdot t} (\alpha) \Pi'
\quad \forall\, \alpha \in W.$$ 
Now, the isomorphism between $\Delta \otimes_{F} L$ and
$\text{Mat}_{t}(\Delta')$ can be described by the following: 
$$\begin{tabular}{ccll}
$\Pi \otimes 1$ & $\longmapsto$ & 
$\begin{pmatrix} 0 & 1 &  & \\  & 
  \ddots & \ddots & \\  & & 0 & 1 \\ \Pi' & & & 0 \end{pmatrix}$, 
& \\  
$\alpha \otimes 1$ & $\longmapsto$ & 
$\begin{pmatrix} \alpha & & & \\
  & \text{Frob}_{W/F}^{\gamma}(\alpha) & & \\ 
 & & \ddots & \\ & & & \text{Frob}_{W/F}^{\gamma (t-1)}(\alpha)
\end{pmatrix}$ 
& $\forall\, \alpha \in W$,\\ 
$1 \otimes \beta$ & $\longmapsto$ & $\begin{pmatrix} \beta & & \\ &
  \ddots & \\ & & \beta \end{pmatrix}$ & $\forall\, \beta \in L$. 
\end{tabular}$$

In particular, denote by $O_{\Delta'}$ the maximal compact subring in 
$\Delta'$ and  
$\mathfrak{P}_{\Delta'} := \Pi' O_{\Delta'}$, the maximal two-sided
ideal in $O_{\Delta'}$. 
Then
$$O_{\Delta} \otimes_{O_F} O_{L} \cong
\begin{pmatrix} O_{\Delta'} & O_{\Delta'} & \cdots & O_{\Delta'} \\
\mathfrak{P}_{\Delta'} & O_{\Delta'} & \cdots & O_{\Delta'} \\
\vdots & \ddots & \ddots & \vdots \\
\mathfrak{P}_{\Delta'} & \cdots & \mathfrak{P}_{\Delta}' & O_{\Delta'}
\end{pmatrix} \subset \text{Mat}_{t}(\Delta').$$ 
We conclude that
\begin{lem}\label{78}
Suppose $L$ is unramified over $F$. Then $O_{\Delta} \otimes_{O_F}
O_L$ is isomorphic to an Iwahori order in $\text{\rm
  Mat}_t(\Delta')$. 
\end{lem}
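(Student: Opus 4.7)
The plan is to verify the explicit matrix description of $O_\Delta\otimes_{O_F}O_L$ inside $\Mat_t(\Delta')$ appearing in the display immediately before the lemma, and then to recognise the resulting ring as the (standard) Iwahori order in $\Mat_t(\Delta')$, i.e.\ the hereditary order of period $t$ with invariant $(1,1,\dots,1)$.

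First I would write every element of $O_\Delta$ uniquely in the form $\sum_{k=0}^{d-1} a_k\Pi^k$ with $a_k\in O_W$, so that $O_\Delta\otimes_{O_F}O_L$ is spanned as an $O_F$-module by the elements $(a\Pi^k)\otimes\beta$ with $a\in O_W$, $\beta\in O_L$, and $0\le k<d$. Under the explicit isomorphism $\phi\colon\Delta\otimes_F L\isoto\Mat_t(\Delta')$ recalled just before the lemma, one has $\phi(\Pi\otimes 1)=P$ (the cyclic matrix with $1$'s on the superdiagonal and $\Pi'$ in the bottom-left corner), $\phi(a\otimes 1)=\diag(a,\sigma^\gamma(a),\dots,\sigma^{\gamma(t-1)}(a))$ where $\sigma=\text{Frob}_{W/F}$, and $\phi(1\otimes\beta)=\beta I$. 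Writing $k=qt+r$ with $0\le q<d/t$ and $0\le r<t$, a direct computation gives $P^k=(\Pi')^q P^r$; moreover $P^r$ has a $1$ in each position $(i,i+r)$ with $i+r\le t$ and a $\Pi'$ in each position $(i,i+r-t)$ with $i+r>t$. Consequently $\phi\bigl((a\Pi^k)\otimes\beta\bigr)=\phi(a\otimes 1)\cdot P^k\cdot\phi(1\otimes\beta)$ is supported on the single \lq\lq generalised diagonal\rq\rq\ $\{(i,j):j-i\equiv r\pmod t\}$, with $(i,j)$-entry equal to $\sigma^{\gamma(i-1)}(a)\,\beta\,(\Pi')^q$ when $j\ge i$ and to $\sigma^{\gamma(i-1)}(a)\,\beta\,(\Pi')^{q+1}$ when $j<i$ (the extra $\Pi'$ coming from the wrap-around).

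Because $L/F$ is unramified, both $W/F$ and $L/F$ are unramified, so $WL/F$ is unramified and $O_W\cdot O_L=O_{WL}$ (by Nakayama, since the residue field of the product already realises that of $WL$). Combined with the decomposition $O_{\Delta'}=\bigoplus_{q=0}^{d/t-1}O_{WL}\,(\Pi')^q$ and the fact that every $\sigma^{\gamma(i-1)}$ is an automorphism of $O_W$, letting $(a,\beta,k)$ vary therefore fills out all of $O_{\Delta'}$ in each entry on or above the main diagonal and all of $\mathfrak{P}_{\Delta'}=\Pi'O_{\Delta'}$ in each entry strictly below. This recovers exactly the matrix ring displayed before the lemma, which by the definition in Section~\ref{sec:22} is the hereditary order $\Mat_t\bigl((1,1,\dots,1),O_{\Delta'}\bigr)$, i.e.\ an Iwahori order in $\Mat_t(\Delta')$.

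The main obstacle is the entry-by-entry bookkeeping in the second paragraph: one must check that the combination of Frobenius twists $\sigma^{\gamma(i-1)}$ with multiplication by $(\Pi')^q$ and by scalars from $O_L$ genuinely sweeps out the full entry rings $O_{\Delta'}$ (resp.\ $\mathfrak{P}_{\Delta'}$). The two ingredients that reduce this to a routine check are $O_W\cdot O_L=O_{WL}$ (supplied by unramifiedness of $L/F$) and the cyclic-algebra normal form of a maximal order in $\Delta'$.
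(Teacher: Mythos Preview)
Your approach coincides with the paper's: the paper simply writes down the explicit cyclic-algebra isomorphism $\phi$ and asserts, without further argument, that $O_\Delta\otimes_{O_F}O_L$ maps onto the displayed Iwahori order; you are supplying the verification. Your computation of $\phi((a\Pi^k)\otimes\beta)$ is correct and gives the containment in the Iwahori order.

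There is, however, a small gap in the reverse containment. Knowing that each individual matrix entry ranges over $O_{\Delta'}$ (resp.\ $\mathfrak{P}_{\Delta'}$) as $(a,\beta,k)$ vary is not enough to conclude that the image is the full Iwahori order, because the entries along a fixed generalised diagonal are coupled: they all involve Frobenius twists $\sigma^{\gamma(i-1)}(a)$ of the \emph{same} $a$. Your stated ingredient $O_W\cdot O_L=O_{WL}$ (a statement inside the single field $WL$) handles one entry at a time but does not decouple them. What is actually needed is that the map
\[
O_W\otimes_{O_F}O_L\longrightarrow\prod_{i=1}^{t}O_{WL},\qquad a\otimes\beta\longmapsto\bigl(\sigma^{\gamma(i-1)}(a)\,\beta\bigr)_{i=1}^{t},
\]
is an isomorphism. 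This is the integral refinement of the splitting $W\otimes_F L\cong(WL)^t$, and it holds precisely because both $W/F$ and $L/F$ are unramified (so $O_W\otimes_{O_F}O_L$ is already the maximal order in the \'etale $F$-algebra $W\otimes_F L$). Granting this, $\phi(O_W\otimes_{O_F}O_L)$ contains \emph{all} diagonal matrices with arbitrary independent entries in $O_{WL}$; right-multiplying by $P^{qt}=(\Pi')^qI$ and summing over $q$ gives all diagonals with entries in $O_{\Delta'}$, and then right-multiplying by $P^r$ for $0\le r<t$ fills out the remaining generalised diagonals, yielding the full Iwahori order.
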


Recall that $\iota: L \hookrightarrow D = \text{End}_{\Delta}(V)$ 
is a fixed embedding. 
Then $V$ can be viewed as an $(L,\Delta)$-bimodule via $\iota$.
It is natural to think $V$ as a right $\Delta \otimes_F L$-module.
For $1\leq i,j \leq t$, let $E_{i,j}$ be the $(i,j)$-th elementary
matrix in $\text{Mat}_{t}(\Delta') \cong \Delta \otimes_{F} L$ 
(i.e.\ whose $(i,j)$-entry is one and other entries are zero). 
Let 
$$V^{(i)}:= V E_{i,i} \quad \text{for } 1\leq i \leq t.$$
The multiplication by $E_{i,j}$ from the right gives rise to an
isomorphism of free right $\Delta'$-modules 
$$ \begin{tabular}{cccl}
$\varphi_{j,i}:$ & $V^{(i)}$ & $\longrightarrow$ & $V^{(j)}$\\
  & $xE_{i,i}$ & $\longmapsto$ & $xE_{i,j} = \big((xE_{i,i}) \cdot
  E_{i,j}\big) \cdot E_{j,j}$. 
\end{tabular}$$
Therefore 
$$m':=\text{rank}_{\Delta'}V^{(i)} = \frac{1}{t} \cdot
\text{rank}_{\Delta'} V 
    = \frac{m[\Delta:F]}{t [\Delta':F]}=\frac{m \cdot t}{[L:F]}$$
as $t^2[\Delta':F]=[\Delta\otimes_F L:F]=[\Delta:F][L:F]$. 

\begin{lem}\label{79}
For $1\leq i \leq t$, we have
$$\begin{tabular}{ccc}
$\text{\rm End}_{\Delta \otimes_{F} L} (V)$ & $\cong$ & $\text{\rm
  End}_{\Delta'}(V^{(i)})$ \\ 
$g$ & $\mapsto$ & $g \mid_{V^{(i)}}$.
\end{tabular}$$
\end{lem}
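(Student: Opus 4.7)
The plan is to exploit the complete system of orthogonal idempotents $\{E_{i,i}\}_{i=1}^t$ in $\Delta\otimes_F L\cong \Mat_t(\Delta')$, which is essentially Morita equivalence between $\Mat_t(\Delta')$ and $\Delta'$ applied to the right module $V$. The key observation is that the maps $\varphi_{j,i}\colon V^{(i)}\to V^{(j)}$ introduced just before the lemma are nothing other than right multiplication by the elementary matrix $E_{i,j}$, and in particular $\varphi_{i,i}=\id_{V^{(i)}}$ and $\varphi_{k,j}\circ\varphi_{j,i}=\varphi_{k,i}$ for all indices. Also, the corner ring $E_{i,i}(\Delta\otimes_F L)E_{i,i}$ acts on $V^{(i)}$ as $\Delta'$ (this is what $V^{(i)}$ being a right $\Delta'$-module means).

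First I would check that the restriction map is well defined. Given $g\in\End_{\Delta\otimes_F L}(V)$, the fact that $g$ commutes with right multiplication by $E_{i,i}$ forces $g(V^{(i)})\subseteq V^{(i)}$, and since $g$ also commutes with the corner ring $E_{i,i}(\Delta\otimes_F L)E_{i,i}\cong\Delta'$, the restriction $g|_{V^{(i)}}$ lies in $\End_{\Delta'}(V^{(i)})$.

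Next, I would exploit commutation with $E_{i,j}$. Writing $V=\bigoplus_{k=1}^t V^{(k)}$, the identity $g(xE_{i,j})=g(x)E_{i,j}$ for $x\in V^{(i)}$ translates directly into
\[
 g|_{V^{(j)}} \;=\; \varphi_{j,i}\circ g|_{V^{(i)}}\circ\varphi_{i,j},
\]
because $\varphi_{j,i}=(\,\cdot\,)E_{i,j}$ and $\varphi_{i,j}$ is its inverse. This identity immediately yields injectivity: if $g|_{V^{(i)}}=0$ then $g|_{V^{(j)}}=0$ for every $j$, hence $g=0$ on $V=\bigoplus_j V^{(j)}$.

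For surjectivity, given $h\in\End_{\Delta'}(V^{(i)})$, I would define $\tilde h$ on $V$ by setting $\tilde h|_{V^{(j)}}:=\varphi_{j,i}\circ h\circ\varphi_{i,j}$ and extending by the direct sum decomposition; this gives $\tilde h|_{V^{(i)}}=h$ since $\varphi_{i,i}=\id$. To verify $\tilde h\in\End_{\Delta\otimes_F L}(V)$, it suffices to check that $\tilde h$ commutes with right multiplication by every elementary matrix $E_{k,l}$ and by every element of $\Delta'\cong E_{i,i}(\Delta\otimes_F L)E_{i,i}$, since these together generate $\Mat_t(\Delta')$ as a ring. Commutation with $E_{k,l}$ boils down to $\varphi_{l,i}\circ\varphi_{i,k}=\varphi_{l,k}$ (together with the defining identity of $\tilde h|_{V^{(j)}}$), while commutation with $\Delta'$ at the $i$-th slot is the given hypothesis on $h$, propagated to the other slots by the very definition of $\tilde h|_{V^{(j)}}$. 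There is no serious obstacle here—the whole argument is Morita-theoretic bookkeeping—the only point that merits care is tracking the composition rule $\varphi_{k,j}\circ\varphi_{j,i}=\varphi_{k,i}$, which follows at once from $E_{j,k}E_{i,j}=E_{i,k}$.
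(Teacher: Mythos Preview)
Your proposal is correct and follows essentially the same approach as the paper: both construct the inverse map by sending $h\in\End_{\Delta'}(V^{(i)})$ to $\sum_{j}\varphi_{j,i}\circ h\circ\varphi_{i,j}$, and the paper's computation $g(\alpha)=\sum_j(\varphi_{j,i}\circ g|_{V^{(i)}}\circ\varphi_{i,j})(\alpha)$ is exactly your injectivity argument. One small slip: the matrix identity justifying $\varphi_{k,j}\circ\varphi_{j,i}=\varphi_{k,i}$ should read $E_{i,j}E_{j,k}=E_{i,k}$ (right multiplication composes in this order), not $E_{j,k}E_{i,j}=E_{i,k}$.
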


\begin{proof}
For a given $h \in \text{End}_{\Delta'}(V^{(i)})$,
consider the following map 
$$\Phi_h:= \sum_{j=1}^{t} \varphi_{j,i} \circ h 
\circ \varphi_{i,j} :
\bigoplus_{j=1}^{t} V^{(j)} \longrightarrow \bigoplus_{j=1}^{t}
V^{(j)}.$$ 
It is clear that $\Phi_h \in \text{\rm End}_{\Delta \otimes_{F} L}
(V)$ and $\Phi_h |_{V^{(i)}} = h$. 

On the other hand, let $g \in \text{\rm End}_{\Delta \otimes_{F} L}
(V)$. 
For any $\alpha$ in $V$,
$$g(\alpha) = g( \sum_{j=1}^{t} \alpha E_{j,j})
= \sum_{j=1}^{t} \big(g(\alpha E_{j,i})\big) E_{i,j} = 
\sum_{j=1}^{t} (\varphi_{j,i} \circ (g \mid_{V^{(i)}}) 
\circ \varphi_{i,j})(\alpha)$$ 
and the result follows. \qed
\end{proof}

For $1 \leq i < t$, one has 
$$\Pi^i \cdot E_{t,t} = E_{t-i,t} \text{ and } 
\Pi^{t} \cdot E_{t,t} = 
E_{t,t} \cdot \Pi'.$$ 
Let $N_{*} = (N_{1},...,N_{r})$ be an 
$(O_{L},O_{\Delta})$-flag in $V$ 
of type $\vec{f} = (f_{1},...,f_{r})$. 
Then we can view $N_*$ as a flag of right
$O_{\Delta}\otimes_{O_F}O_L$-modules (in $V$). 
Set $$N_{i,j}:= N_{i} \cdot E_{t+1-j,t} \subset V^{(t)}\quad \text{for\ }
1\leq i \leq r \text{ and } 1\leq j \leq t.$$ 
Then multiplying $E_{t,t}$ (from the right) on the chain
$$ N_{1} \supsetneq N_{2} \supsetneq \cdots \supsetneq N_{r}
\supsetneq N_{r+1} = N_{1} \cdot \Pi,$$ 
we get a longer chain of right $O_{\Delta'}$-lattices in $V^{(t)}$: 
\begin{equation}
  \label{eq:76}
  \begin{tabular}{llllllll}
 & $N_{1,1}$ & $\supseteq$ & $N_{2,1}$ & $\supseteq$ & $\cdots$ &
 $\supseteq$ & $N_{r,1}$ \\ 
$\supseteq$ & $N_{1,2}$ & $\supseteq$ & $N_{2,2}$ & $\supseteq$ &
$\cdots$ & $\supseteq$ & $N_{r,2}$ \\ 
$\supseteq$ & $\cdots$ & & $\cdots$ & & $\cdots$ &  & $\cdots$\\
$\supseteq$ & $N_{1,t}$ & $\supseteq$ & $N_{2,t}$ & $\supseteq$ &
$\cdots$ & $\supseteq$ & $N_{r,t}$ \\ 
$\supseteq$ & $N_{1,1} \cdot \Pi'$. &&&&&&
\end{tabular}
\end{equation}
Moreover, 
$$N_{i} = \bigoplus\limits_{j=1}\limits^{t}
N_{i,j}E_{t,t+1-j} \quad \text{for } 1\leq i \leq r.$$ 
Recall that $\mathbb{F}_{\Delta} = O_{\Delta}/\mathfrak{P}_{\Delta}$
and we let $\mathbb{F}_{\Delta'} =
O_{\Delta'}/\mathfrak{P}_{\Delta'}$. 
Let 
\begin{equation}
  \label{eq:77}
  f_{i,j}:= 
\begin{cases}
\text{dim}_{\mathbb{F}_{\Delta'}}(N_{i,j}/N_{i+1,j}) 
  & \text{ for  $1\leq i <r$},\\ 
\text{dim}_{\mathbb{F}_{\Delta'}}(N_{r,j}/N_{1,j+1}) 
  & \text{ for $i = r$ and $1\leq j < t$},\\ 
\text{dim}_{\mathbb{F}_{\Delta'}}\big(N_{r,t}/(N_{1,1} \Pi')\big) 
  & \text{ for $i = r$ and $j=t$}.\end{cases}
\end{equation}
Then for $1\leq i \leq r$,
$$ [\mathbb{F}_{\Delta'} : \mathbb{F}_{\Delta}]\cdot  
\sum_{j=1}^{t} f_{i,j} 
 = \frac{[L:F]}{t} \cdot \sum_{j=1}^{t} f_{i,j} = f_{i}.$$
This tells us that $[L:F]/t$ divides $f_{i,j}$ for all $i$.  

We fix an order of the index set $\{(i,j) \}_{1\leq i \leq r, 1\leq j
  \leq t}$: 
$$(i,j) < (i',j') \text{ if } 
\begin{cases} j < j' & \text{} \\ {j =
    j' \text{ and } i < i'.} & \text{ } \end{cases}$$ 
Consider the vector $$\vec{f}_* = (f_{i,j})_{1\leq i \leq r, 1\leq j
  \leq t}  
= (f_{1,1}, ...,
f_{r,1},f_{1,2},...,f_{r,2},...,f_{1,t},...,f_{r,t}).$$ 
Let $\vec{f}_*^{o}$ be the vector obtained by removing the zero
entries of the vector $\vec{f}_*$. 
Picking the \lq\lq gap\rq\rq\ of the chain 
$(N_{i,j})_{1\leq i \leq r,
  1\leq j \leq t}$ as (\ref{eq:74}),
we obtain an
$O_{\Delta'}$-flag in $V^{(t)}$ of type $\vec{f}_{*}^{o}$.
It is clear that 
$$\text{End}_{E(O_{L},O_{\Delta})}(N_{*}) \cong
\text{Mat}_{m'}(\vec{f}_*^{o},O_{\Delta'}) \subset
\text{Mat}_{m'}(\Delta').$$ 

Conversely, suppose $[L:F]/t$ divides $f_{i}$ for all $i$. 
Then for any such a chain of $O_{\Delta'}$-lattices
$\big(N_{i,j}\big)_{1 \leq i \leq r, 1\leq j \leq t}$ in $V^{(t)}$, 
we can rebuild an $(O_{L},O_{\Delta})$-flag $N_* = (N_1,...,N_r)$ in
$V$ of type $(\iota,\vec{f})$ by setting 
$$N_i = \bigoplus_{j=1}^t N_{i,j}\cdot E_{t,t+1-j} \subset
\bigoplus_{j=1}^t V^{(t+1-j)} = V.$$ 
Two $(O_{L},O_{\Delta})$-flags $N_{*}$ and $N_{*}'$ are isomorphic if
and only if 
$f_{i,j} = f_{i,j}'$ for $1\leq i \leq r$ and $1\leq j \leq t$.
We conclude the above discussion in the following result: 

\begin{thm}\label{710}
Suppose an embedding $\iota:L\hookrightarrow D$ is given, 
where $L$ is an unramified extension over $F$.

\text{\rm (1)} The set $\text{\bf FL}(O_{L},O_{\Delta}, m, \iota,
\vec{f})$ of isomorphism classes 
of $(O_{L},O_{\Delta})$-flags in $V$ of type $(\iota,\vec{f} =
(f_{1},...,f_{r}))$ can be parametrized by the vectors 
$\vec{f}_*=\big(f_{i,j}\big)_{1\leq i \leq r, 1 \leq j \leq t}$
of non-negative integers $f_{i,j}$ satisfying
\begin{equation}
  \label{eq:78}
  \frac{[L:F]}{t} \cdot
\sum_{j=1}^{t} f_{i,j} = f_{i} \quad 
\text{ for all } 1\leq i \leq r.
\end{equation}

\text{\rm (2)} Let $N_{*}$ be an $(O_{L},O_{\Delta})$-flag in $V$ 
of type $\vec{f}$. 
Let $\vec{f}_* = \big(f_{i,j}\big)_{1\leq i \leq r,1\leq j \leq t}$ 
be its corresponding vector. 
Then
$$\text{\rm End}_{(O_{L},O_{\Delta})}(N_{*}) \cong \text{\rm
  Mat}_{m'}(\vec{f}_*^o,O_{\Delta'}) \subset \text{\rm
  Mat}_{m'}(\Delta')$$ 
where the vector $\vec{f}_*^o$ is obtained by removing 
zero entries of $\vec{f}_*$. \\
\end{thm}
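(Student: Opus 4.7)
The plan is to leverage the extensive setup already carried out in Subsection~\ref{sec1.3} and make the correspondence rigorous. The construction for the map from flags to vectors is essentially given in the discussion preceding the theorem; what remains is to verify that it is well-defined on isomorphism classes, construct its inverse, and identify the endomorphism ring.

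First I would establish part (1) as follows. Using the explicit isomorphism $\Delta\otimes_F L \cong \Mat_t(\Delta')$ described via the elements $\Pi, \Pi'$ and $E_{i,j}$, I view $V$ as a right $\Mat_t(\Delta')$-module and decompose $V=\bigoplus_{j=1}^t V^{(j)}$ with each $V^{(j)}=VE_{j,j}$ free of rank $m'=mt/[L{:}F]$ over $\Delta'$. Given an $(O_L,O_\Delta)$-flag $N_*=(N_1,\dots,N_r)$, the lattices $N_{i,j}:=N_i\cdot E_{t+1-j,\,t}\subset V^{(t)}$ satisfy the chain (\ref{eq:76}), because the identities $\Pi^i E_{t,t}=E_{t-i,t}$ and $\Pi^t E_{t,t}=E_{t,t}\Pi'$ translate the descending chain $N_1\supsetneq\cdots\supsetneq N_r\supsetneq N_1\Pi$ into a single descending chain of $O_{\Delta'}$-lattices in $V^{(t)}$. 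Setting $f_{i,j}$ as in (\ref{eq:77}), the identity $N_i=\bigoplus_{j=1}^t N_{i,j} E_{t,t+1-j}$ together with $[\mathbb{F}_{\Delta'}:\mathbb{F}_\Delta]=[L:F]/t$ yields the constraint (\ref{eq:78}). I would then observe that this assignment depends only on the isomorphism class of $N_*$, since an isomorphism $N_*\cong N_*'$ preserves the $\Delta\otimes_F L$-structure and hence respects the right-multiplication by $E_{t+1-j,t}$.

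Next I would construct the inverse. Given a vector $\vec f_*=(f_{i,j})$ satisfying (\ref{eq:78}), a standard construction produces a chain $(N_{i,j})_{i,j}$ of $O_{\Delta'}$-lattices in $V^{(t)}$ with the prescribed jumps (here one fixes a basis of $V^{(t)}$ and uses diagonal scalings by $\Pi'$, mirroring the model $M_*(\vec f)$ in Section \ref{sec1.1}); setting $N_i=\bigoplus_j N_{i,j} E_{t,t+1-j}$ yields an $(O_L,O_\Delta)$-flag of type $(\iota,\vec f)$ whose associated vector is $\vec f_*$. Two such flags built from the same $\vec f_*$ are isomorphic because any two $O_{\Delta'}$-lattice chains in $V^{(t)}$ with identical jump pattern are conjugate under $\GL_{m'}(\Delta')$, and such a conjugation extends uniquely to an automorphism of $V$ commuting with $\Mat_t(\Delta')=\Delta\otimes_F L$, which in turn commutes with $\iota(O_L)$. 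Combined with the dimension compatibility just proved, this gives the claimed bijection.

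For part (2), I would use Lemma~\ref{79}: an element of $\End_{\Delta\otimes_F L}(V)$ is determined by its restriction to $V^{(t)}$, which yields an isomorphism $C_\iota(L)\cong \End_{\Delta'}(V^{(t)})$. Under this isomorphism, $g\in C_\iota(L)$ preserves the flag $N_*$ precisely when $g|_{V^{(t)}}$ preserves the chain $(N_{i,j})$ in $V^{(t)}$. Removing the entries with $f_{i,j}=0$ collapses the chain to one with strictly decreasing terms (a genuine $O_{\Delta'}$-flag in $V^{(t)}$ of type $\vec f_*^o$), and its endomorphism ring is by definition $\Mat_{m'}(\vec f_*^o,O_{\Delta'})$. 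The analogous statement for automorphism groups follows at once. The main technical obstacle will be bookkeeping: ensuring the chosen ordering of the index set $\{(i,j)\}$ is compatible with the descending chain (\ref{eq:76}) so that the resulting hereditary order really has invariant $\vec f_*^o$ and not some cyclic permutation; once this is checked carefully the rest is formal.
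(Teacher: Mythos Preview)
Your proposal is correct and follows essentially the same approach as the paper: the paper's proof of Theorem~\ref{710} is precisely the discussion in Subsection~\ref{sec1.3} preceding the theorem statement, which constructs the chain $(N_{i,j})$ via right multiplication by $E_{t+1-j,t}$, verifies the constraint~(\ref{eq:78}), rebuilds the flag from a given chain, and identifies the endomorphism ring using Lemma~\ref{79}. Your added remarks on well-definedness on isomorphism classes and on the $\GL_{m'}(\Delta')$-conjugacy of lattice chains with the same jump pattern make explicit what the paper leaves implicit in the line ``Two $(O_{L},O_{\Delta})$-flags $N_{*}$ and $N_{*}'$ are isomorphic if and only if $f_{i,j} = f_{i,j}'$ for $1\leq i \leq r$ and $1\leq j \leq t$.''
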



\begin{thank} 
  The authors would like to thank E.-U.~Gekeler and 
  Jing Yu for their steady interest and
  encouragements. The paper was revised while the second named author
  visited the IEM, Universit\"at Duisburg-Essen. He wishes to thank
  the institution for kind hospitality and good working conditions. 
  The first named author was partially supported by 
  the grant NSC 100-2811-M-007-046.
  The second named author was partially supported by the
  grants NSC 100-2628-M-001-006-MY4 and AS-99-CDA-M01. 
\end{thank}













\end{document}